\renewcommand{\mathbb}{\mathds}
\theoremstyle{plain}
\newtheorem{theorem}[equation]{Theorem}
\newtheorem{proposition}[equation]{Proposition}
\newtheorem{lemma}[equation]{Lemma}
\newtheorem{corollary}[equation]{Corollary}
\theoremstyle{definition}
\newtheorem{remark}[equation]{Remark}
\newtheorem{example}[equation]{Example}
\newtheorem{examples}[equation]{Examples}
\newtheorem{definition}[equation]{Definition}
\newtheorem{notation}[equation]{Notation}
\newtheorem{construction}[equation]{Construction}
\newtheorem{problem}[equation]{Problem}
\newtheorem{main-problem}[equation]{Problem}
\newtheorem{subsec}[equation]{}
\numberwithin{equation}{section}
\newcommand{\R}{{\mathbb R}}
\newcommand{\C}{{\mathbb C}}
\newcommand{\A}{{\mathbb A}}
\newcommand{\G}{{\mathbb G}}
\newcommand{\Q}{{\mathbb Q}}
\newcommand{\Z}{{\mathbb Z}}
\newcommand{\real}{\bf}
\renewcommand{\AA}{{\real A}}
\newcommand{\BB}{{\real B}}
\newcommand{\CC}{{\real C}}
\newcommand{\DD}{{\real D}}
\newcommand{\EE}{{\real E}}
\newcommand{\FF}{{\real F}}
\newcommand{\GG}{{\real G}}
\newcommand{\HH}{{\real H}}
\newcommand{\NN}{{\real N}}
\newcommand{\WW}{{\real W}}
\renewcommand{\SS}{{\real S}}
\newcommand{\TT}{{\real T}}
\newcommand{\UU}{{\real U}}
\newcommand{\YY}{{\real Y}}
\newcommand{\X}{{{X}}}
\newcommand{\gG}{{\scriptscriptstyle \real G}}
\newcommand{\yY}{{\scriptscriptstyle \real Y}}
\newcommand{\rR}{{\scriptscriptstyle \mathbb R}}
\newcommand{\cCC}{{\scriptscriptstyle \mathbf C}}
\newcommand{\cZ}{{\mathcal{Z}}}
\newcommand{\cN}{{\mathcal{N}}}
\newcommand{\bs}{\boldsymbol}
\newcommand{\ag}{{\mathfrak{a}}}
\newcommand{\bg}{{\mathfrak{b}}}
\newcommand{\hg}{{\mathfrak{h}}}
\newcommand{\hR}{\bs{\hg}}
\newcommand{\ig}{{\mathfrak{i}}}
\newcommand{\kg}{\bs{\mathfrak{k}}}
\renewcommand{\ng}{{\mathfrak{n}}}
\newcommand{\tg}{{\mathfrak{t}}}
\newcommand{\tgR}{\bs{\tg}}
\newcommand{\pg}{\bs{\mathfrak{p}}}
\newcommand{\gl}{{\mathfrak{gl}}}
\newcommand{\rg}{{\mathfrak {r}}}
\newcommand{\sg}{{\mathfrak{s}}}
\newcommand{\sgR}{\bs{\sg}}
\newcommand{\zg}{{\mathfrak{z}}}
\newcommand{\zgR}{\bs{\zg}}
\newcommand{\cg}{{\mathfrak{c}}}
\newcommand{\cgR}{\bs{\cg}}
\newcommand{\g}{{\mathfrak{g}}}
\newcommand{\gR}{\bs{\g}}
\newcommand{\into}{\hookrightarrow}
\newcommand{\wh}{\widehat}
\DeclareMathOperator{\Inn}{Inn}
\DeclareMathOperator{\Aut}{Aut}
\DeclareMathOperator{\SAut}{SAut}
\DeclareMathOperator{\Out}{Out}
\DeclareMathOperator{\SOut}{SOut}
\DeclareMathOperator{\inn}{inn}
\DeclareMathOperator{\Lie}{Lie}
\newcommand{\ad}{{\rm ad\hs}}
\newcommand{\Hom}{{\rm Hom}}
\newcommand{\Gal}{{\rm Gal}}
\newcommand{\GL}{{\rm GL}}
\newcommand{\SO}{{\rm SO}}
\newcommand{\Ad}{{\rm Ad}}
\newcommand{\diag}{{\rm diag}}
\newcommand{\ov}{\overline}
\newcommand{\ssc}{{\rm sc}}
\newcommand{\isoto}{\overset{\sim}{\to}}
\newcommand{\onto}{\twoheadrightarrow}
\newcommand{\labelto}[1]{\xrightarrow{\makebox[1.5em]{\scriptsize ${#1}$}}}
\newcommand{\hs}{\kern 0.8pt}
\newcommand{\hssh}{\kern 1.2pt}
\newcommand{\hshs}{\kern 1.6pt}
\newcommand{\hssss}{\kern 2.0pt}
\newcommand{\hm}{\kern -0.8pt}
\newcommand{\hmm}{\kern -1.2pt}
\renewcommand{\H}{{\mathbb H}}
\renewcommand{\hbar}{{\bar h}}
\newcommand{\sbar}{{\bar s}}
\newcommand{\upgam}{\hs^\gamma\hm}
\newcommand{\gam}{{\gamma}}
\newcommand{\id}{{\rm id}}
\newcommand{\ff}{{\rm f}}
\newcommand{\uu}{{\rm u}}
\newcommand{\red}{{\rm red}}
\newcommand{\im}{{\rm im\,}}
\newcommand{\SL}{{\rm SL}}
\newcommand{\GmR}{{\G_{{\rm m},\R}}}
\newcommand{\GmC}{{\G_{{\rm m},\C}}}
\newcommand{\cc}{\raise 1.7pt \hbox{\Tiny{$\bullet$}}}
\newcommand{\Hr}{{\rm H}}
\newcommand{\Zr}{{\rm Z}}
\newcommand{\Br}{{\rm B}}
\newcommand{\Nr}{{\rm N}}
\newcommand{\Ga}{{\Gamma}}
\newcommand{\GGbar}{{\ov\GG}}
\newcommand{\xibar}{{\bar \xi}}
\newcommand{\gbar}{{\bar g}}
\newcommand{\Gbar}{{\ov G}}
\newcommand{\num}{{\scriptscriptstyle\#}}
\newcommand{\sigmai}{{\sigma_i}}
\newcommand{\gi}{{g_i}}
\newcommand{\Xg}{{\mathfrak X}}
\newcommand{\coker}{{\rm coker}}
\newcommand{\SmallMatrix}[1]{{\tiny\arraycolsep=0.4\arraycolsep\ensuremath
{\begin{pmatrix}#1\end{pmatrix}}}}
\newcommand{\vphi}{{\varphi}}
\newcommand{\tl}{\mathfrak{t}}
\newcommand{\parpr}{{\partial\hs'}}
\newcommand{\ntil}{{\tilde n}}
\newcommand{\obs}{{\rm obs}}
\newcommand{\ug}{{\mathfrak u}}
\newcommand{\h}{{\mathfrak{h}}}
\newcommand{\bkappa}{{\beta}}
\newcommand{\rr}{{(r)}}
\newcommand{\anti}{{\text{a-r}}}
\newcommand{\so}{{\mathfrak{so}}}
\newcommand{\eee}{{\mathfrak{e}}}
\newcommand{\eeeR}{{\boldsymbol{\mathfrak{e}}}}
\begin{document}

\title[Computing real Galois cohomology]%
{Computing Galois cohomology of a real\\ linear algebraic group}

\author{Mikhail Borovoi}
\address{Raymond and Beverly Sackler School of Mathematical Sciences,
       Tel Aviv University, 6997801 Tel Aviv, Israel}
\email{borovoi@tauex.tau.ac.il}

\author{Willem A. de Graaf}
\address{Department of Mathematics, University of Trento,
         Povo (Trento), Italy}
\email{degraaf@science.unitn.it}

\thanks{ Borovoi was partially supported
by the Israel Science Foundation (grant 1030/22).
}

\keywords{Real algebraic group, reductive group,
real Galois cohomology, second nonabelian Galois cohomology, computer algebra}

\subjclass{%
 20G10
, 11E72
 , 20G20
 , 68W30
}

\date{\today}

\begin{abstract}
Let $\GG$ be a linear algebraic group, not necessarily  connected or reductive,
over the field of real numbers $\R$.
We describe a method, implemented on computer, to find the first Galois cohomology set $\Hr^1(\R,\GG)$.
The output is a list of $1$-cocycles in $\GG$. Moreover, we describe an implemented algorithm
that, given a $1$-cocycle $z\in \Zr^1(\R, \GG)$, finds the cocycle in the computed list
to which $z$ is equivalent, together with an element of $\GG(\C)$ realizing the
equivalence.
\end{abstract}

\maketitle

\tableofcontents

\setcounter{section}{-1}

\section{Introduction}
\label{s:intro}

Let $\GG$ be a linear algebraic group over the field of real numbers $\R$.
See Section \ref{s:Nonab-abstract} for the definition
of the first Galois cohomology pointed set $\Hr^1(\R,\GG)$;
see also Serre's book \cite{Serre} for the definition of $\Hr^1(k,\GG)$
for an algebraic group $\GG$ over an arbitrary field $k$.
For $k=\R$, the set $\Hr^1(\R,\GG)$ is finite;
see Example~(a) in Section III.4.2
and Theorem~4 in Section III.4.3 of \cite{Serre}.
In terms of Galois cohomology one can state answers to many natural questions;
see Serre  \cite[Section III.1]{Serre} and  Berhuy \cite{Berhuy}.

In particular, one can use  Galois cohomology for the classification of orbits
in representations of real algebraic groups.
Such representations occur in many areas of mathematics and
physics. There are many situations where there are techniques
to determine the orbits of a complex Lie group (that is, of the group of complex points),
but where the orbits of the corresponding real Lie group (the group of real points) are of
great interest. We give two examples illustrating this.

The first example concerns
the classification of $k$-forms on $\R^n$, which is of importance in
differential geometry (see \cite{hit1,hit2}). This is tantamount to classifying
the orbits of $\GL(n,\R)$ on $\bigwedge^k \R^n$. For $k=3$ this classification
was carried out for $n=8$ by Djokovi\'c \cite{Djokovic} and  for $n=9$ in \cite{BGL21};
in \cite{BGL21} we considered the action of the group $\SL(9,\R)$ rather than $\GL(9,\R)$.
For these cases, the orbits of $\GL(8,\C)$ on $\bigwedge^3 \C^8$
and of $\SL(9,\C)$ on $\bigwedge^3 \C^9$ can
be determined using Vinberg's theory of $\theta$-groups
(see \cite{Vinberg76,VE1978}). In \cite{Djokovic} and \cite{BGL21} the real orbits were
determined using Galois cohomology.

Our second example concerns the study of
the Einstein equations of gravity in theoretical physics. Here Lie group
representations coming from a symmetric pair play a pivotal role;
see \cite{bcptr} for an overview. In \cite{cgpt} the real nilpotent orbits of such a
representation (of the group $\SL(2,\R)\times \mathrm{Sp}(6,\R)$ acting on a
28-dimensional vector space) were determined using a technique based on
$\mathfrak{sl}_2$-triples. Due to its computational complexity, it seems
unlikely that this technique can be applied to higher dimensional cases,
and one can hope that an approach based on Galois cohomology can yield a significant
increase in the range of cases that can be dealt with.

We describe known results on Galois cohomology of real algebraic groups.
The Galois cohomology of classical groups and adjoint groups
is well known. The Galois cohomology of compact groups was computed
by Borel and Serre \cite[ Theorem 6.8]{BS};
see also Serre's book \cite[Section III.4.5]{Serre}.
When $\GG$ is a connected compact $\R$-group with a maximal torus $\TT$,
the set $\Hr^1\GG\coloneqq\Hr^1(\R,\GG)$ is in a canonical bijection with the set
of orbits of the Weyl group $W=W(\GG,\TT)$
on the group $\TT(\R)_2$ of elements of order dividing 2 in $\TT(\R)$,
and the corresponding elements of order dividing 2 are explicit cocycles
representing the elements of $\Hr^1\GG$.

In the announcement \cite{B88}, which was detailed in \cite{Borovoi22-CiM}, the first-named author
computed the Galois cohomology set $\Hr^1(\R,\GG)$
for a connected reductive $\R$-group $\GG$ (not necessarily compact)
by the method of Borel and Serre,
in terms of a certain action of the Weyl group
on the first Galois cohomology $\Hr^1\TT$ of a {\em fundamental} torus $\TT\subset\GG$
(the maximal torus containing a maximal compact torus).

The result of \cite{B88} has been used in a few articles,
in particular, in \cite{Sch},  \cite{C}, and  \cite{NP}.
Using this result, Borovoi, Gornitskii, and Rosengarten \cite{BGR}
described the Galois cohomology of {\em quasi-connected} reductive $\R$-groups
(normal subgroups of connected reductive $\R$-groups).
In \cite{BE}, Borovoi and  Evenor  used the result of \cite{B88}
to describe {\em explicitly} the Galois cohomology of {\em simply connected} semisimple $\R$-groups.
In \cite{BT21} and \cite{BT*}, Borovoi and  Timashev  used ideas of Kac \cite{Kac68}, \cite{Kac69}
and the result of \cite{B88} to describe explicitly
the Galois cohomology of connected {\em reductive} $\R$-groups.

As outlined above,
the Galois cohomology set $\Hr^1\GG$ is necessary for classification problems
in Algebraic Geometry and Linear Algebra over $\R$;
see, for instance, \cite{Djokovic} and \cite{BGL21}.
Moreover, for Arithmetic Geometry one needs the Galois cohomology
$\Hr^1(\Q,\GG)$  for algebraic groups $\GG$ over the field of rational numbers $\Q$,
and in order to compute the Galois cohomology over $\Q$
one needs the Galois cohomology over $\R$;
see \cite[Theorem 1.5.1(1)]{BK}.

Our paper \cite{BGL21} contains a typical application of real Galois cohomology.
We classified the orbits of $\SL(9,\R)$ on
$\bigwedge^3 \R^9$, knowing the orbits of $\SL(9,\C)$ on $\bigwedge^3 \C^9$
(which had been classified by Vinberg and Elashvili \cite{VE1978}).
In such a situation one needs to compute
the first Galois cohomology $\Hr^1\GG$ in many cases (in \cite{BGL21}
it was computed ``by hand'' in more than 100 cases). Furthermore, these
sets are needed in explicit form, that is, one needs a list of cocycles in
$\GG$ representing all classes in $\Hr^1\GG$. Secondly, one needs a
method to decide equivalence of cocycles.

We remark that \cite{at} describes methods to compute the size (cardinality)
of the finite set  $\Hr^1\GG$ when $\GG$ is connected and reductive.
However, as said above, for our purposes
the size is not enough: we need explicit cocycles representing the elements of $\Hr^1\GG$.
Moreover, the explicit results of  \cite{BT21} and \cite{BT*}
for connected reductive $\R$-groups, and the less explicit results of \cite{BGR}
for quasi-connected reductive $\R$-groups, are not enough either:
we need to be able to compute $\Hr^1\GG$ for linear
algebraic groups that are not necessarily reductive, or connected, or quasi-connected.
In this paper we give algorithms, implemented on computer, to determine the Galois cohomology set
of an {\em arbitrary} real linear algebraic group.

For a linear algebraic group $\GG$ over $\R$ we consider
the following two problems.

\begin{main-problem}\label{mp:1}
Find a list
\[g_1,\dots, g_m\in\Zr^1\hs\GG\coloneqq\Zr^1(\R,\GG)\]
of 1-cocycles representing all cohomology classes
in $\Hr^1\GG\coloneqq \Hr^1(\R,\GG)$ so that
for any cohomology class we have exactly one cocycle.
\end{main-problem}

\begin{main-problem}\label{mp:2}
Assume that we have solved Problem \ref{mp:1} for $\GG$.
For any 1-cocycle $g\in \Zr^1\hs\GG$, determine  $i$ with $1\le i\le m$
such that $g\sim g_i$ and find an element $s\in \GG(\C)$ such that
\[s^{-1}\cdot g\cdot\upgam\hm s=g_i\]
where $\gamma$ denotes complex conjugation.
\end{main-problem}

We solve these problems using computer for a general linear algebraic $\R$-group,
not necessarily connected or reductive.

We describe our setup.
We write $\Gamma=\Gal(\C/\R)=\{1,\gamma\}$
for the Galois group of $\C$ over $\R$,
where $\gamma$ is the complex conjugation.
Let $\GG$ be a real linear algebraic group.
In the coordinate language, the reader may regard $\GG$ as
a subgroup in the general linear group $\GL_n(\C)$ (for some integer~$n$)
defined by polynomial equations with {\em real} coefficients in the matrix entries;
see Borel \cite[Section 1.1]{Borel-66}.
More conceptually, the reader may assume that $\GG$ is an affine group scheme
of finite type over $\R$; see  Milne \cite[Definition 1.1]{Milne-AG}.
We say just that $\GG$ is an $\R$-group.
Let $\GG(\R)$ and $\GG(\C)$ denote the groups of real points
and complex points of $\GG$, respectively.
We denote by  $G$
(the same letter, but in the usual non-bold font)
the base change $G=\GG\times_\R\C$ of $\GG$ from $\R$ to $\C$.
By abuse of notation we identify $G$ with $\GG(\C)$.
The Galois group $\Gamma$ acts on $G$, that is,
the complex conjugation $\gamma$
acts on $G$ by an anti-holomorphic involution
\[\sigma_\gG\colon G\to G,\quad g\mapsto \upgam g\ \  \text{for}\ g\in G.\]
Moreover, the anti-holomorphic involution $\sigma_\gG$ is anti-regular
in the sense of Definition \ref{d:reg-anti-reg} in Appendix \ref{app:A}.
Conversely, a pair $(G,\sigma)$,
where $G$ is a $\C$-group (a linear algebraic group over $\C$)
and $\sigma\colon G\to G$ is an anti-regular involution of $G$,
by Galois descent comes from a unique
(up to a canonical isomorphism) $\R$-group $\GG$;
see the references in Subsection \ref{ss:G-anti}.
Then by abuse of notation we write $\GG=(G,\sigma)$.

We describe the input of our computer program for Problem \ref{mp:1}.
We assume that $\GG\subset \GL_{n,\R}$.
If $\GG$ is connected (that is, $G$ is connected), then the only input is a list of real $n\times n$ -matrices that form a basis
of the real Lie subalgebra $\gR_\rR=\Lie\GG\subset \gl(n,\R)$.
When  $\GG$ is not connected, the input is a list of real matrices that form a basis of $\gR_\rR$,
thus defining the identity component of $\GG$, and a list of complex matrices
whose elements are representatives of the elements of the component group.
From this input, the main program computes a list of cocycles
whose classes exhaust $\Hr^1(\R,\GG)$, solving Problem \ref{mp:1}.
We also have a program for deciding equivalence of cocycles, which solves Problem \ref{mp:2}.

Using the algorithm indicated in Remark \ref{rem:iseltof},
we can easily compute a multiplication table of
the component group $\pi_0(G)$, as well as the $\Gamma$-action on $\pi_0(G)$.
Therefore, we may and will assume that we have this data as input to our algorithms as well.
Note that we do not need data like Dynkin diagram, based root datum, etc.
These are computed as needed.

We have implemented our algorithms in the language of the computational
algebra system {\sf GAP}; see \cite{GAP4}. For certain number theoretic
computations we have written a small interface to {\sf SageMath}, \cite{sage}.
Our programs are available on the website

\begin{verbatim}
https://degraaf.maths.unitn.it/galcohom.html
\end{verbatim}

We now give an outline of our methods for computing  $\Hr^1(\R, \GG)$.
For simplicity we write $\Hr^n\GG$ for $\Hr^n(\R,\GG)$ $(n=1,2)$.
When $\GG$ is connected and  reductive,
we compute $\Hr^1\GG$ by the method of \cite{B88} and \cite{Borovoi22-CiM}.
When $\GG$ is connected but not necessary reductive,
we reduce Problems \ref{mp:1} and \ref{mp:2}
to the reductive case using Sansuc's lemma.

For a general  $\R$-group $\GG$ (not necessarily connected)
we write the short exact sequence
\[1\to \GG^0\to \GG\to\pi_0(\GG)\to 1\]
where $\GG^0$ is the identity component of $\GG$,
and $\pi_0(\GG)$ is the component group.
We compute $\Hr^1\GG$ by d\'evissage, that is,
we compute $\Hr^1\GG$ via certain calculations
with the component group  $\pi_0(\GG)$ and with the identity component $\GG^0$.

First, we compute $\Hr^1\pi_0(\GG)$ for the finite $\Gamma$-group  $\pi_0(\GG)$ by brute force.
After that, for each cohomology class
$\xi_0=[c_0]\in \Hr^1\pi_0(\GG)$ we try to lift $\xi_0$ to $\Hr^1\GG$,
that is, we try to lift the cocycle $c_0\in \Zr^1\pi_0(\GG)$ to a cocycle
$c\in \Zr^1\GG$.
When trying to do that, we obtain an obstruction $\obs(\xi_0)$
living in the second nonabelian Galois cohomology set
$\Hr^2(\R,G^0,\bkappa)$ where $\bkappa$ is a certain {\em band (lien)}.
The cohomology class $\xi_0=[c_0]$ can be lifted to $\Hr^1\GG$
if and only if the obstruction $\obs(\xi_0)$ is {\em neutral}.
We check whether $\obs(\xi_0)$ is neutral or not,
using the criterion of neutrality of \cite[Theorem 5.5]{Borovoi-Duke}.
If the obstruction $\obs(\xi_0)$ is non-neutral, we discard $\xi_0$.
If it is neutral, we lift $c_0$ to a cocycle $c\in \Zr^1 \GG$,
and we obtain the preimage in $\Hr^1\GG$ of the singleton $\{\xi_0\}$
via computing $\Hr^1\hs_c \GG^0$,
where the connected  $\R$-group $_c\GG^0$
is obtained from $\GG^0$ by twisting using $c$.

\begin{remark}\label{r:brute-force}
For the brute force computation of  $\Hr^1\pi_0(\GG)$ we assume that
$\pi_0(\GG)$ is ``not too large''. In practice that means that its cardinality
should not be much bigger than about a million. In particular, we also assume
that we have a list of all elements of the component group. It is an
interesting problem in its own right to develop efficient algorithms to
compute the Galois cohomology of finitely generated
$\Gamma$-groups (finite or infinite). We will not go into that in this paper.
\end{remark}

The paper is roughly organized as follows.
The first sections (\ref{s:Abelian}
to \ref{s:Nonab-abstract}) contain theoretical background on Galois
cohomology. Sections \ref{app:B} and \ref{sec:tori} contain,
respectively, general algorithms concerning Levi decompositions of algebraic Lie algebras
and general algorithms concerning algebraic tori. In Section \ref{s:connected} we describe how we find
$\Hr^1\GG$ for a connected reductive group $\GG$, and  Section
\ref{s:connected-equivalence} contains a method for deciding equivalence of
cocycles for such groups.
Section \ref{s:H2}  is concerned with second nonabelian cohomology.
In Section \ref{s:non-reductive} it is
explained how to solve Problems \ref{mp:1} and \ref{mp:2} for connected
non-reductive groups. Section \ref{s:H2-quasi-torus}
describes methods to compute the (abelian) $\Hr^2$ for
quasi-tori. The second nonabelian cohomology is used in
Sections \ref{s:neutralizing} and \ref{s:neutralizing-nonred}, which deal
with the problem of neutralizing a 2-cocycle. This is necessary for the general
algorithm that computes the $\Hr^1(\R,\GG)$ for non-connected algebraic
groups, detailed in Section \ref{s:non-connected}. Finally, Section
\ref{sec:gap} briefly discusses the implementation of the algorithms and
gives the running times on some sample inputs.

{\sc Acknowledgements.} The authors are grateful to  H\^ong V\^an L\^e
for her suggestion to develop a computer program
computing the Galois cohomology of a real linear algebraic group.
We thank the anonymous referees for thorough refereeing and for useful comments,
which helped us to improve the exposition.

\subsection*{Notation and conventions}
In this paper, by an algebraic group we mean a {\em linear} algebraic group.
By letters $\GG,\HH,\dots$ in the boldface  font we denote {\em real} algebraic groups.
By the same letters, but in the usual (non-bold) font $G,H,\dots$,
we denote the corresponding complex algebraic groups
$G=\GG\times_\R\C$, $H=\HH\times_\R\C$,\dots,
and by the corresponding small Gothic letters $\g,\h,\dots$, we denote the (complex) Lie algebras of $G,H,\dots$.
We denote by $\gR_\rR=\Lie \GG$, $\hR_\rR=\Lie \HH$ the corresponding real Lie algebras;
then $\g=\gR_\rR\otimes_\R\C$, $\h=\hR_\rR\otimes_\R\C$.
We denote by $\GG(\R)$ and $\GG(\C)$ the Lie groups of the real points and the complex points of $\GG$, respectively;
then $\gR_\rR=\Lie \GG(\R)$, $\g=\Lie \GG(\C)$.
By abuse of notation, we identify the complex algebraic group $G=\GG\times_\R\C$ with the group of $\C$-points $\GG(\C)=G(\C)$.
In particular, $g\in G$ means $g\in G(\C)$.

We gather some of our notations here:
\begin{itemize}
\item $\R$ and $\C$ denote the fields of real and complex numbers, respectively;
\item $\Gamma=\Gal(\C/\R)=\{1,\gamma\}$ where $\gamma$ is the complex conjugation;
\item $\GmR$ and $\GmC$ denote the multiplicative groups over $\R$ and over $\C$, respectively;
\item $Z(G)$ denotes the center of an algebraic group $G$;
\item $\cZ_G(S)$ denotes the centralizer in $G$ of a set $S$;
\item $\zg_\g(S)$ denotes the centralizer in a Lie algebra $\g$ of a set $S$;
\item $\cN_G(H)$ denotes the normalizer in $G$ of an algebraic subgroup $H\subset G$;
\item $\Aut(G)$ denotes the automorphism group of $G$;
\item $\Inn(G)$ denotes the group of inner automorphisms of $G$;
\item $\Out(G)=\Aut(G)/\Inn(G)$;
\item $\SAut_\anti(G)$ denotes the set of anti-regular automorphisms of a complex algebraic group $G$; see Appendix \ref{app:A};
\item $\SAut(G)=\Aut(G)\cup\SAut_\anti(G)$, which is a group;
\item $\SOut(G)=\SAut(G)/\Inn(G)$;
\item $G^0$ is the identity component of $G$;
\item $\pi_0(G)\coloneqq G/G^0$ is the component group of $G$;
\item $\Hr^n\GG=\Hr^n(\R,\GG)$ for $n=1,2$ (when $n=2$ we assume that $\GG$ is commutative).
\end{itemize}

\section{Abelian cohomology}
\label{s:Abelian}

\begin{subsec}
\label{ss:H1-abelian}
Let $A$ be a $\Ga$-module, that is, an abelian group written additively,
endowed with an action of $\Gamma=\{1,\gamma\}$.
We consider the first cohomology group $\Hr^1(\Ga,A)$.
We write $\Hr^1\hm A$ for $\Hr^1(\Ga,A)$.
Recall that
\[\Hr^1\hm A=\Zr^1\hm A/\Br^1\hm A,\quad\
\text{where}\quad\ \Zr^1\hm A=\{a\in A\mid\upgam a=-a\}, \quad
\Br^1\hm A=\{\upgam a'-a'\mid a'\in A\}.\]

We define the second cohomology group $\Hr^2A$ by
\[ \Hr^2\hm A=\Zr^2\hm A/ \Br^2\hm A,\quad
\text{where}\ \ \Zr^2\hm A=A^\Gamma\coloneqq \{a\in A\mid \upgam a=a\},\ \,
\Br^2\hm A=\{\upgam a'+a'\mid a'\in A\}.\]

For $k\in\Z$ we define the coboundary operator
\[d^k\colon A\to A,\quad a\mapsto\upgam a-(-1)^k a.\]
In other words, $d^k=\gamma-(-1)^k\in \Z[\Gamma]$,
 where $\Z[\Gamma]=\Z\oplus\Z\gamma$  is the group ring of $\Gamma$.
Then  $d^{k+1}\circ d^k=0$.
We define the {\em Tate cohomology groups}
$\wh \Hr^k\hm A$ for all $k\in \Z$ by
\[\wh \Hr^k\hm A=\Zr^k\hm A/\Br^k\hm A,\]
where
\begin{align*}
\Zr^k\hm A=\ker d^k=\{a\in A\mid\upgam a=(-1)^k a\},\quad
\Br^k\hm A=\im d^{k-1}=\{\upgam a'+(-1)^k  a'\mid a'\in A\}.
\end{align*}
Then clearly
\[\wh\Hr^k\hm A=\Hr^1\hm A\ \ \text{if $k$ is odd,\quad and}
       \quad \wh \Hr^k\hm A=\Hr^2\hm A\ \ \text{if $k$ is even.}\]
\end{subsec}

In this paper, for any $k\in\Z$ we write $\Hr^k\hm A$ for $\wh\Hr^k\hm A$.
In particular,
\[\Hr^0\hm A=\Zr^0\hm A/\Br^0\hm A=A^\Gamma\hm/\{a'+\upgam a'\mid a'\in A\}.\]

\begin{remark}
In the standard exposition, our definitions become theorems
that hold specifically for the cohomology of a finite group $\Gamma$ of order 2;
see \cite[Theorem 5 in Section 8]{AW}.
\end{remark}

\begin{lemma}\label{l:2-xi}
For any $k\in \Z$ and $\xi\in \Hr^k\hm A$, we have $2\xi=0$.
\end{lemma}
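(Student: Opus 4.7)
The plan is very short. Starting from a cocycle $a \in \mathrm{Z}^k\hm A$, so that $\upgam a = (-1)^k a$, I want to exhibit an explicit element $a' \in A$ whose coboundary $d^{k-1}(a') = \upgam a' + (-1)^k a'$ equals $2a$; this will place $2a$ in $\mathrm{B}^k\hm A$ and hence kill $2\xi = [2a]$ in $\mathrm{H}^k\hm A$.

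The natural candidate, uniform in the parity of $k$, is $a' = (-1)^k a$. First I would compute
\[
d^{k-1}\bigl((-1)^k a\bigr) = \upgam\bigl((-1)^k a\bigr) + (-1)^k \cdot (-1)^k a = (-1)^k \upgam a + a,
\]
and then use the cocycle condition $\upgam a = (-1)^k a$ to rewrite $(-1)^k \upgam a = (-1)^{2k} a = a$, so that the right-hand side becomes $a + a = 2a$. This shows $2a \in \mathrm{B}^k\hm A$, hence $2\xi = 0$ in $\mathrm{H}^k\hm A$.

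There is really no hard part: the only bookkeeping issue is to handle both parities of $k$ at once, and the choice $a' = (-1)^k a$ does this in a single line. Conceptually, the computation just reflects the standard fact that Tate cohomology of a finite group is annihilated by the group order; here $|\Gamma| = 2$, and the identity $(1 + (-1)^k \gamma)\cdot (-1)^k = 1 + (-1)^k \gamma \cdot (-1)^k$ in $\Z[\Gamma]$ applied to $a$ gives exactly $2a$ once $\gamma a = (-1)^k a$ is substituted.
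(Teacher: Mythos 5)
Your computation is correct: with $a\in \Zr^k\hm A$, so $\upgam a=(-1)^k a$, the element $a'=(-1)^k a$ satisfies $d^{k-1}(a')=\upgam a'+(-1)^k a'=(-1)^k\upgam a+a=2a$, hence $2a\in\Br^k\hm A$ and $2\xi=[2a]=0$. The paper itself gives no argument here — it only cites the literature (Atiyah--Wall, and Lemma 3.2.1 of the authors' earlier paper, and Lemma 1.3 of Borovoi--Timashev) — so your proof differs in that it is a self-contained one-line verification directly from the paper's definitions of $d^k$, $\Zr^k\hm A$, and $\Br^k\hm A$; this is exactly the standard fact that Tate cohomology of a finite group is killed by the group order, specialized to $|\Gamma|=2$, and it buys the reader an argument in place of a reference. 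A small remark: the even more naive choice $a'=a$ already gives $d^{k-1}(a)=2(-1)^k a$, which also lies in the subgroup $\Br^k\hm A$ and hence suffices; your sign twist $(-1)^k$ merely makes the answer come out as $2a$ on the nose, uniformly in the parity of $k$.
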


\begin{proof}
See \cite[Section 6, Corollary 1 of Proposition 8]{AW}, or
\cite[Lemma 3.2.1]{BGL21}, or \cite[Lemma 1.3]{BT*}.
\end{proof}

\begin{subsec}
\label{ss:ABC-abelian}
Let
\[0\to A\labelto{i} B\labelto{j} C\to 0\]
be a short exact sequence of $\Ga$-modules.
It gives rise to a cohomology exact sequence
\begin{equation*}
\dots\Hr^{k-1} C\labelto{\delta^{k-1}}\Hr^k\hm A\labelto{i_*^k}
     \Hr^k B \labelto{j_*^k} \Hr^k C\labelto{\delta^k}\Hr^{k+1}\hm A\dots
\end{equation*}
We recall the formula for $\delta^k$.
We identify $A$ with $i(A)\subseteq B$.
Let $[c]\in \Hr^k C$, $c\in \Zr^k C$.
We lift $c$ to some $b\in B$ and set $a=d^k\hs b$.
Then $a\in \Zr^{k+1}\hm A$.
We set $\delta^k[c]=[a]\in \Hr^{k+1}\hm A$.
In particular, we have
\begin{equation*}
\delta^0[c]=[\upgam b-b]\ \,\text{for}\ c\in \Zr^0\hs C,\qquad
     \delta^1[c]=[\upgam b +b]\ \,\text{for}\ c\in \Zr^1\hs C.
\end{equation*}
\end{subsec}

\section{Hypercohomology}
\label{s:hyper}

\begin{definition}
A {\em short complex of $\Gamma$-modules}
is a morphism of $\Gamma$-modules $A_{1}\labelto{\partial} A_0$.
\end{definition}

\begin{subsec}
For $k\in\Z$ we define a differential
\[D^k\colon\, A_1\oplus A_0\to A_1\oplus A_0,\quad\ D^k(a_1,a_0)=
      \big(d^{k+1} a_1,\, d^k a_0-(-1)^k\partial a_1\big).\]
Then $D^k\circ D^{k-1}=0$.
We define the {\em  $k$-th Tate hypercohomology group} (see \cite[Section 3]{BK})
\begin{equation*}
\H^k(A_{1}\labelto{\partial} A_0)=\Zr^k(A_{1}\labelto{\partial} A_0)\hs/\hs
   \Br^k(A_{1}\labelto{\partial} A_0),
\end{equation*}
where
\begin{equation*}
\begin{aligned}
\Zr^k(A_{1}\labelto{\partial} A_0)&=\ker D^k\\
&=\{(a_{1},a_0)\in A_{1}\oplus A_0,\,\hs
      \mid\,d^{k+1} a_1=0,\,  d^k a_0-(-1)^k\partial a_{1}=0\},\\
\Br^k(A_{1}\labelto{\partial} A_0)&={\rm im\,} D^{k-1}\\
&=\{(d^k a'_1,\ d^{k-1} a'_0-(-1)^{k-1}\partial a'_1\hs) \,\mid\, (a'_1, a_0')\in A_1\oplus A_0\}.
\end{aligned}
\end{equation*}
For simplicity we write $\H^k(A_{1}\to A_0)$ instead of $\H^k(A_{1}\labelto{\partial} A_0)$.
\end{subsec}

\begin{examples}
\begin{enumerate}
\item We have an isomorphism
\[\Hr^k \hm A_0\isoto \H^k(0\to A_0),\quad [a_0]\mapsto [0,a_0].\]

\item We have an isomorphism
\[\H^k(A_1\to 0)\isoto \Hr^{k+1}\hm A_1, \quad [a_1,0]\mapsto [a_1].\]
\end{enumerate}
\end{examples}

The correspondence $(A_{1}\to A_0)\rightsquigarrow \H^k(A_{1}\to A_0)$ is a functor
from the category of short complexes of $\Gamma$-modules to the category of abelian groups.
Moreover, a short exact sequence of short complexes of $\Gamma$-modules
\[0\to(A_{1}\to A_0)\labelto\iota (B_{1}\to B_0)\labelto\varkappa (C_{1}\to C_0)\to 0\]
gives rise to a hypercohomology exact sequence
\begin{equation}\label{e:exact-complexes}
\dots \H^k(A_{1}\to A_0)\labelto{\iota_*^k} \H^k(B_{1}\to B_0)\labelto{\varkappa_*^k}
                         \H^k(C_{1}\to C_0)\labelto{\delta^k}\H^{k+1}(A_{1}\to A_0)\dots
\end{equation}

We specify the maps $\delta^k$ in \eqref{e:exact-complexes}.
We identify the complex $(A_1\to A_0)$ with its image in $(B_1\to B_0)$.
Let $(c_1,c_0)\in \Zr^k(C_0\to C_1)\subseteq (C_1\oplus C_0)$.
We lift $(c_1,c_0)$ to some $(b_1,b_0)\in B_1\oplus B_0$ and set $(a_1,a_0)=D^k(b_1,b_0)$.
Then $(a_1,a_0)\in \Zr^{k+1}(A_1\to A_0)$,
and we set $\delta^k[c_1,c_0]=[a_1,a_0]\in\H^{k+1}(A_1\to A_0)$.

\begin{example}
Applying \eqref{e:exact-complexes} to the short exact sequence of complexes
\[0\to (0\to A_0)\labelto{\lambda} (A_{1}\to A_0)\labelto{\mu} (A_{1}\to 0)\to 0\]
with \,$\lambda(0,a_0)=(0,a_0)$, \,$\mu(a_1,a_0)=(a_1,0)$,
\,we obtain an exact sequence
\begin{equation}\label{e:coho-hyper}
\dots\to \Hr^k\hm A_{1} \labelto{\partial_*^k} \Hr^k\hm A_0
  \labelto{\lambda^k_*} \H^k(A_{1}\to A_0)\labelto{\mu_*^k} \Hr^{k+1}\hm A_{1}\,
  \labelto{\partial_*^{k+1}}\, \Hr^{k+1}\hm A_0\to\dots
\end{equation}
where the maps  $\lambda_*^k$, $\mu_*^k$, and $\partial_*^{k+1}$
in \eqref{e:coho-hyper} are the following:
\begin{align*}
&\lambda_*^k\colon\,\Hr^k\hm A_0\hs\to\hs \H^k(A_{1}\to A_0),\qquad    [a_0]\mapsto [0,a_0],\\
&\mu_*^k\colon\,\H^k(A_{1}\to A_0)\hs\to\hs \Hr^{k+1}\hm A_{1},\quad   [a_1,a_0]\mapsto [a_1],\\
&\partial_*^{k+1}\colon \Hr^{k+1}\hm A_1\to \Hr^{k+1}\hm A_0,\qquad\quad
             [a_1]\mapsto(-1)^{k+1}[\partial a_1]=[\partial a_1],
\end{align*}
where $(-1)^{k+1}[\partial a_1]=[\partial a_1]$ because $2[\partial a_1]=0$ by Lemma \ref{l:2-xi}.
\end{example}

\begin{definition}
A morphism of short complexes
\begin{equation}\label{e:quasi}
\varphi\colon\,(A_{1}\labelto{\partial} A_0)\,\to\, (A'_{1}\labelto{\parpr} A'_0)
\end{equation}
is called a {\em quasi-isomorphism} if the induced homomorphisms
\[\ker\partial\to\ker\parpr \quad \text{ and }\quad \coker\partial\to\coker\parpr\]
are isomorphisms.
\end{definition}

\begin{examples}
\begin{enumerate}
\item If $ A_{1}\into A_0$ is injective, then
\   $(A_{1}\into A_0)\,\to\,\big( 0,\coker[ A_1\hm\into\hm A_0]\big)$ \
  is a quasi-isomorphism.

\item If $ A_{1}\onto A_0$ is surjective, then
   \  $\big(\ker\,[ A_1\hm\onto\hm A_0]\to 0\big)\,\to\,(A_{1}\onto A_0)$ \
   is a quasi-isomorphism.
\end{enumerate}
\end{examples}

\begin{lemma}[well-known]
\label{l:quasi}
  A quasi-isomorphism of complexes of $\Gamma$-modules  \eqref{e:quasi}
induces isomorphisms on the hypercohomology
\[\varphi_*^k\colon\hs \H^k(A_{1}\to A_0)\isoto \H^k(A'_{1}\to A'_0).\]
\end{lemma}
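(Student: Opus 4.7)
The plan is to reduce the statement to the two elementary quasi-isomorphisms exhibited in the Examples preceding Lemma~\ref{l:quasi}, and then apply the five lemma to a suitable comparison of hypercohomology long exact sequences.

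First I would handle the elementary quasi-isomorphism $(A_1\hookrightarrow A_0)\to (0\to\coker\partial)$ of the first of those Examples. Embedding this map into the short exact sequence of short complexes
\[0 \to \bigl(A_1 \xrightarrow{\id} A_1\bigr) \to \bigl(A_1 \hookrightarrow A_0\bigr) \to \bigl(0 \to \coker\partial\bigr) \to 0,\]
I would apply \eqref{e:coho-hyper} to the leftmost complex to see that its hypercohomology vanishes in every degree, since the induced map $\partial_*^k\colon \Hr^k A_1\to\Hr^k A_1$ equals the identity and hence, by exactness of \eqref{e:coho-hyper}, forces $\H^k(A_1\xrightarrow{\id}A_1)=0$. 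The hypercohomology long exact sequence \eqref{e:exact-complexes} then produces a natural isomorphism $\H^k(A_1\hookrightarrow A_0)\isoto\Hr^k(\coker\partial)$. A dual argument, using the subcomplex $(\ker\partial\to 0)$ and the quotient $(A_0\xrightarrow{\id}A_0)$, settles the surjective case of the second Example.

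For a general short complex $(A_1\xrightarrow{\partial}A_0)$, set $K=\ker\partial$, $I=\im\partial$ and $C=\coker\partial$. The functorial short exact sequence of short complexes
\[0 \to (K \to 0) \to (A_1 \to A_0) \to (I \hookrightarrow A_0) \to 0,\]
combined with \eqref{e:exact-complexes}, the identification $\H^k(K\to 0)\cong\Hr^{k+1} K$ recorded in the Examples of Section~\ref{s:hyper}, and the identification $\H^k(I\hookrightarrow A_0)\cong\Hr^k C$ just obtained, yields a natural long exact sequence
\begin{equation*}
\cdots \to \Hr^{k+1} K \to \H^k(A_1\to A_0) \to \Hr^k C \to \Hr^{k+2} K \to \cdots
\end{equation*}
Given a quasi-isomorphism $\varphi\colon(A_1\to A_0)\to(A'_1\to A'_0)$, the induced maps $K\to K'$ and $C\to C'$ are isomorphisms by hypothesis, hence so are the induced maps on $\Hr^{k+1} K$ and on $\Hr^k C$. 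Comparing the above long exact sequence with its counterpart for $(A'_1\to A'_0)$ via $\varphi$ produces a commutative ladder in which the four vertical maps flanking $\varphi^k_*\colon\H^k(A_1\to A_0)\to\H^k(A'_1\to A'_0)$ are isomorphisms, and the five lemma delivers the conclusion.

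The main obstacle I anticipate is bookkeeping rather than substance: one has to verify that the auxiliary short exact sequences of complexes and the identifications of hypercohomology used in the special cases are all functorial in the complex $(A_1\to A_0)$, so that the comparison ladder genuinely commutes and no sign or identification issue spoils the five-lemma step.
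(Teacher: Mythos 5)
Your argument is correct, but it is genuinely different from what the paper does: the paper disposes of Lemma \ref{l:quasi} with a one-line citation to Brown [Proposition VII(5.2)], i.e.\ it invokes the general fact for complexes of $\Gamma$-modules rather than proving it, whereas you give a self-contained d\'evissage tailored to two-term complexes. Your reduction is sound: the auxiliary sequences $0\to(A_1\xrightarrow{\id}A_1)\to(A_1\hookrightarrow A_0)\to(0\to\coker\partial)\to 0$ and $0\to(K\to 0)\to(A_1\to A_0)\to(I\hookrightarrow A_0)\to 0$ are indeed short exact sequences of short complexes, the vanishing $\H^k(A_1\xrightarrow{\id}A_1)=0$ follows from \eqref{e:coho-hyper} exactly as you say, and the resulting exact sequence $\cdots\to \Hr^{k-1}C\to\Hr^{k+1}K\to\H^k(A_1\to A_0)\to\Hr^k C\to\Hr^{k+2}K\to\cdots$ with $K=\ker\partial$, $C=\coker\partial$ is functorial in the complex, so the five lemma applies since a quasi-isomorphism induces isomorphisms $K\isoto K'$, $C\isoto C'$. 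The only point you flag as ``bookkeeping'' is the real one: naturality of the connecting maps in \eqref{e:exact-complexes} with respect to morphisms of short exact sequences of complexes, which the paper states without proof but which is a routine chase with the explicit description of $\delta^k$ given after \eqref{e:exact-complexes}. What the two approaches buy: the citation is shorter and covers arbitrary complexes in one stroke, while your proof stays entirely within the paper's own toolkit (the Examples, \eqref{e:coho-hyper}, \eqref{e:exact-complexes}), makes the isomorphism explicit, and is arguably better suited to the paper's computational spirit; it also quietly reproves the kernel--cokernel long exact sequence that underlies the identification \eqref{e:ATT'} later on.
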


\begin{proof} See, for instance, \cite[Proposition VII(5.2)]{Brown}. \end{proof}

\begin{examples}
\begin{enumerate}
\item $\H^k(0\to A_0)=\Hr^k\hm A_0$.
     Hence, if a homomorphism $A_{1}\into A_0$ is injective, then by Lemma \ref{l:quasi} we have
    \[\H^k(A_{1}\into A_0)\,\cong\,\Hr^k\hs\hs\coker[ A_1\into A_0].\]
\item  $\H^k(A_{1}\to 0)=\Hr^{k+1}\hm A_{1}$.
     Hence, if a homomorphism $ A_{1}\onto A_0$ is surjective, then by Lemma \ref{l:quasi} we have
     \[\H^k(A_{1}\onto A_0)\,\cong\,\Hr^{k+1}\ker\hs[ A_1\onto A_0].\]
\end{enumerate}
\end{examples}

\begin{subsec}
Let
\[0\to A\labelto{i} B\labelto{j} C\to 0\]
be a short exact sequence of $\Ga$-modules,
where we identify $A$ with $i(A)\subseteq B$.
Then by Lemma \ref{l:quasi}, the quasi-isomorphism
\[ (A\to 0)\,\to\, (B\to C)\]
induces isomorphisms
\[ \Hr^{k+1}\hm A\labelto\sim \H^k(B\to C),\quad [a]\mapsto [a,0].\]
\end{subsec}

\section{Galois cohomology of tori and quasi-tori}
\label{s:Tori-quasi-tori}

\begin{subsec}
Let $\TT$ be an $\R$-torus.
Consider the {\em cocharacter group}
\[ \X_*(\TT)=\Hom(\GmC,T)\]
where $\GmC$ is the multiplicative group over $\C$.
The group $\Gamma=\{1,\gamma\}$ acts on $\X_*(\TT)$ by
\[(\upgam\nu)(z)=\upgam(\nu(\hs^{\gamma^{-1}}\hm z))
     \ \ \text{for } \nu\in\X_*(\TT),\ z\in\C^\times\]
(where in our case $\gamma^{-1}=\gamma$).
We see that $\X_*(\TT)$  is a $\Gamma$-lattice, that is,
a finitely generated free abelian group with $\Gamma$-action.

Let $L$ be a $\Gamma$-lattice.
We say that $L$ is {\em indecomposable} if it is not a direct sum
 of its two nonzero $\Gamma$-sublattices.
Clearly, any $\Gamma$-lattice is a direct sum of indecomposable lattices.
\end{subsec}

\begin{proposition}\label{p:indecomp}
Up to isomorphism, here are exactly three indecomposable $\Gamma$-lattices:
\begin{enumerate}
\item $\Z$ with trivial action of $\gamma$;
\item $\Z$ with the action of $\gamma$ by $-1$;
\item $\Z\oplus \Z$ with the action of $\gamma$ by the matrix \SmallMatrix{0&1\\1&0}.
\end{enumerate}
\end{proposition}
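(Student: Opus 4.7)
My plan is to reduce the classification to linear algebra over $\F_2$ after first extracting the $\pm$-eigenlattices. I would begin by briefly checking that the three listed lattices are pairwise non-isomorphic and each indecomposable. Rank distinguishes (3) from the other two; within rank $1$, the $\gamma$-action distinguishes (1) from (2); and case (3) is indecomposable because any splitting as a sum of a type-(1) and a type-(2) summand would force $L$ to equal its proper sublattice $\Z(e_1+e_2)\oplus \Z(e_1-e_2)$, which has index $2$.

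For the main content, given an arbitrary $\Gamma$-lattice $L$, set $L_+ = L^\Gamma$ and $L_- = \ker(1+\gamma|_L)$. Since $L_+\cap L_- = 0$, the sum $L_{+,-}\coloneqq L_+\oplus L_-$ is internal in $L$. The identity $2\ell = (1+\gamma)\ell + (1-\gamma)\ell$ shows $2L\subseteq L_{+,-}$, so $Q\coloneqq L/L_{+,-}$ is a finite $\F_2$-vector space. The key construction is the gluing map
\[\psi\colon Q \longrightarrow L_+/2L_+\,\oplus\,L_-/2L_-,\qquad
\bar\ell \longmapsto \bigl((1+\gamma)\ell \bmod 2L_+,\ (1-\gamma)\ell \bmod 2L_-\bigr),\]
which is well defined because $(1+\gamma)L_\pm\subseteq 2L_+$ and $(1-\gamma)L_\pm\subseteq 2L_-$. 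A short argument shows that each coordinate projection of $\psi$ is already injective on $Q$: if $(1+\gamma)\ell = 2a$ with $a\in L_+$, then $(1+\gamma)(\ell-a) = 0$, so $\ell-a\in L_-$ and hence $\ell\in L_{+,-}$. Consequently $\psi(Q)$ is the graph of an $\F_2$-linear isomorphism between subspaces $V_\pm\subseteq L_\pm/2L_\pm$ of common dimension $k\coloneqq\dim_{\F_2}Q$.

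Next, using the surjectivity of $\GL_r(\Z)\onto\GL_r(\F_2)$, I would choose $\Z$-bases $\tilde u_1,\ldots,\tilde u_{r_+}$ of $L_+$ and $\tilde v_1,\ldots,\tilde v_{r_-}$ of $L_-$ whose mod-$2$ reductions realize the isomorphism $\psi(Q)\cong V_+$ in diagonal form, so that $\psi(Q)$ is spanned by the pairs $(\tilde u_i \bmod 2L_+,\ \tilde v_i \bmod 2L_-)$ for $i\le k$. For each such $i$, after subtracting suitable elements of $L_+$ and $L_-$ from any lift, I obtain $\ell_i\in L$ with $(1+\gamma)\ell_i=\tilde u_i$ and $(1-\gamma)\ell_i = \tilde v_i$; in particular $2\ell_i = \tilde u_i + \tilde v_i$, and the cyclic $\Z[\Gamma]$-submodule $\Z[\Gamma]\ell_i$ is free of rank one, i.e., of type (3). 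A rank count together with projection to $V_+$ and $V_-$ then yields the internal direct sum
\[L \;=\; \bigoplus_{i=1}^{k}\Z[\Gamma]\ell_i\;\oplus\;\bigoplus_{j=k+1}^{r_+}\Z\tilde u_j\;\oplus\;\bigoplus_{j=k+1}^{r_-}\Z\tilde v_j,\]
whose summands are of types (3), (1) and (2) respectively. The main obstacle is the diagonalization step: verifying the injectivity of the coordinate projections of $\psi$ and then lifting the required $\F_2$-change of basis on $L_\pm/2L_\pm$ to an actual $\Z$-basis of $L_\pm$ via $\GL_r(\Z)\onto\GL_r(\F_2)$; once that is in place, the decomposition is pure bookkeeping.
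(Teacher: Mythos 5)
Your proof is correct, but it takes a genuinely different route from the paper. The paper's proof of this proposition is the recursive argument of Subsection \ref{lat:3} (following \cite[Appendix A]{BT*}, with \cite{CR} and \cite{Casselman} as alternative references): one splits off a $\gamma$-fixed vector spanning a pure rank-one sublattice, decomposes the quotient by induction on the rank, lifts the resulting basis and corrects the lifted vectors case by case; this inductive shape is exactly what becomes the algorithm implemented there. You instead argue in one shot: extract the eigenlattices $L_\pm$, note that $Q=L/(L_+\oplus L_-)$ is an $\F_2$-space whose image under your gluing map $\psi$ is the graph of an isomorphism between subspaces $V_\pm\subseteq L_\pm/2L_\pm$, diagonalize over $\F_2$, lift the base change to $\Z$-bases via $\GL_n(\Z)\onto\GL_n(\F_2)$, and read off summands of types (1), (2), (3) from the elements $\ell_i$ with $(1+\gamma)\ell_i=\tilde u_i$, $(1-\gamma)\ell_i=\tilde v_i$. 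All the key steps check out (well-definedness and coordinate-injectivity of $\psi$, the lifting, and $\Z[\Gamma]\ell_i$ free of rank one); the gain of your approach is brevity and conceptual transparency, while the paper's recursion has the advantage of directly producing the adapted basis in the form used by the computer implementation. Two small points worth spelling out: in the final step a rank count alone only gives a finite-index sublattice, so you should note that $\tilde u_i=\ell_i+\gamma\ell_i$ and $\tilde v_i=\ell_i-\gamma\ell_i$ lie in $\Z[\Gamma]\ell_i$ and use the injectivity of $\psi$ to conclude that the asserted sum is all of $L$ (directness then follows by applying $1\pm\gamma$); and in the indecomposability of type (3) you should also dismiss splittings into two summands of the same rank-one type, which is immediate since $\gamma\neq\pm\operatorname{id}$ there.
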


\begin{proof}
See Curtis and Reiner \cite[Theorem (74.3)]{CR}.
See also Casselman \cite[Theorem 2]{Casselman},  \cite[Appendix A]{BT*}, and Subsection \ref{lat:3} below for  elementary proofs.
\end{proof}

\begin{subsec}
Let $\TT$ be an $\R$-torus; then $\X_*(\TT)$
is a $\Gamma$-lattice.
Let $\vphi \colon \TT\to\SS$ be a homomorphism of $\R$-tori;
then $\vphi_*\colon\X_*(\TT)\to\X_*(\SS)$ is a homomorphism of $\Gamma$-lattices.
In this way we obtain an equivalence between the category
of $\R$-tori and the category of $\Gamma$-lattices.

We say that an $\R$-torus is {\em indecomposable}
if it is not a direct product of two nontrivial subtori.
Clearly, every $\R$-torus is a direct product of indecomposable $\R$-tori.
It is also clear that a torus $\TT$ is indecomposable
if and only if its cocharacter lattice $\X_*(\TT)$ is indecomposable.
\end{subsec}

\begin{corollary}[of Proposition \ref{p:indecomp};
see, for instance, Voskresenski\u{\i} {\cite[Section 10.1]{Vos}}]
\label{c:indecoposable-tori}
Up to isomorphism, there are exactly three indecomposable $\R$-tori:
\begin{enumerate}
\item $\GmR=(\C^\times,\, z\mapsto \bar z)$ with group of $\R$-points $\R^\times$;
\item $R_{\C/\R}^{(1)}\GmC=(\C^\times,\, z\mapsto\bar z ^{-1})$  with group of $\R$-points $$U(1)=\{z\in\C^\times\mid z\bar z=1\};$$
\item $R_{\C/\R}\GmC=(\hs\C^\times\times\C^\times,\, (z_1,z_2)\mapsto
     (\bar z_2,\bar z_1)\hs)$ with group of $\R$-points $\C^\times$.
\end{enumerate}
\end{corollary}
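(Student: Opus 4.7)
The plan is to deduce the corollary from Proposition \ref{p:indecomp} via the contravariant-free equivalence between the category of $\R$-tori and the category of $\Gamma$-lattices given by the cocharacter functor $\TT\rightsquigarrow \X_*(\TT)$. Under this equivalence, a direct product decomposition $\TT\cong\TT_1\ti\TT_2$ corresponds to a direct sum decomposition $\X_*(\TT)\cong\X_*(\TT_1)\oplus\X_*(\TT_2)$. Hence $\TT$ is indecomposable as an $\R$-torus if and only if $\X_*(\TT)$ is indecomposable as a $\Gamma$-lattice, as is indeed noted immediately before the corollary. Combined with Proposition \ref{p:indecomp}, this tells us that there are exactly three isomorphism classes of indecomposable $\R$-tori, one for each of the three indecomposable $\Gamma$-lattices.

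It then remains only to identify a representative torus for each of the three cocharacter lattices $(\Z, {\rm triv})$, $(\Z,-1)$, and $(\Z^2, \mathrm{swap})$. First, $\GmR$ is split, so $\X_*(\GmR)=\Z$ with trivial $\Gamma$-action, matching case (1). For the third case, Weil restriction gives $\X_*(R_{\C/\R}\GmC)\cong\Z[\Gamma]\cong\Z\oplus\Z$ with $\gamma$ permuting the two summands by the standard description of cocharacters of a Weil restriction; this matches case (3), and one checks the $\Gamma$-action on $\C^\times\times\C^\times$ of the stated form $(z_1,z_2)\mapsto(\bar z_2,\bar z_1)$ by tracing through the convention $(\upgam\nu)(z)=\upgam(\nu(\hs^{\gamma^{-1}}\hm z))$.

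For the second case, I would use the norm short exact sequence of $\R$-tori
\[1\to R_{\C/\R}^{(1)}\GmC\to R_{\C/\R}\GmC\labelto{\Nm} \GmR\to 1,\]
which under $\X_*$ yields the short exact sequence of $\Gamma$-lattices
\[0\to \X_*(R_{\C/\R}^{(1)}\GmC)\to \Z\oplus\Z\labelto{(1,1)}\Z\to 0.\]
The kernel is the antidiagonal $\{(a,-a)\mid a\in\Z\}\subset\Z[\Gamma]$, on which $\gamma$ acts by $-1$; this matches case (2). One then verifies that this torus, presented as $(\C^\times, z\mapsto \bar z^{-1})$, indeed has real points $U(1)$, as listed.

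The only technical point needing a moment's care is matching the sign convention on the $\Gamma$-action on cocharacters with the anti-regular involution describing the torus as $(T,\sigma_\gG)$; once this is sorted, all three identifications are immediate and no further obstacle remains.
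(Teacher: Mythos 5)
Your argument is correct and is essentially the route the paper intends: the corollary is deduced from Proposition \ref{p:indecomp} via the equivalence between $\R$-tori and $\Gamma$-lattices set up just before the statement, with the identification of the three representative tori left to a reference (Voskresenski\u{\i}). Your explicit matching of the lattices $(\Z,\mathrm{triv})$, $(\Z,-1)$, $(\Z^2,\mathrm{swap})$ with $\GmR$, the norm-one torus (via the norm exact sequence), and $R_{\C/\R}\GmC$ correctly fills in that cited detail, so no gap remains.
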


We have a canonical $\Gamma$-equivariant {\em evaluation homomorphism}
\begin{equation}\label{e:e-X*-T}
e\colon \X_*(\TT)\to \TT(\C),\quad  \nu\mapsto \nu(-1)\ \ \text{for }\nu\in \X_*(\TT).
\end{equation}

\begin{theorem}[\hs{\cite[Theorem 3.6]{BT*}}\hs]
\label{p:X*}
Let $\TT$ be an $\R$-torus. For any $k\in \Z$, the  homomorphism
\begin{equation*}
 e_*\colon \Hr^k\hs\hs\X_*(\TT)\to \Hr^k\hs \TT
 \end{equation*}
induced by the $\Gamma$-equivariant homomorphism \eqref{e:e-X*-T} is an isomorphism.
\end{theorem}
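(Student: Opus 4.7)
The plan is to reduce the statement to the case of indecomposable tori by additivity, and then to verify the three resulting cases by direct computation.

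First, I would observe that both sides of the proposed isomorphism are additive functors on $\R$-tori. Indeed, if $\TT = \TT'\times\TT''$, then $\X_*(\TT) = \X_*(\TT')\oplus\X_*(\TT'')$ as $\Gamma$-lattices and $\TT(\C) = \TT'(\C)\times\TT''(\C)$ as $\Gamma$-modules, so $\Hr^k\hs\X_*(\TT) = \Hr^k\hs\X_*(\TT')\oplus\Hr^k\hs\X_*(\TT'')$ and similarly for $\Hr^k\hs\TT$, with $e_*$ splitting as a product. By Corollary \ref{c:indecoposable-tori}, every $\R$-torus is a product of indecomposable tori, and there are exactly three isomorphism classes of indecomposable $\R$-tori. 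So it suffices to check the three cases listed there.

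In each case I would write out the two Tate cohomology groups explicitly for the parities $k$ even and $k$ odd (recalling that they are $2$-periodic), using the formulas $\Zr^k = \ker d^k$ and $\Br^k = \im d^{k-1}$ from Section \ref{s:Abelian}. For $\TT = \GmR$ with $\X_*(\TT) = \Z$ (trivial action) and $\TT(\C) = \C^\times$ with complex conjugation, a direct computation gives $\Hr^k\hs\X_*(\TT) = \Z/2\Z$ for $k$ even and $0$ for $k$ odd, and $\Hr^k\hs\TT = \R^\times/\R^{>0} \cong \Z/2\Z$ for $k$ even and $\Hr^1\hs\TT = 0$ by Hilbert 90 for $k$ odd; the map $e$ sends the generator $1 \in \Z$ to $-1$, which represents the nontrivial class in $\R^\times/\R^{>0}$, so $e_*$ is an isomorphism. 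For $\TT = R_{\C/\R}^{(1)}\GmC$ with $\X_*(\TT) = \Z$ (action by $-1$) and $\TT(\C) = \C^\times$ with $\sigma(z) = \bar z^{-1}$, the parities swap: $\Hr^k\hs\X_*(\TT) = \Z/2\Z$ for $k$ odd and $0$ for $k$ even, and similarly on the torus side one computes $\Zr^k_{\text{odd}}\hs\TT = \R^\times$ with $\Br^k_{\text{odd}}\hs\TT = \R^{>0}$, yielding $\Z/2\Z$; again $e(1) = -1 \in \R^\times$ is nontrivial modulo $\R^{>0}$, so $e_*$ is an isomorphism. For $\TT = R_{\C/\R}\GmC$ with $\X_*(\TT) = \Z\oplus\Z$ (swap action) and $\TT(\C) = \C^\times\times\C^\times$ (swap-conjugate), a routine check shows $\Zr^k = \Br^k$ on both sides for each parity, so both Tate groups vanish and the statement is trivial; this is a manifestation of Shapiro's lemma for the induced module.

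The computation is almost entirely mechanical; the only point that requires a little care is verifying that the map $e\colon \nu\mapsto\nu(-1)$ actually hits a cocycle representing the nontrivial class in the first two cases (rather than landing in a coboundary). This is the place where one must be careful with the identification of $\nu\in\X_*(\TT)$ with an integer and with the sign of $-1 \in \R^\times\subset\C^\times$. Once this is checked, the theorem follows by combining the three cases via the additivity argument above.
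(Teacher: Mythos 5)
Your argument is correct, but it is not the route the paper takes: the paper offers no proof at all for Theorem \ref{p:X*}, simply delegating it to \cite[Theorem 3.6]{BT*}, so what you have written is a self-contained alternative rather than a reconstruction of an argument in the text. Your reduction is legitimate and uses only facts the paper itself records: additivity of Tate cohomology under direct sums of $\Gamma$-modules, compatibility of the evaluation map $e$ with product decompositions (immediate from its definition in \eqref{e:e-X*-T}), the equivalence between $\R$-tori and $\Gamma$-lattices, and the classification of indecomposables in Proposition \ref{p:indecomp} and Corollary \ref{c:indecoposable-tori}. The three case computations check out: for $\GmR$ one gets $\Z/2\Z$ in even degrees on both sides with $e(1)=-1\notin\R^{>0}$ hitting the nontrivial class; for $R^{(1)}_{\C/\R}\GmC$ the parities swap, with $\Zr^{\mathrm{odd}}=\R^\times$, $\Br^{\mathrm{odd}}=\{(z\bar z)^{-1}\}=\R^{>0}$ on the torus side and again $e(1)=-1$ nontrivial (you leave the even-degree torus computation implicit, but it is the surjectivity of $z\mapsto z/\bar z$ onto $U(1)$, which is fine); and for $R_{\C/\R}\GmC$ both sides vanish by the induced-module/Shapiro observation. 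What your approach buys is an elementary, fully explicit verification at the level of this paper, at the cost of invoking the classification of indecomposable $\Gamma$-lattices; the citation to \cite{BT*} keeps the present paper shorter and independent of that case analysis, but gives the reader no argument to inspect here.
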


\begin{subsec}
Let $\TT_{1}\labelto{\partial}\TT_0$ be a short complex of $\R$-tori.
Consider the short complex of $\Gamma$-modules
$\X_*(\TT_{1})\labelto{\partial_*}\X_*(\TT_0)$.
Formula \eqref{e:e-X*-T} permits us to define
a morphism of short complexes of $\Gamma$-modules
\begin{equation}\label{e-quasi-tori}
\big(\hs\X_*(\TT_{1})\to\X_*(\TT_0)\hs\big)\,\to\,
     \big(\hs\TT_{1}(\C)\to\TT_0(\C)\hs\big),
\end{equation}
which in general is not a quasi-isomorphism.
\end{subsec}

\begin{proposition}[\hs{\cite[Proposition 3.12]{BT*}}\hs]
\label{p:quasi-tori}
The morphism of short complexes \eqref{e-quasi-tori}
induces isomorphisms on hyper\-cohomology
\[\H^k\big(\hs\X_*(\TT_{1})\hm\to\hm\hm\X_*(\TT_0)\hs\big)\,
       \labelto\sim\, \H^k(\TT_{1}\hm\to\hm\hm\TT_0),\quad\ \
       [\nu_1,\nu_0]\mapsto[\nu_1(-1),\nu_0(-1)].\]
\end{proposition}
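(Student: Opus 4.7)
My plan is to deduce this from Theorem \ref{p:X*} (the torus case, $k$th Tate cohomology) by a five lemma argument applied to the long exact sequence \eqref{e:coho-hyper} attached to a two-term complex.

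More precisely, I would first apply the construction of \eqref{e:coho-hyper} to each of the two short complexes. For the complex of $\Gamma$-modules $\big(\X_*(\TT_1)\labelto{\partial_*}\X_*(\TT_0)\big)$, using the short exact sequence of complexes $0\to(0\to\X_*(\TT_0))\to(\X_*(\TT_1)\to\X_*(\TT_0))\to(\X_*(\TT_1)\to 0)\to 0$, I get a six-term (and periodic) exact sequence
\[
\dots\to \Hr^k\X_*(\TT_1)\to \Hr^k\X_*(\TT_0)\to \H^k(\X_*(\TT_1)\to\X_*(\TT_0))\to \Hr^{k+1}\X_*(\TT_1)\to \Hr^{k+1}\X_*(\TT_0)\to \dots
\]
and the analogous sequence for $(\TT_1\to\TT_0)$. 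The morphism \eqref{e-quasi-tori} is a morphism of short exact sequences of complexes, so naturality of the connecting homomorphism $\delta^k$ (recalled explicitly just after \eqref{e:exact-complexes}) yields a commutative ladder between these two long exact sequences, in which the vertical arrows on the $\Hr^k\X_*(\TT_i)\to \Hr^k\TT_i$ spots are the evaluation maps $e_*$.

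By Theorem \ref{p:X*} each such evaluation map $e_*$ is an isomorphism in every degree. The five lemma then forces the remaining vertical arrow $\H^k(\X_*(\TT_1)\to\X_*(\TT_0))\to \H^k(\TT_1\to\TT_0)$ to be an isomorphism, which is the statement to prove. The explicit formula $[\nu_1,\nu_0]\mapsto[\nu_1(-1),\nu_0(-1)]$ is just the definition of the map on cocycles induced by \eqref{e:e-X*-T}, so nothing more is needed once the isomorphism is established.

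The only point that takes a moment's care is the commutativity of the squares involving the connecting maps $\delta^k$, i.e.\ verifying that evaluation at $-1$ intertwines the two boundary maps. This is routine from the definition $\delta^k[c_1,c_0]=[D^k(b_1,b_0)]$: a lift $(b_1,b_0)$ in $\X_*(\TT_1)\oplus\X_*(\TT_0)$ maps to the lift $(b_1(-1),b_0(-1))$ of $(c_1(-1),c_0(-1))$, and the differential $D^k$ is computed entrywise from $d^k$ and $\partial$, both of which commute with the $\Gamma$-equivariant evaluation map. So I do not expect any real obstacle here; the argument is a pure functoriality plus five lemma package built on top of Theorem \ref{p:X*}.
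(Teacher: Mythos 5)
Your proposal is correct. The paper itself gives no in-text argument for Proposition \ref{p:quasi-tori} — it simply cites \cite[Proposition 3.12]{BT*} — so your write-up supplies the proof that the paper outsources, and it is the natural one: the evaluation maps $\X_*(\TT_i)\to\TT_i(\C)$, $\nu\mapsto\nu(-1)$, are $\Gamma$-equivariant and commute with $\partial$, hence \eqref{e-quasi-tori} induces a morphism between the two short exact sequences of complexes $0\to(0\to A_0)\to(A_1\to A_0)\to(A_1\to 0)\to 0$, and therefore a commutative ladder between the two periodic exact sequences of the form \eqref{e:coho-hyper}; Theorem \ref{p:X*} makes the four outer vertical arrows isomorphisms in every degree, and the five lemma finishes. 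Your check of the only delicate point — that evaluation at $-1$ intertwines the connecting maps, with the sign $(-1)^{k+1}$ harmless because all classes are killed by $2$ (Lemma \ref{l:2-xi}) — is exactly what is needed, and the explicit formula on classes is indeed immediate from the definition of the induced map on cocycles. This is essentially the argument of the cited reference, so there is no gap and no genuinely different route to compare.
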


\begin{subsec}
\label{ss:Hk-quasi}
Following Gorbatsevich, Onishchik, and Vinberg \cite[Section 3.3.2]{GOV},
we say that a {\em quasi-torus} over $\R$
is a commutative algebraic $\R$-group $\AA$
such that all elements of $\AA(\C)$ are semisimple.
In other words, $\AA$ is an $\R$-group of multiplicative type;
see Milne \cite[Corollary 12.21]{Milne-AG}.
In other words, $\AA$ is an $\R$-subgroup of some $\R$-torus $\TT$;
see, for instance, \cite[Section 2.2]{BGR}.

Set $\TT'=\TT/\AA$; then we have a short exact sequence
\begin{equation*}
1\to\AA\to\TT\to\TT'\to 1,
\end{equation*}
whence we obtain a quasi-isomorphism
\[(\AA\to 1)\hs\to\hs(\TT\to\TT')\]
and an isomorphism
\begin{equation}\label{e:ATT'}
\H^{k-1}\big(\X_*(\TT)\to\X_*(\TT')\big)\labelto{\sim}
     \H^{k-1}(\TT\to\TT')\labelto{\sim} \Hr^k\hm\AA.
\end{equation}
which computes the Galois cohomology of a quasi-torus $\AA$ in terms of lattices.

We write an explicit formula for the isomorphism \eqref{e:ATT'}.
Let
\[(\nu,\nu')\in \Zr^{k-1}\big(\X_*(\TT)\to\X_*(\TT')\big).\]
Consider
\[\big(\nu(-1),\nu'(-1)\big)\in \Zr^{k-1}(\TT\to\TT').\]
Choose  a preimage $t\in T$ of $\nu'(-1)\in T'$ and set
\begin{equation}\label{e:nu-nu'-a}
a=\nu(-1)\cdot d^{k-1}(t^{(-1)^k}).
\end{equation}
Then $(a,1)\sim \big(\nu(-1),\nu'(-1)\big)$, and therefore
$[a]\in \Hr^k\AA$ is the image of $[\nu,\nu']$ under the isomorphism \eqref{e:ATT'}.
\end{subsec}

\section{Nonabelian $\Hr^1$ for $\Gamma$-groups and $\R$-groups}
\label{s:Nonab-abstract}

In this section we consider the cohomology of a group $\Gamma$ of order 2.
See Serre's book \cite[Section I.5]{Serre}
for the case of an arbitrary profinite group $\Gamma$.

\begin{subsec} \label{ss:d-H1}
Let $A$ be a $\Ga$-group (written multiplicatively),
that is, a group (not necessarily abelian) endowed with an action of $\Ga$.
We consider the first cohomology $\Hr^1(\Ga,A)$.
We write $\Hr^1\hm A$ for $\Hr^1(\Ga, A)$. Recall that
\[ \Hr^1\hm A=\Zr^1\hm A/\sim,\quad\ \text{where}\
      \quad \Zr^1\hm A=\{a\in A\mid a\cdot\upgam a=1\},\]
and two 1-cocycles (elements of $\Zr^1\hm A$ ) $a_1,\,a_2$ are equivalent
(we write $a_1\sim a_2$) if there exists $a'\in A$ such that
\[a_2=a'\cdot a_1 \cdot (\upgam a')^{-1}.\]
The set $\Hr^1\hm A$ has a canonical {\em neutral element} $[1]$,
the class of the cocycle $1\in \Zr^1\hm A$.
The correspondence $A\rightsquigarrow \Hr^1\hm A$ is a functor
from the category of $\Gamma$-groups to the category of pointed sets.

If the group $A$ is abelian, then
\[\Hr^1\hm A=\Zr^1\hm A/\Br^1\hm A,\]
where the abelian subgroups $\Zr^1\hm A$ and $\Br^1\hm A$
were defined as in Subsection \ref{ss:H1-abelian}.
Thus $\Hr^1\hm A$ is naturally an abelian group.
\end{subsec}

\begin{construction}
Let
\begin{equation}\label{e:ABC}
 1\to A\labelto{i} B\labelto{j} C\to 1
 \end{equation}
be a short exact sequence of  $\Gamma$-groups.
Then we have a cohomology exact sequence
\[
1\to A^\Ga\labelto{i} \Br^\Ga\labelto{j} C^\Ga\labelto{\delta}
      \Hr^1\hm A\labelto{i_*} \Hr^1 B\labelto{j_*} \Hr^1 C;
\]
see Serre \cite[I.5.5, Proposition 38]{Serre}.
We recall the definition of the map $\delta$.

Let $c\in C^\Gamma$. We lift $c$ to an element $b\in B$
and set $a=b^{-1}\cdot\upgam b\in B$.
It is easy to check that in fact $a\in\Zr^1\hm A\subseteq A$.
We set $\delta(c)=[a]\in \Hr^1\hm A$.
\end{construction}

\begin{construction}
Assume that  the normal  subgroup $i(A)$ of $B$ in \eqref{e:ABC}  is {\em central} in $B$.
Then we have a cohomology exact sequence
\[
1\to A^\Ga\labelto{i} \Br^\Ga\labelto{j} C^\Ga\labelto{\delta}
   \Hr^1\hm A\labelto{i_*} \Hr^1 B\labelto{j_*} \Hr^1 C \labelto{\delta^1} \Hr^2 A;
\]
see Serre \cite[I.5.7, Proposition 43]{Serre}.
We recall the definition of the map $\delta^1$
in our case of the group $\Gamma$ of order 2.

Let  $c\in \Zr^1\hs C\subseteq C$; then $c\upgam\hm c=1$.
We lift $c$ to some element $b\in B$; then $b\hs\upgam\hs b\in A$.
We set $a=b\hs\upgam b$.
We have $\upgam a=\upgam\hs b\cdot b$.
Since $\upgam a\in A$ and $A$ is central in $B$, we have
\[\upgam a=\upgam\hs b^{-1}\hs \upgam a\hs \upgam\hs b=
      \upgam\hs b^{-1}\hs\upgam\hs b\hs\hs b\hs \upgam\hs b=b\hs\upgam b=a.\]
Thus  $a\in \Zr^2A$, and we set
$\delta^1[c]=[a]=[b\hs\upgam\hs b]\in \Hr^2\hm A$.
\end{construction}

Note that when the groups $A$, $B$, and $C$ are abelian,
we have $\Zr^0\hs C=C^\Gamma$ and
$\delta(c)=\delta^0[c]$ for $c\in C^\Gamma$,
where $\delta^0\colon \Hr^0\hs C\to \Hr^1\hm A$
is the map of Section \ref{ss:ABC-abelian}.
Moreover, then our map $\delta^1$ coincides
with the map $\delta^1$ of Subsection \ref{ss:ABC-abelian}.

\begin{notation}
Let $\GG$ be an algebraic $\R$-group, not necessarily abelian. We write
\[\Hr^1\hs\GG=\Hr^1(\R,\GG)\coloneqq \Hr^1(\Gamma,\GG(\C)).\]
\end{notation}

\begin{subsec}
Let
\begin{equation*}
 1\to \AA\labelto{i} \BB\labelto{j} \CC\to 1
 \end{equation*}
be a short exact sequence of real algebraic groups
(not necessary linear or connected).
Then we have a short exact sequence of $\Gamma$-groups
\[ 1\to \AA(\C)\labelto{i} \BB(\C)\labelto{j} \CC(\C)\to 1,\]
whence we obtain a cohomology exact sequence
\begin{equation*}
1\to \AA(\R)\labelto{i} \BB(\R)\labelto{j} \CC(\R)\labelto{\delta}
     \Hr^1\hm\AA\labelto{i_*^1} \Hr^1\BB\labelto{j_*^1} \Hr^1\CC.
\end{equation*}
\end{subsec}

\section{Algorithms for Levi decompositions}\label{app:B}

Let $G\subset \GL(n,\C)$ be a connected complex algebraic group. Then its
Lie algebra $\g$ decomposes as a direct sum of subalgebras
\begin{equation}\label{eq:levidec}
  \g = \sg \oplus \tg\oplus \ng, \text{ where }
\end{equation}
\begin{itemize}
\item $\sg$ is semisimple,
\item $\tg$ is the Lie algebra of a torus,
\item $\ng$ is an ideal of $\g$ consisting of nilpotent elements,
\item $\rg=\tg\oplus \ng$ is the solvable radical of $\g$, and $\ng$ is the
  set of nilpotent elements of $\rg$,
\item $[\sg,\tg]=0$ and $\sg\oplus \tg$ is reductive.
\end{itemize}

See \cite[Section 4.3.3]{wdg} for a proof and algorithms to compute bases
of the subalgebras $\sg$, $\tg$ and $\ng$. Note that $\ng$ is uniquely
determined, but $\sg$ and $\tg$ are not. However, if we have a second
decomposition $\g=\sg'\oplus\tg'\oplus\ng$, then there is an element  $g\in G$ with
$g\sg g^{-1} = \sg'$, $g\tg g^{-1} = \tg'$.
In this section we discuss further algorithmic
problems related to this type of decomposition. Our main algorithms are
contained in Subsections \ref{sec:B3} and \ref{sec:B5}. The other subsections
have solutions to other algorithmic problems which are necessary for the
main algorithms.

The methods of this section are necessary for solving Problems
\ref{mp:1}, \ref{mp:2} for non-reductive groups. We briefly indicate how this
works. Let $\GG$ be a connected non-reductive $\R$-group with Lie algebra $\g$.
Let $\g =\sg\oplus\tg\oplus\ng$ be a decomposition as above. Then
$\sg\oplus \tg$ is the Lie algebra of a maximal connected reductive subgroup
$\GG^{(r)}$ of $\GG$. Our algorithms for the reductive case take this Lie algebra
as input and  so we can compute $\Hr^1 \GG^{(r)}$. Now Sansuc's lemma
(see Proposition \ref{p:Sansuc}) gives a  bijection between
$\Hr^1 \GG^{(r)}$ and $\Hr^1 \GG$.

In our solution to Problem \ref{mp:2} for
$\GG$ we use a reduction to the same problem for $\GG^{(r)}$
(see Subsection \ref{sub:pb02nonred}). To make this
work, we need an explicit homomorphism $\pi^{(r)}\colon \GG \to \GG^{(r)}$. In
Subsection \ref{sec:B5} we show how to construct such a homomorphism.

There is one more application in our paper of the algorithms in the present
section. When solving Problem \ref{mp:1} for a non-connected group, we need to
neutralize a 2-cocycle. In order to do that in the non-reductive case,
we need to be able to explicitly find an element conjugating two
maximal reductive subalgebras of the Lie algebra, see Subsection \ref{sub:2coc}.
In Subsection \ref{sec:B3} we show how to find such a conjugating element.

\begin{subsec}\label{sec:B1}
Let $\bg$ be a solvable Lie algebra over a field of characteristic 0.
Let $\hg_1$, $\hg_2$ be two Cartan subalgebras of $\bg$. Here we describe
an algorithm for finding $x_1,\ldots, x_k\in [\bg,\bg]$ such that
$\exp( \ad x_1)\cdots \exp(\ad x_k)(\h_1) = \h_2$. The algorithm is inspired
by the proof of the existence of such elements, as given for example in
\cite[Theorem 3.6.4]{wdg1}.

For this we need the technical tool of the Fitting decomposition. Let $\ag$ be
a Lie algebra with nilpotent subalgebra $\hg$. Then we define
$$\ag_0(\h) = \{ x\in \ag \mid \text{ for all $y\in \hg$
    there is an integer $t>0$ such that } (\ad y)^t(x)=0\}.$$
Also define $\h^0 (\ag) = \ag$ and $\hg^{i+1}(\ag) = [\hg, \hg^i(\ag)]$. Then
there is $i$ with $\hg^i(\ag) = \hg^{i+1}(\ag)$. We set $\ag_1(\h) =
\hg^i(\ag)$. Then
$$\ag = \ag_0(\h)\oplus \ag_1(\h)$$
(\cite[Section 3.1]{wdg1}, \cite[Chapter II, Theorem 4]{jac}), which is
called the {\em Fitting decomposition} of $\ag$ relative to $\hg$. We have
that both summands are stable under $\ad h$ for $h\in \ag_0(\h)$
(\cite[Proposition 3.2.5]{wdg1}, \cite[Chapter III, Proposition 2]{jac}).
Now suppose that $\hg$ is a Cartan subalgebra of $\ag$. Then $\ag_0(\h)=\h$
(\cite[Section 3.2]{wdg1}, \cite[Chapter III, Proposition 1]{jac}), and so the
Fitting decomposition is $\ag = \hg\oplus \ag_1(\h)$.

We can also consider the 1-dimensional subalgebra $\langle x\rangle$ spanned
by an element $x\in \ag$. Then we write $\ag_0(x) = \ag_0(\langle x\rangle)$.
An element $x\in \ag$ is called {\em regular} if the dimension of $\ag_0(x)$ is
minimal. In this case $\ag_0(x)$ is a Cartan subalgebra of $\ag$
(\cite[Corollary 3.2.8]{wdg1}, \cite[Chapter III, Theorem 1]{jac}).
Let $x\in \ag$ be regular, and set $\h=
\ag_0(x)$. Then $\ag_1(\h)$ is stable under $\ad x$, and $\ad x$ has no
eigenvalue 0 on $\ag_1(\h)$. So the restriction of $\ad x$ to
$\ag_1(\h)$ is bijective. Furthermore, every Cartan subalgebra $\h$ of
$\ag$ contains regular elements, that is, elements $x\in \h$ such that
$\h=\ag_0(x)$ (\cite[Proposition 3.5.2]{wdg1}). The proof of that proposition
also gives a straightforward algorithm to find a regular element in $\h$.
This works as follows. Let $h_1,\ldots,h_\ell$ be a basis of $\h$.
Write $n=\dim \ag$, then there is a homogeneous polynomial $g$ (which we do
not know) of degree $n-\ell$ in $\ell$ indeterminates, such that $\sum_i
c_i h_i$ is regular if and only if $g(c_1,\ldots,c_\ell)\neq 0$. Let $\Omega$
be a subset of the base field of size $2n$ and choose $(c_1,\ldots,c_\ell)\in
\Omega^\ell$ randomly and uniformly. Then by \cite[Corollary 1.5.2]{wdg1}
the probability that $\sum_i c_ih_i$ is {\em not} regular is less than
$\tfrac{1}{2}$. So after a few tries we expect to find a regular element.

Now let $\bg$ be a solvable Lie algebra and let $\h$ be a Cartan subalgebra
of $\bg$. We show that we can find an abelian ideal $\ig$ of $\bg$ such that
\begin{equation}\label{eq:abid}
  \h + \ig \subsetneq \bg \text{ or } \ig = \bg_1(\h).
\end{equation}
Indeed, let $\ig$ be any abelian ideal of $\bg$ (for example the last nonzero
term of the derived series of $\bg$). Suppose that $\h+\ig = \bg$, then we
claim that $\bg_1(\h)$ is an abelian ideal of $\bg$. For that, let $u\in \h$
be a regular element. Let $k$ be such that $(\ad u)^k(\h)=0$. Then, as $\ad u$
is nonsingular on $\bg_1(\h)$, we have $(\ad u)^k(\bg) = \bg_1(\h)$. But also
$(\ad u)^k (\bg) = (\ad u)^k(\h+\ig) \subset \ig$. In particular $\bg_1(\h)$
is abelian, and our claim follows. As a consequence we can replace
$\ig$ by $\bg_1(\h)$ and we have \eqref{eq:abid}.

We revert to the notation at the beginning of this subsection, and let
$\h_1$, $\h_2$ be two Cartan subalgebras of $\bg$. If $\h_1=\h_2$ then
there is nothing to do. So we assume that $\h_1\neq \h_2$ (in particular this
means that $\bg$ is not nilpotent). As shown above, we can find
an abelian ideal $\ig$ of $\bg$ with \eqref{eq:abid} where $\h=\h_2$.
Let $\varphi :
\bg \to \bg/\ig$ be the projection map. By \cite[Lemma 15.4A]{hum}
$\varphi(\h_1)$, $\varphi(\h_2)$ are Cartan subalgebras of $\bg/\ig$.

First suppose that $\ig = \bg_1(\h_2)$; then
$\bg/\ig$ is isomorphic to $\h_2$.
It follows that $\varphi(\h_1)=\varphi(\h_2)$. In particular, we have
$$\bg = \h_2 \oplus \bg_1(\h_2) = \h_1 \oplus \bg_1(\h_2).$$
Let $u\in \h_2$ be regular and write $u=h_1+y$ with $h_1\in \h_1$ and $y\in
\bg_1(\h_2)$. As $[u,\bg_1(\h_2)] = \bg_1(\h_2)$, there is $z\in \bg_1(\h_2)$
with $y=[z,u]$. Such an element $z$ can be found by solving a set of linear
equations. We claim that $z\in [\bg,\bg]$ and $\exp(\ad z) (\h_1) = \h_2$.
The first statement follows from  $[u,\bg_1(\h_2)] = \bg_1(\h_2)$.
Since $\bg_1(\h_2) = \ig$ is a commutative ideal, we have $(\ad z)^2=0$ and
$[z,y]=0$. Hence
$$\exp(\ad z) (h_1) = (1+\ad z)(u-y) = u-y+[z,u-y] = u.$$
Hence $\exp(\ad z)(\h_1)$ is a Cartan subalgebra of $\bg$ having a regular
element in common with $\h_2$, forcing $\exp(\ad z) (\h_1) = \h_2$.
So we are done in this case.

Secondly, suppose that $\h_2 + \ig \subsetneq \bg$. Then
by a recursive call we can find
$\bar x_1,\ldots, \bar x_m\in [\bg/\ig,\bg/\ig]$ such that
$\exp( \ad \bar x_1)\cdots \exp(\ad \bar x_m) (\varphi(\h_1)) = \varphi(\h_2)$.
Let $x_i\in [\bg,\bg]$ be such that $\varphi(x_i) = \bar x_i$\hs, and set
$\h_0 = \exp( \ad x_1)\cdots \exp(\ad x_m) (\h_1)$. Then $\varphi(\h_0) =
\varphi(\h_2)$ so $\h_0,\h_2$ are Cartan subalgebras of $\ag=\h_2+\ig$, which
is of smaller dimension than $\bg$. So by a recursive call we find
$y_1,\ldots,y_n \in [\ag,\ag]$ such that
$\exp(\ad y_1)\cdots \exp(\ad y_n) (\h_0)=\h_2$,
and we return $y_1,\ldots,y_n,x_1,\ldots,x_m$.
\end{subsec}

\begin{subsec}\label{sec:B2}
Here we let $\g$ be a Lie algebra over a field of characteristic 0. Let $\rg$ be its solvable
radical; then by the Levi-Malcev theorem there is a semisimple subalgebra $\sg$ of
$\g$ with $\g = \sg \oplus \rg$. Now let $\sg_1,\sg_2$ be two such subalgebras.
Here we describe an algorithm for finding an element $z\in [\g,\rg]$ such that
$\exp(\ad z)(\sg_1)=\sg_2$. The algorithm is inspired by the proof of the
existence of such an element given in \cite[\S 6, Theorem 5]{bou1}.
We distinguish a few cases.

In the first case we have $[\g,\rg]=0$. Then $\sg_1=\sg_2=[\g,\g]$ and there
is nothing to do.

In the second case we have $[\rg,\rg]=0$ and $[\g,\rg]=\rg$. Then for
$x\in \sg_1$ we write $x=x_2+x_r$ where $x_2\in \sg_2$, $x_r\in \rg$. Define
$h(x) = -x_r$. Then $h : \sg_1\to \rg$ is a linear map. In the proof of
\cite[\S 6, Theorem 5]{bou1} it is shown that there is  $a\in \rg$
with $h(x) = [a,x]$ for $x\in \sg_1$. Such an element $a$ can be found by solving
a system of linear equations. As $\rg=[\g,\rg]$ we have $a\in [\g,\rg]$.
Since $[\rg,\rg]=0$ it follows that $(\ad a)^2=0$. Hence for $x\in \sg_1$ we
see that $\exp(\ad a)(x) = x+[a,x] = x+h(x) =x_2\in \sg_2$. So the element
$a$ does the job.

In the third case we have $[\g,\rg]\neq 0$ and $[\rg,\rg]\neq 0$ or
$[\g,\rg]\neq \rg$. (That is, we are not in the one of the first two cases.)
As $[\g,\rg]$ is nilpotent, its center $\cg$ (which is an ideal of $\g$) is
nonzero. We have $\cg\neq \rg$ as otherwise $[\rg,\rg]=0$ and $[\g,\rg]=\rg$.
Set $\g'= \g/\cg$. The radical of $\g'$ is $\rg'=\rg/\cg$. Let $f :
\g\to \g'$ denote the projection. Then $f(\sg_1)$, $f(\sg_2)$ are semisimple
complements to $\rg'$. So by a recursive call we can find an element $a'\in
[\g',\rg']$ with $\exp( \ad a') f(\sg_1) = f(\sg_2)$. Let $a\in [\g,\rg]$ be
such that $f(a) =a'$ and set $\sg_0 = \exp(\ad a)(\sg_1)$. Set $\ag =
f^{-1}(\sg_2) = \sg_2+\cg$. Then $\ag$ is a proper subalgebra of $\g$
(that is, $\ag\subsetneq\g$)
with solvable radical $\cg$ and semisimple complement $\sg_2$. But also $\sg_0
\subset \ag$ and hence is a semisimple complement to $\cg$. So again by
a recursive call we can find an element $b\in [\ag,\cg]$ with $\exp( \ad b)(\sg_0)
=\sg_2$. As $a\in [\g,\rg]$ and $b\in \cg$ we have $[a,b]=0$ so that
with $z=a+b$ we get $\exp(\ad z) = \exp(\ad b)\exp(\ad a)$. The element
$z$ lies in $[\g,\rg]$ and we have $\exp(\ad z)(\sg_1) = \sg_2$.
\end{subsec}

\begin{subsec}\label{sec:B3}
Let $G\subset \GL(n,\C)$ be a connected algebraic group with Lie algebra
$\g\subset \gl(n,\C)$. Let $\g= \sg_1\oplus\tg_1\oplus \ng$ and
$\g=\sg_2\oplus\tg_2\oplus \ng$ be two decompositions of the form
\eqref{eq:levidec}. Here we show how to find $x_1,\ldots,x_k\in \ng$ such
that with $\psi = \exp(\ad x_1)\cdots \exp(\ad x_k)$ we have
$\psi(\sg_1) = \sg_2$ and $\psi(\tg_1)=\tg_2$.

First we use the algorithm of Subsection \ref{sec:B2} to find $x_k\in [\g,\rg]
\subset \ng$ such that $\exp(\ad x_k)(\sg_1) = \sg_2$. Set $\tg_0 =
\exp(\ad x_k)(\tg_1)$. Also set $\hat\g = \zg_{\g}(\sg_2)$, the centralizer of
$\sg_2$ in $\g$. Then $\hat\g = \tg_0 \oplus \zg_{\ng}(\sg_2) = \tg_2\oplus
\zg_{\ng}(\sg_2)$. As $\tg_0$, $\tg_2$ are commuting subalgebras consisting
of semisimple elements, they lie in Cartan subalgebras $\h_0$, $\h_2$ of
$\hat\g$; see \cite[Lemma 4.14.2]{wdg1}. So by the algorithm of
Subsection \ref{sec:B1} we can find $x_1,\ldots,x_{k-1}\in [\hat\g,\hat\g]\subset
\zg_{\ng}(\sg_2)$ with $\exp(\ad x_1)\cdots \exp(\ad x_{k-1})(\h_0)=\h_2$.
As $x_i\in \zg_{\ng}(\sg_2)$ we see that $\exp(\ad x_i) (\sg_2) = \sg_2$.
We have that $\tg_0$, $\tg_2$ are the sets of semisimple elements of $\h_0$, $\h_2$
respectively. Hence $\exp(\ad x_1)\cdots \exp(\ad x_{k-1})(\tg_0)=\tg_2$.
We conclude that $x_1,\ldots,x_k$ is a
sequence that we are looking for.

Also we remark that for a nilpotent $u\in \g$ we have $\exp(\ad u) (x) =
(\exp u) x (\exp u)^{-1}$. So if we set $g=\exp(x_1)\cdots \exp(x_k)$ then
$g\sg_1 g^{-1} = \sg_2$ and $g\tg_1g^{-1} = \tg_2$.
\end{subsec}

\begin{subsec}\label{sec:B4}
Let $G$, $\g$ be as in Subsection \ref{sec:B3}. Let $\g= \sg\oplus\tg\oplus \ng$
be a decomposition of the form \eqref{eq:levidec}. Here we show that given a
Cartan subalgebra $\h_0$ of $\g$ we can find a Cartan subalgebra $\h_{\sg}$ of
$\sg$, a Cartan subalgebra $\h$ of $\g$ such that $\h_{\sg}\oplus\tg\subset
\h$ and $x_1,\ldots,x_k\in \ng$ such that $\exp(\ad x_1)\cdots \exp(\ad x_k)
(\h_0)=\h$.

Let $\rg=\tg\oplus \ng$ be the solvable radical of $\g$. Let $f : \g\to
\g/\rg$ be the projection map. Then $f(\h_0)$ is a Cartan subalgebra of
$\g/\rg$ (\cite[Lemma 15.4A]{hum}). The restriction of $f$ to $\sg$ is
an isomorphism. Let $\h_\sg$ be the inverse image of $f(\h_0)$ in $\sg$.
Then $\h_\sg$ is a Cartan subalgebra of $\sg$. Write $\ug = \h_\sg\oplus \tg$,
and let $\ag = \zg_{\g}(\ug)$ be its centralizer in $\g$. Let $\h$ be a Cartan
subalgebra of $\ag$. By \cite[Theorem 4.4.4.8]{win} $\h$ is a Cartan subalgebra
of $\g$. It is clear that $\h$ contains $\ug$. We have $f(\h_0) =
f(\h_{\sg})\subset f(\h)$ and $f(\h)$ is a Cartan subalgebra of $\g/\rg$. It
follows that $f(\h) = f(\h_0)$. Set $\bg = \h+\rg$; then also $\h_0\subset\bg$.
Since $\bg/\rg$ is isomorphic to $\h_{\sg}$ which is commutative, it follows
that $[\bg,\bg]\subset \rg$ so that $\bg$ is solvable. Furthermore, $\h$ and
$\h_0$ are Cartan subalgebras of $\bg$. So by the algorithm of Subsection
\ref{sec:B1} we can find $x_1,\ldots,x_k\in [\bg,\bg]$ such that
$\exp(\ad x_1)\cdots \exp(\ad x_k)(\h_0)=\h$.

Finally, since $x_i \in [\g,\g]\cap \rg$, it follows that the elements $x_i$ are
nilpotent elements of $\rg$ (\cite[\S 5, Theorem 1]{bou1}). This in turn
implies that they lie in $\ng$.
\end{subsec}

\begin{subsec}\label{sec:B5}
Let $G$ and $\g$ be as in Subsection \ref{sec:B3}. Let $\g= \sg\oplus\tg\oplus \ng$
be a decomposition of the form \eqref{eq:levidec}. Let $G^\rr$ and $G^\uu$ denote
the connected subgroups of $G$ whose Lie algebras are $\sg\oplus \tg$ and
$\ng$ respectively. Then $G^\uu$ is the unipotent radical of $G$ and
$G=G^\rr\ltimes G^\uu$. Here we consider the projection $\pi : G \to G^\rr$ and
give an algorithm for computing $\pi(g)$ for $g\in G$.

A first remark is that we know the differential $\mathrm{d} \pi : \g \to
\sg\oplus \tg$. We have $\mathrm{d}\pi (x+y) = x$, where $x\in \sg\oplus \tg$
and $y\in \ng$. Let $g\in G$. We can compute its Jordan decomposition
$g=su$ where $s$ is semisimple and $u$ is unipotent. Then $\pi(g) =
\pi(s)\pi(u)$. Moreover, $z=\log(u)\in \g$ and $\pi(u) = \exp(
\mathrm{d}\pi (z) )$. So we are left with computing $\pi(s)$
for a semisimple element $s\in G$.

For this we work with Cartan subgroups of $G$. By definition a {\em Cartan
subgroup} of $G$ is the centralizer of a maximal torus in $G$
(\cite[Definition 28.5.1]{tauyu}). Cartan subgroups have the following properties:
\begin{enumerate}
\item A Cartan subgroup is connected (\cite[28.5.2]{tauyu}).
\item Let $T$ be a maximal torus of $G$ and $H=\cZ_G(T)$ the corresponding
  Cartan subgroup. Then $H=T\times H_u$, where $H_u$ is the set of unipotent
  elements of $H$  (\cite[28.5.2]{tauyu}).
\item A subalgebra $\h$ of $\g$ is a Cartan subalgebra of $\g$ if and only
  if it is the Lie algebra of a Cartan subgroup
  (\cite[Proposition 29.2.5]{tauyu}).
\end{enumerate}

Let $s\in G$ be semisimple, and consider its centralizer $\cZ_G(s)$. The
Lie algebra of this group is $\zg_{\g}(s) = \{x\in \g \mid xs=sx\}$.
The element $s$ lies in a maximal
torus $T$ of $G$. The corresponding Cartan subgroup $\cZ_G(T)$ is contained in
$\cZ_G(s)$. Hence $\zg_{\g}(s)$ contains a Cartan subalgebra of $\g$. This implies
that any Cartan subalgebra of $\zg_{\g}(s)$ is a Cartan subalgebra of $\g$.
Let $\h_0$ be a Cartan subalgebra of $\zg_{\g}(s)$. Let $H_0 = T_0\times U_0$ be
the corresponding Cartan subgroup. Then $s\in T_0$ as the latter is a maximal
torus of $\cZ_G(s)$. Using the algorithm of Subsection \ref{sec:B4} we can
find a Cartan subalgebra $\h_\sg$ of $\sg$, a Cartan subalgebra $\h$ of
$\g$ containing $\sg\oplus \tg$ and $x_1,\ldots,x_k\in \ng$ with
$\exp(\ad x_1)\cdots \exp(\ad x_k)(\h_0) = \h$. Set $h=\exp(x_1)\cdots
\exp(x_k)$. Then $h\h_0 h^{-1} =\h$. Let $H$ be the Cartan subgroup of $G$
corresponding to $\h$. As Cartan subgroups are connected, we also have
$hH_0 h^{-1} = H$. The set of semisimple elements of $\h$ is exactly
$\h_{\sg}\oplus \tg$. So if we write $H=S\times U$, where $S$ is a maximal torus
and $U$ consists of unipotent elements, then $S\subset G^\rr$.
The $\exp(x_i)$ lie in $G^\uu$, so that $\pi(h)=1$. We see that
$\pi(s) = \pi(hsh^{-1}) = hsh^{-1}$ as $hsh^{-1}\in S$.

\begin{remark}\label{rem:iseltof}
Using similar ideas we can devise an algorithm for testing whether an element
$g\in \GL(n,\C)$ lies in a connected linear algebraic group $G\subset
\GL(n,\C)$, given only its Lie algebra $\g\subset \gl(n,\C)$. Indeed,
we compute the Jordan decomposition $g=su$, where $s\in \GL(n,\C)$ is
semisimple and $u\in \GL(n,\C)$ is unipotent. Then $g\in G$ if and only if
both $s,u\in G$. But $u\in G$ if and only if $\log(u)\in \g$. So we are left
with deciding whether $s\in G$. We check whether $s\g s^{-1} = \g$; if not,
then $s\not\in G$. Otherwise, we compute a Cartan subalgebra $\h_0$ of the centralizer
$\zg_{\g}(s)$ of $s$ in $\g$. By (2) above, we have $\h_0 = \tl_0\oplus\ug_0$ where $\tl_0$, $\ug_0$
are the sets of semisimple, respectively nilpotent elements of $\h_0$.
By computing Jordan decompositions of elements of $\h_0$, we
find a basis of the subspace $\tl_0$.
The Lie algebra $\zg_{\g}(s)$ is
the Lie algebra of the algebraic group $Z_G(s)$.
The Cartan subalgebra $\h_0$ of $\zg_{\g}(s)$
is therefore the Lie algebra of a Cartan subgroup of $Z_G(s)$. This
Cartan subgroup is the centralizer of a unique maximal torus $T_0$ of $Z_G(s)$.
The Lie algebra of $T_0$ therefore has to be $\tl_0$.
If $s\in G$, then, as above,
it follows that $s\in T_0$. Hence $s\in G$ if and only if $s\in T_0$.
The latter can be decided using the methods of Subsection \ref{tor:1} below.
\end{remark}
\end{subsec}

\section{Algorithms for algebraic tori}\label{sec:tori}

In this section we describe a few solutions to algorithmic problems for
algebraic tori. By definition, a torus is a connected diagonalizable
algebraic subgroup of $\GL(n,\C)$.

Many of our algorithms use tori. A basic operation is the construction of an
explicit homomorphism of a torus to a product of ``standard'' tori. This is
performed in Subsection \ref{tor:3}. This gives an immediate algorithm for
computing the first Galois cohomology set of a torus, which is explained in
the same subsection.

In Sections \ref{s:connected} and \ref{s:connected-equivalence}
we will solve Problems \ref{mp:1} and \ref{mp:2} for connected reductive groups
by reducing them to the same problems
for the maximal torus containing a maximal compact torus, and then
acting with the corresponding Weyl group. For both computations the algorithms
given in this section are essential. Also the algorithm for computing the
abelian $\Hr^2$ for a quasi-torus, given in Section \ref{s:H2-quasi-torus},
uses the decomposition of a torus into standard tori.

Throughout we assume that
a torus $T$ is given by a basis  $B_\tg$ of its Lie algebra $\tl \subset \gl(n,\C)$.

Our algorithms use computations in lattices. A {\em lattice} is a finitely
generated subgroup of $\Z^n$.
We write the elements of $\Z^n$ as row vectors, and use the
convention that for  $v\in \Z^n$ its components are denoted $v_1,\ldots,v_n$,
that is, $v=(v_1,\ldots,v_n)$.

A lattice $L\subset \Z^n$ has a basis, that is, a finite
set $\{v^1,\ldots,v^m\}\subset L$ such that each $v\in L$ can uniquely
be written as $v=k_1v^1+\cdots +k_m v^m$ where $k_i\in \Z$. To this basis
we associate the matrix with rows $v^1,\ldots,v^m$. The integer $m$ is
called the {\em rank} of $L$.

\begin{subsec}\label{lat:1}
Let $A$ be an integral $m\times n$-matrix of rank $m$, where $m\leq n$.
Denote the first nonzero entry of row $i$ by $A(i,j_i)$.
Then $A$ is said to be in {\em Hermite normal form} if $j_1 < j_2< \cdots <
j_m$, \,$A(i,j_i)>0$ for $1\leq i\leq m$, and $0\leq A(k,j_i) < A(i,j_i)$ for
$1\leq k <i\leq m$.

Let $B$ be the matrix associated to a basis of the lattice $L\subset \Z^n$ of
rank $m$. Then there is a unique $m\times n$-matrix $A$ in Hermite
normal form such that $A=PB$ where $P\in \GL(m,\Z)$;
see \cite[\S 8.1, Proposition 1.1, Corollary 1.2]{sims}.
The rows of $A$ also form a basis of $L$. Moreover $A$ and $P$ can be computed
by integer row operations; see \cite[\S 8.1]{sims}.

Let $A$ be an integral $m\times n$-matrix of rank $m$ where $m\leq n$.
Then $A$ is said to be in {\em Smith normal form} if $A(i,i)$ are positive
for $1\leq i\leq m$, $A(i,i)$ divides $A(i+1,i+1)$ for $1\leq i\leq m-1$, and
$A$ has no other nonzero entries.

Let $B$ be the matrix associated to a basis of the lattice $L\subset \Z^n$ of
rank $m$. Then there is a unique $m\times n$-matrix $A$ in Smith
normal form such that $A=PBQ$ where $P\in \GL(m,\Z)$, $Q\in \GL(n,\Z)$;
see \cite[\S 8.2, Proposition 3.2]{sims}. Furthermore, $A$ and $P,Q$
can be computed by integer row and column operations; see \cite[\S 8.2]{sims}.
Write $d_i = A(i,i)$ and define the map $f : \Z^n\to (\Z/d_1\Z)\times
\cdots \times (\Z/d_m\Z) \times \Z^{n-m}$ in the following way. For $v\in\Z^n$
we set $w= vQ$; then $f(v) = ([w_1]_{d_1},\ldots,[w_m]_{d_m},w_{m+1},\ldots, w_n)$.
Then $f$ is a homomorphism with kernel $L$
(\cite[\S 8.1, Proposition 3.3]{sims}), so that $f$ induces an
isomorphism
$$f : \Z^n/L \to (\Z/d_1\Z)\times \cdots \times (\Z/d_m\Z) \times \Z^{n-m}.$$
Let $u^1,\ldots,u^n$ denote the rows of $Q^{-1}$. It follows that they form a
basis of $\Z^n$ such that $d_i u_i\in L$ for $1\leq i\leq m$.
\end{subsec}

\begin{subsec}\label{lat:2}
A lattice $L\subset \Z^n$ is said to be {\em pure} if $\Z^n/L$ is torsion-free.
So $L$ is pure if and only if the Smith normal form of the matrix
associated to a basis of $L$ has only 1's on the diagonal.

\begin{lemma}\label{lem:Btr}
Let $L\subset \Z^n$ be a pure lattice of rank $m$. Let $B$ be the matrix
associated to a basis of $L$. Then the rows of $B^T$ span $\Z^m$.
\end{lemma}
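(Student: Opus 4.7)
The plan is to invoke the Smith normal form recalled in Subsection \ref{lat:1}. Write $A = PBQ$ with $A$ in Smith normal form, $P \in \GL(m,\Z)$, $Q \in \GL(n,\Z)$, and diagonal entries $d_1,\ldots,d_m$. By the description of the isomorphism $f$ in Subsection \ref{lat:1}, we have
\[
\Z^n/L \;\cong\; (\Z/d_1\Z)\times\cdots\times(\Z/d_m\Z)\times\Z^{n-m}.
\]
Since $L$ is pure, $\Z^n/L$ is torsion-free, which forces $d_1=\cdots=d_m=1$. Hence $A=(I_m\mid 0)$.

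Next I would reinterpret the conclusion as a surjectivity claim. The rows of $B^T$ are the columns of $B$, so their $\Z$-span equals the image of the $\Z$-linear map
\[
B \colon \Z^n \to \Z^m, \qquad v \mapsto Bv
\]
(with $v$ viewed as a column vector). From $B = P^{-1}(I_m\mid 0)Q^{-1}$, this map factors as the automorphism $Q^{-1}$ of $\Z^n$, followed by the projection $(I_m\mid 0)$ onto the first $m$ coordinates, followed by the automorphism $P^{-1}$ of $\Z^m$. Each factor is surjective, so $B$ is surjective, which is what we want.

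There is no real obstacle here; the content is entirely the observation that purity of $L$ is equivalent to all invariant factors $d_i$ being $1$, after which the surjectivity is immediate from the Smith decomposition. If one prefers to avoid the language of surjective maps, one can equivalently use the basis $u^1,\ldots,u^n$ of $\Z^n$ constructed from the rows of $Q^{-1}$ at the end of Subsection \ref{lat:1}: purity gives $u^1,\ldots,u^m \in L$, and since they are linearly independent in a rank-$m$ lattice they form a $\Z$-basis of $L$, so the change-of-basis matrix expressing $u^1,\ldots,u^m$ in terms of the given basis lies in $\GL(m,\Z)$ — and this matrix is (up to the invertible action of $P$) exactly $B^T$ read off in the appropriate coordinates.
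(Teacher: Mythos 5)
Your argument is correct and follows essentially the same route as the paper: both rest on the Smith normal form $A=PBQ$, the observation that purity forces $A=(I_m\mid 0)$, and the invertibility of $P$ and $Q$ over $\Z$. The only cosmetic difference is that you phrase the conclusion as surjectivity of the map $v\mapsto Bv$ factored through $Q^{-1}$ and $P^{-1}$, whereas the paper transposes and reads off that the rows of $Q^TB^T$ span the same lattice as those of $B^T$; the content is identical.
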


\begin{proof}
Let $A=PBQ$ be the Smith normal form of $B$. As shown above, we have
$A=( I_m ~|~ 0_{m\times (n-m)})$ where $I_m$ is the $m\times m$-identity matrix and
$0_{m\times (n-m)}$ is the $m\times (n-m)$-matrix whose entries are all 0.
Transposing, we get
$$Q^TB^T = \begin{pmatrix} P^{-T} \\ 0_{(n-m)\times m} \end{pmatrix}.$$
Since $Q^T\in \GL(n,\Z)$, the rows of $Q^TB^T$ span the same lattice as
the rows of $B^T$. Since $P^{-T} \in \GL(m,\Z)$, its rows span $\Z^m$.
\end{proof}
\end{subsec}

\begin{subsec}\label{lat:3}
Now we describe an algorithm for the following. Let $L\subset \Z^n$ be
a lattice of rank $m$ with an automorphism $\tau$ of order 2. The algorithm
finds a basis $e^1,\ldots,e^r$, $f^1,\ldots,f^s$, $g^1,\ldots,g^t$,
$h^1,\ldots,h^t$ of $L$ such that $\tau(e^i)=e^i$, $\tau(f^j)=-f^j$,
$\tau(g^k)=h^k$, $\tau(h^k)=g^k$.
This, in particular, proves Proposition \ref{p:indecomp}.
 The algorithm closely follows the proof
of the theorem asserting the existence of such a basis in \cite[Appendix A]{BT*}.
The input to the algorithm is the $m\times m$-matrix $T$ of $\tau$ with respect
to a given basis $v^1,\ldots,v^m$ of $L$. So instead of working with
$L$, we work with the coefficient vectors of its elements in $\Z^m$.
For such a coefficient vector $u\in \Z^m$ we also write $\tau(u)$ for the
vector $Tu$.
The algorithm is recursive, that is, the procedure calls itself with
lattices of smaller rank as input. It takes the following steps:
\begin{enumerate}
\item If $m=1$, then $L$ is spanned by $v$ with $\tau(v)=v$ or $\tau(v)=-v$
  so there is nothing to do.
\item From now on we assume $m\geq 2$. If $\tau(v)=-v$ for all $v\in L$,
  there is nothing to do. Otherwise we compute an element $e\in \Z^m$ such that
  $\tau(e)=e$. We may assume that the components of
  $e$ have greatest common divisor equal to 1. In other words, the lattice
  $L'\subset \Z^m$ spanned by $e$  is pure.
\item Let $B$ be the matrix with single row $e$. Let $A=PBQ$ be its Smith
  normal form. Let $u^1,\ldots,u^m$ be the rows of $Q^{-1}$. Then
  $u^1\in L'$ and the images of $u^2,\ldots,u^m$ span $\Z^m/L'$.
  Using this basis, we compute the matrix $T'$ of $\tau$ acting on $\Z^m/L'$.
\item  We call  the procedure recursively with input $T'$. The result is
  a basis of $\Z^{m-1}$. By taking the corresponding linear combinations of
  $u^2,\ldots,u^m$, we obtain elements $e_i$, $f_j$, $g_k$, $h_k$ of
  $\Z^m$ with the stated properties modulo $L'$.
\item As shown in \cite[Appendix A]{BT*}, we have $\tau(e_i) = e_i$.
\item We have $\tau(g_k) = h_k + le$ for some $l\in \Z$. We replace
  $h_k$ by $h_k+le$, after which $\tau(g_k) = h_k$, $\tau(h_k)=g_k$.
\item We have $\tau(f_j) = -f_j + le$ for a certain $l\in \Z$.
  If $l=2k$ is even, then after replacing $f_j$ by $f_j-ke$ we have
  $\tau(f_j) = -f_j$. If $l=2k+1$ is odd, then replacing $f_j$ by
  $f_j-ke$ gives $\tau(f_j) = -f_j+e$.
\item Let $f_1,\ldots,f_q$ be all $f_j$ with $\tau(f_j) = -f_j+e$. If $q$ is
  0, then we are done and we add the vector $e$ to the set of elements $e_i$.
  Otherwise, for
  $2\leq j\leq q$ replace $f_j$ by $f_j-f_1$, so that $\tau(f_j) = -f_j$
  for $2\leq j\leq m$. Set $g=f_1$ and $h=-f_1+e$ so that $\tau(g) = h$
  and $\tau(h)=g$.
\end{enumerate}
\end{subsec}

\begin{subsec}\label{tor:1}
Let $T\subset \GL(n,\C)$ be a torus. We suppose that we know a basis $B_\tg$ of its
Lie algebra $\tl\subset \gl(n,\C)$. Here we describe a method to compute
an explicit isomorphism $\mu : (\C^\times)^d \to T$, and a method to compute
the inverse image of an element of $T$ relative to $\mu$.

First we assume that $\tl$ and hence $T$ are diagonal. We also assume that the
entries of the elements of the basis $B_\tl$ of  $\tl$ are elements of an algebraic number field
$F\supset \Q$.
 We consider the lattice
$$\Lambda =\Big \{ e=(e_1,\ldots,e_n)\in \Z^n \ \big|\  \sum_{i=1}^n e_i a(i,i) = 0
     \text{ for all } a\in B_\tl \Big\}$$
where $a(i,i)$ are the diagonal entries of the matrix $a$.
By writing the  diagonal entries $a(i,i)$ as linear combinations of a
$\Q$-basis of $F$, we can describe $\Lambda$ as the set of integral solutions
to a system of linear equations with coefficients in $\Q$. By an algorithm
based on the Smith normal form algorithm (the purification algorithm,
see \cite[\S 6.2]{wdg}\hs)
it is then possible to compute a basis of $\Lambda$. We note that $\Lambda$ is
pure. We have
\begin{align*}
\tl &= \Big\{ \diag(a_1,\ldots,a_n) \ \big|\ \sum_{i=1}^n e_i a_i = 0 \text{ for all }
e\in \Lambda\Big\},\\
T &= \Big\{ \diag(\alpha_1,\ldots,\alpha_n) \ \big|\  \prod_{i=1}^n \alpha_i^{e_i} = 1
\text{ for all } e\in \Lambda\Big\}
\end{align*}
(see \cite[Example 4.2.5]{wdg}). Now define
$$\Lambda^{\perp} = \Big\{ (m_1,\ldots,m_n)\in \Z^n \ \big|\  \sum_{i=1}^n m_ie_i= 0
\text{ for all } e\in \Lambda\Big\}.$$
Again by the purification algorithm we can compute a
basis of $\Lambda^\perp$.

Let $m=(m_1,\ldots,m_n)\in \Lambda^\perp$. Then
for all $t\in \C^\times$ we have $\diag(t^{m_1},\ldots,t^{m_n})\in T$. In other words,
the map $\lambda_m : \C^\times\to T$, $\lambda_m(t) = \diag(t_1^{m_1},\ldots,
t_n^{m_n})$ is a cocharacter of $T$. Let $m^1,\ldots,m^d$ be a basis of
$\Lambda^\perp$. Then we define $\lambda : (\C^\times)^d\to T$ by $\lambda(t_1,\ldots,
t_d) = \lambda_{m^1}(t_1)\cdots \lambda_{m^d}(t_d)$. By Lemma \ref{lem:invlam}
below this map has a regular inverse, and therefore it is an isomorphism of
algebraic groups.

We need an algorithm to compute the inverse images of $\lambda$.
Let $M$ be the matrix with rows $m^1,\ldots,m^d$. By Lemma \ref{lem:Btr}
there is a matrix $P\in \GL(n,\Z)$ such that
$$PM^T = \begin{pmatrix} I_d \\ 0_{(n-d)\times d}\end{pmatrix}.$$
This matrix $P$ can be computed by the Hermite normal form algorithm.
The next lemma provides an immediate method for computing inverse images.
As a byproduct it gives a method for testing whether a given diagonal
matrix lies in $T$.

\begin{lemma}\label{lem:invlam}
Let $A=\diag(\alpha_1,\ldots,\alpha_n)\in \GL(n,\C)$. For $1\leq j\leq n$ set
$t_j = \prod_{k=1}^n \alpha_k^{P(j,k)}$. Then $A\in T$ if and only if
$t_{d+1}=\ldots =t_n=1$. In this case $A=\lambda(t_1,\ldots,t_d)$.
\end{lemma}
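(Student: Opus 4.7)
The plan is to use the fact that the rows of $P$ give an adapted $\Z$-basis of $\Z^n$ for the pair $(\Lambda,\Lambda^\perp)$, and then recognize $t_1,\ldots,t_d$ as the values on $A$ of the characters of $T$ dual to the chosen basis of cocharacters.

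First I would record the content of the identity $PM^T=\binom{I_d}{0}$. Writing $P_i=(P(i,1),\ldots,P(i,n))$ for the $i$-th row of $P$, and recalling that the rows of $M$ are $m^1,\ldots,m^d$, this identity says
\[
P_i\cdot m^j=\sum_{k=1}^n P(i,k)\,m^j_k=\delta_{ij}\quad(1\le i\le d),\qquad P_i\cdot m^j=0\quad(d<i\le n).
\]
Since $P\in\GL(n,\Z)$, the rows $P_1,\ldots,P_n$ form a $\Z$-basis of $\Z^n$. The lattice $\Lambda$ is pure by construction (it is the kernel of an integer linear map on $\Z^n$), so $(\Lambda^\perp)^\perp=\Lambda$. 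From $P_i\cdot m^j=0$ for $i>d$ and all $j$, combined with the fact that $m^1,\ldots,m^d$ span $\Lambda^\perp$, we get $P_{d+1},\ldots,P_n\in\Lambda$. Conversely, if $v=\sum_i c_iP_i\in\Lambda$, then for each $j\le d$ we have $0=v\cdot m^j=c_j$, so $P_{d+1},\ldots,P_n$ is a $\Z$-basis of $\Lambda$.

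Next I would deduce the first assertion of the lemma. The defining condition $A=\diag(\alpha_1,\ldots,\alpha_n)\in T$ is $\prod_k\alpha_k^{e_k}=1$ for every $e\in\Lambda$, which, since $P_{d+1},\ldots,P_n$ is a basis of $\Lambda$, is equivalent to $\prod_k\alpha_k^{P(j,k)}=1$ for $d<j\le n$, that is, $t_{d+1}=\cdots=t_n=1$.

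For the second assertion, assume $A\in T$. The $i$-th diagonal entry of $\lambda(t_1,\ldots,t_d)$ is
\[
\prod_{j=1}^d t_j^{m^j_i}=\prod_{j=1}^d\Big(\prod_{k=1}^n\alpha_k^{P(j,k)}\Big)^{m^j_i}=\prod_{k=1}^n\alpha_k^{u_i(k)},\quad u_i(k)\coloneqq\sum_{j=1}^d P(j,k)\,m^j_i.
\]
Thus the claim $\lambda(t_1,\ldots,t_d)=A$ amounts to $u_i-\mathbf{e}_i\in\Lambda$ for each $i$, where $\mathbf{e}_i$ is the $i$-th standard basis vector of $\Z^n$. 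Using $(\Lambda^\perp)^\perp=\Lambda$, it suffices to check $(u_i-\mathbf{e}_i)\cdot m^l=0$ for $1\le l\le d$; a direct computation gives
\[
(u_i-\mathbf{e}_i)\cdot m^l=\sum_{j=1}^d m^j_i\,(P_j\cdot m^l)-m^l_i=\sum_{j=1}^d m^j_i\,\delta_{jl}-m^l_i=0,
\]
as required. The only non-routine point in the plan is the identification of $P_{d+1},\ldots,P_n$ as a $\Z$-basis of $\Lambda$; everything else is a bookkeeping check of the duality $P_i\cdot m^j=\delta_{ij}$.
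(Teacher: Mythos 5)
Your proof is correct, and it reorganizes the argument in a genuinely different (and slightly more self-contained) way than the paper does. The paper's proof starts from the observation that $A\in T$ if and only if the multiplicative system $\prod_{i=1}^d t_i^{M^T(l,i)}=\alpha_l$, $1\le l\le n$, has a solution — that is, it takes as input that $T$ is exactly the image of $\lambda$ — and then shows that left-multiplying the exponent matrix by any $R\in\GL(n,\Z)$ produces an equivalent system; taking $R=P$ and using $PM^T=\begin{pmatrix}I_d\\ 0\end{pmatrix}$ turns the system into $t_j=\prod_k\alpha_k^{P(j,k)}$ for $j\le d$ and $1=\prod_k\alpha_k^{P(j,k)}$ for $j>d$, which gives the lemma. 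You instead work directly with the defining character equations of $T$ indexed by $\Lambda$: from $P_i\cdot m^j=\delta_{ij}$ ($i\le d$), $P_i\cdot m^j=0$ ($i>d$), purity of $\Lambda$, and $(\Lambda^\perp)^\perp=\Lambda$, you identify $P_{d+1},\ldots,P_n$ as a $\Z$-basis of $\Lambda$, which immediately gives the membership criterion $t_{d+1}=\cdots=t_n=1$; you then verify $A=\lambda(t_1,\ldots,t_d)$ by checking that each exponent vector $u_i-\mathbf{e}_i$ lies in $\Lambda$. Both arguments hinge on the same identity $PM^T=\begin{pmatrix}I_d\\ 0\end{pmatrix}$ and the unimodularity of $P$, but yours does not presuppose the surjectivity of $\lambda$ onto $T$ (indeed it reproves it), at the cost of invoking purity and the double-orthocomplement identity, which you justify. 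One cosmetic point: $\Lambda$ is the integer kernel of a linear map with coefficients in the number field $F$ rather than of an ``integer linear map,'' but purity holds for the same reason (and is stated in the paper), so this does not affect your argument.
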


\begin{proof}
Note that $A\in T$ if and only if the equations
\begin{equation}\label{eq:teq1}
\prod_{i=1}^d t_i^{M^T(l,i)} = \alpha_l \text{ for } 1\leq l\leq n
\end{equation}
have a solution. Let $R$ be an $n\times n$ integral matrix, and suppose
that the complex numbers $t_i\in \C^\times$ satisfy \eqref{eq:teq1}. Then
a straightforward calculation shows that the $t_i$ satisfy the equations
\begin{equation}\label{eq:teq2}
  \prod_{i=1}^d t_i^{(RM^T)(l,i)} =\, \prod_{j=1}^n \alpha_j^{R(l,j)}.
\end{equation}
Suppose that $R\in \GL(n,\Z)$ and that the elements $t_i$ satisfy \eqref{eq:teq2}.
Then a similar calculation shows that they satisfy \eqref{eq:teq1}.
Hence the two sets of equations are equivalent in that case.

Now we take $R=P$. Then we see that $t_1,\ldots,t_d$ satisfy \eqref{eq:teq1}
if and only if $t_j = \prod_{k=1}^n \alpha_k^{P(j,k)}$ for $1\leq j\leq d$ and
$1= \prod_{k=1}^n \alpha_k^{P(j,k)}$ for $d+1\leq j\leq n$.
\end{proof}

Now we let $T$ be not necessarily diagonal. We can compute a matrix
$C\in \GL(n,\C)$ such that $T'=C^{-1}T C$ and $\tl'=C^{-1}\tl\hs C$ consist of
diagonal matrices. As outlined in Subsection \ref{tor:1} we can compute an isomorphism
$\lambda' : (\C^\times)^d \to T'$, so we also have the isomorphism
$\lambda : (\C^\times)^d\to T$ by $\lambda(t_1,\ldots,t_d) = C\lambda'(t_1,\ldots,
t_d)C^{-1}$. Furthermore, by Lemma \ref{lem:invlam} we can compute the
inverse of $\lambda'$ and hence the inverse of $\lambda$, and we can
decide whether a given element  $g\in\GL(n,\C)$ lies in $T'$.
Hence we can decide the same for $T$.
\end{subsec}

\begin{subsec}\label{tor:3}
Let $T\subset \GL(n,\C)$ be a torus equipped with
an anti-regular involution $\sigma : T\to T$, $\sigma(g) = \upgam g$.
In order to work explicitly with $\sigma$, we assume that it is given by
$\upgam g =N_\sigma\bar{g} N_\sigma^{-1}$, where $N_\sigma\in \GL(n,\C)$ satisfies
the cocycle condition $N_\sigma\overline{N}_\sigma=1$
and $g\mapsto \bar g$ is the standard complex conjugation of the entries of $g$.

Here we describe how to explicitly write $T$ as a product of indecomposable tori
as in Corollary \ref{c:indecoposable-tori}. This leads to an immediate algorithm
for computing $\Hr^1 \TT$.

Let $T'$, $C$, $\lambda,\lambda'$ be as in Subsection \ref{tor:1}. We want to
describe $\sigma$ in terms of $\lambda$. For this we write
$\upgam \lambda(t_1,\ldots,t_d) =\lambda(\bar v_1,\ldots,\bar v_d)$,
 where $\bar v_i = v_i(\bar t_1,\ldots,\bar t_d)$.
 On the one hand, we have $\lambda(\bar v_1,\ldots,\bar v_d)=
C\lambda'(\bar v_1,\ldots,\bar v_d)C^{-1}$. On the other hand,
$\upgam \lambda(t_1,\ldots,t_d) = N_\sigma\overline{C}\lambda'(\bar t_1,\ldots,
\bar t_d)\overline{C}^{-1}N_\sigma^{-1}$.
Set $B=C^{-1}N_\sigma\overline{C}$; since the numbers
$t_i$ are arbitrary,
we can pass from $\bar t_i$ to $t_i$ and conclude
\begin{equation}\label{eq:conj}
\lambda'(v_1,\ldots,v_d) = B\lambda'(t_1,\ldots,t_d)B^{-1}
\end{equation}
where $v_i = v_i(t_1,\ldots,t_d)$. The maps $\lambda(t_1,\ldots,t_d)\mapsto
t_i$ are characters of $T$ generating its character group. Also the maps
$\lambda(t_1,\ldots,t_d)\mapsto v_i(t_1,\ldots,t_d)$ are characters of $T$.
Hence each $v_i(t_1,\ldots,t_d)$ is a monomial in the variables $t_i$ (where each
$t_i$ appears with an integral exponent). These monomials
can be computed using \eqref{eq:conj}.
We define the $d\times d$-matrix $\tau$ by
$$v_i(t_1,\ldots,t_d) = t_1^{\tau(i,1)}\cdots t_d^{\tau(i,d)}.$$
Then $\tau$ is the matrix of an involution of $\Z^d$,
which by abuse of notation we also denote by $\tau$.

Now we show how to write $T$ as a product of indecomposable tori.
Using the algorithm of Subsection \ref{lat:3},
we compute a basis $f^1,\ldots,f^k$, $e^1,\ldots,e^l$, $g^1,\ldots,g^r$,
$h^1,\ldots,h^r$ of $\Z^d$ such that $\tau f^i = -f^i$, $\tau e^i = e^i$,
$\tau g^i=h^i$, $\tau h^i = g^i$. Let $A$ be the matrix with columns
$$f^1,\ldots,f^k, e^1,\ldots,e^l, g^1,h^1,\ldots,g^r,h^r.$$
Then $A$ is an invertible integral matrix such that
$\nu=A^{-1}\tau A$ has diagonal elements $-1$, $1$ and $\begin{pmatrix}
  0&1\\1&0\end{pmatrix}$.
Define a homomorphism $\mu : (\C^\times)^d\to T$ by
$$\mu(u_1,\ldots,u_d) = \lambda( \prod_{j=1}^d u_j^{A(1,j)},\ldots,
\prod_{j=1}^d u_j^{A(d,j)})\hs.$$
If we set $u_j = \prod_k t_k^{A^{-1}(j,k)}$, then
$\mu(u_1,\ldots,u_m) = \lambda(t_1,\ldots,t_m)$. Hence $\mu$  is an
isomorphism. So we can  represent elements of $T$ as $\mu(u_1,\ldots,
u_d)$. Secondly, we have algorithms to switch between the representations of
elements of $T$ as $\lambda(t_1,\ldots,t_d)$ and $\mu(u_1,\ldots,u_d)$.

A straightforward computation shows that
$$\upgam \mu(u_1,\ldots,u_d) = \mu(\prod_{j=1}^d \bar u_j^{\nu(1,j)},\ldots,
\prod_{j=1}^d \bar u_j^{\nu(d,j)}).$$
Define $\mu_i : \C^\times \to T$ by
$\mu_i(u) = \mu(1,\ldots,1,u,1,\ldots,1)$ where $u$ appears at the $i$-th place.
Also set $\phi_i = \mu_i$ for $1\leq i\leq k$, $\epsilon_i =
\mu_{k+i}$ for $1\leq i\leq l$ and define $\delta_i :(\C^\times)^2 \to T$ by
$\delta_i(u,v)= \mu_{k+l+2i-1}(u)\mu_{k+l+2i}(v)$.
Let $F_i$, $E_i$, and $D_i$ be the subtori that are the images of,
respectively, the homomorphisms $\phi_i$, $\epsilon_i$ and $\delta_i$.
Then $T$ is the product of these subtori. Moreover,
$$\upgam\phi_i(u) = \phi_i(\bar u^{-1}),\,\upgam\epsilon_i(u) =
     \epsilon_i(\bar u),\,\upgam \delta_i(u,v) = \delta_i(\bar v, \bar u).$$
Hence each of these subtori is defined over $\R$ and is isomorphic to one of the tori given in
Corollary \ref{c:indecoposable-tori}.
We denote the corresponding $\R$-tori by $\FF_i$\hs, $\EE_i$\hs, and $\DD_i$\hs.

It is straightforward to compute the Galois cohomology of each of these
indecomposable tori. We have that $\Hr^1 \FF_i =
\{ [\phi_i(1)], [\phi_i(-1)]\}$ and $\Hr^1 \EE_i = \Hr^1 \DD_i = 1$.
Therefore, $\Hr^1 \TT$ consists of $[\phi_1(\varepsilon_1)\cdots
  \phi_k(\varepsilon_k)]$,
where $\varepsilon_i = \pm 1$. In particular, we have that $|\Hr^1 \TT| = 2^k$.
\end{subsec}

\section{$\Hr^1$ for connected reductive groups: representatives}
\label{s:connected}

In this section we describe our computational methods for solving Problem \ref{mp:1},
that is, for computing $\Hr^1\GG$,
in the case when our $\R$-group  $\GG$ is connected and  reductive.

Let $G\subset \GL(n,\C)$ be a connected reductive algebraic group equipped with
an anti-regular involution $\sigma : G\to G$, $\sigma(g) = \upgam g$.
In order to work explicitly with $\sigma$ we assume that it is given by
$\upgam g =N_\sigma\hs\bar{g} N_\sigma^{-1}$, where $N_\sigma\in \GL(n,\C)$
satisfies the cocycle condition $N_\sigma\overline N_\sigma=1$
and $g\mapsto \bar g$ is the standard complex conjugation of the entries of $g$.
We write $\GG=(G,\sigma)$.

In \cite{Borovoi22-CiM} a construction is given of the first Galois
cohomology set $\Hr^1\GG$. This construction yields an algorithm to
compute $\Hr^1\GG$. Here we describe this algorithm in detail.

The construction runs as follows. Let $\TT_0$ be a maximal compact torus in $\GG$.
Set $\TT=\cZ_\GG(\TT_0)$; then $\TT$ is a maximal torus in $\GG$. Set
$\NN_0 = \cN_\GG(\TT_0)$; then $\NN_0\subseteq\NN\coloneqq \cN_\GG(\TT)$.
We set $\WW_0 = \NN_0/\TT\subseteq \NN/\TT\eqqcolon\WW$.
As usual, we write $W_0=\WW_0(\C)\subset\WW(\C)=W$.
We define a right action of $W_0$ on $\Hr^1 \TT$: let
$n\in N_0$ represent $w\in W_0$, and let $[z]\in \Hr^1 \TT$; then
$$[z]\cdot w = [ n^{-1} z\bar{n} ].$$
In \cite{Borovoi22-CiM} it is shown that this is a well defined action of $W_0$ on $\Hr^1 \TT$, and
Theorem 3.1 of \cite{Borovoi22-CiM} states that the natural map
$\Hr^1 \TT\to \Hr^1 \GG$ induces a bijection between $(\Hr^1 \TT)/W_0$ and $\Hr^1 \GG$.
In order to use this construction we must explain how to compute the various
objects constructed above.

The Lie algebra  $\g = \Lie G$ has a complex conjugation coming from $\sigma$.
Since we assume that $\sigma$ is given by $\upgam g = N_\sigma\hs\bar{g}N_\sigma^{-1}$ for $g\in G$, the complex
conjugation on $\g$ is given by the same formula, $\upgam x = N_\sigma\hs\bar{x}N_\sigma^{-1}$
for $x\in \g$. We set
$$\gR_\rR =\Lie \GG= \{ x \in \g \mid \upgam x = x\}$$
which is a real form of $\g$.
The input to our algorithm is a basis of the real Lie algebra $\gR_\rR$.

We have $\gR_\rR = \sgR_\rR \oplus \zgR_\rR$, where
$\sgR_\rR=[\gR_\rR,\gR_\rR]$ is a semisimple Lie algebra and
$\zgR_\rR$ is the center of $\gR_\rR$ consisting
of commuting diagonalizable (over $\C$) matrices.
Then $\zgR_\rR=\zgR_s\oplus\zgR_c$
where $\zgR_c$ is compact (consists of matrices with purely imaginary eigenvalues) and $\zgR_s$ is
split  (consists of matrices with real eigenvalues).
In \cite{dfg} algorithms are described for computing a Cartan decomposition
$\sgR_\rR = \kg\oplus\pg$ of $\sgR_\rR$.
Let $\hat \tgR_{\rR,0}$ be a Cartan subalgebra of $\kg$. Let $\tgR_{\rR,0}$ be the sum of
$\hat \tgR_{\rR,0}$ and the compact part  $\zgR_c$ of $\zgR_\rR$.
Then $\tgR_{\rR,0}$ is the Lie algebra
of a maximal compact torus $\TT_0$ of $\GG$. Let $\tgR_\rR$ be the centralizer of
$\tgR_{\rR,0}$ in $\gR_\rR$; then $\tgR_\rR$ is the Lie algebra
of the maximal torus $\TT=\cZ_\GG(\TT_0)$ of $\GG$.
We have $\tgR_\rR\cap\kg=\hat\tgR_{\rR,0}$.

As in Subsection \ref{tor:3}, we compute the maps $\lambda$, $\mu$,
$\phi_i$, $\epsilon_i$, $\delta_i$, and subsequently $\Hr^1 \TT$.
Now we explain how to compute the action of $W_0$ on $\Hr^1 \TT$.

Let $\sg\subset \g$ be the complexification of $\sgR_\rR$.
We compute the root system $\Phi$ of $\g$ with respect to the Cartan
subalgebra $\tl$.  Let $x_1,\ldots,x_\ell$, $y_1,\ldots,y_\ell$, $h_1,\ldots,h_\ell$
be a canonical generating set
of $\sg$ corresponding to a set of simple roots
of $\Phi$;  see \cite[page 126]{jac} or
\cite[\S 2.9.3]{wdg}.
 Then the element $\exp(x_i) \exp(-y_i)\exp(x_i)$ in $G$ induces the
$i$-th simple reflection of the Weyl group $W$. We can write any element of $W$
as a product of simple reflections and thus find an element of
$N=\cN_G(\tl)$ representing it.
Furthermore, $W_0$ is the subgroup of $W$ consisting of the elements of $W$
leaving $\tl_0$ invariant. Since $W$ acts trivially on $\zgR$,
we see that an element $w\in W$ lies in $W_0$ if and only if $w$ leaves
$\hat\tl_0$ invariant. Hence
we can decide whether $w\in W$ lies in $W_0$
by computing an element $n\in N$ representing it and checking whether
$n\hat\tl_0 n^{-1} =\hat\tl_0$.

Now let $w\in W_0$ and $[z]\in \Hr^1 \TT$.
We use the notation of Subsection \ref{tor:3}. Let $n\in N$ represent $w$ and
set $z' = n^{-1} z \bar{n}$. Since we can invert the map $\lambda$ and
switch between $\lambda$ and $\mu$, we can compute $a_i,b_i,u_i,v_i\in \C^\times$
such that
$$z' = \phi_1(a_1)\cdots \phi_k(a_k)\epsilon(b_1)\cdots \epsilon(b_l)
\delta_1(u_1,v_1)\cdots\delta_r(u_r,v_r).$$
Since each $\phi_i(a_i)$ is a cocycle in $\FF_i$, we have that $a_i\in \R$.
If $a_i>0$ then we set $a_i'=1$. If $a_i<0$ then
we set $a_i'=-1$. Then the cocycle $\phi_i(a_i)$ in $\Zr^1\FF_i$
is equivalent to $\phi_i(a_i')$.
Because the Galois cohomology groups of the subtori $\EE_i$ and
$\DD_i$ are trivial, the cocycle $z'$ is equivalent to $\phi_1(a_1')\cdots
\phi_k(a_k')$. So we can compute the permutation action of $W_0$ on $\Hr^1 \TT$.
Of each orbit of this action we take one element. Those form $\Hr^1 \GG$.

\begin{remark}\label{rem:realweyl}
It is possible to compute $W_0$ by running over all elements of $W$ and
to decide for each element whether it belongs to $W_0$. However, when $W$ gets
bigger this becomes more and more difficult to execute in practice.
We explain how to compute a large subgroup of $W_0$ in one stroke.
Consider the adjoint group $\GG^\ad=\GG/Z(\GG)$ (which can be identified with the identity
component of the automorphism group of $\gR_\rR$).
Let $G^\ad_r = \GG^\ad(\R)$ be the group of $\R$-points of $\GG^\ad$
(which can be identified with the subgroup of $G^\ad$
consisting of the elements preserving the real form $\gR_\rR$ of $\g$).
Then $W_r = \cN_{G^\ad_r}(\tgR_\rR)/\cZ_{G^\ad_r}(\tgR_\rR)$ is called the
{\em real Weyl group} of $\tgR_\rR$; see Knapp \cite[(7.92a)]{knapp}.
Let $\theta\colon \sgR_\rR\to \sgR_\rR$ be the involutive automorphism of $\sgR_\rR$
acting as $+1$ on $\kg$ and as $-1$ on $\pg$.
We identify $\sgR_\rR=\Lie \GG^\ad$; then the involution $\theta$ of $\sgR_\rR$
induces an involution $\theta_\gG$
of the adjoint $\R$-group $\GG^\ad$ called a {\em Cartan involution}.
Let $K$ denote the group of fixed points of $\theta_\gG$ in $G^\ad_r$,
which is a compact Lie group with Lie algebra $\kg$.
Then we  have $W_r = \cN_K(\tgR_\rR)/\cZ_K(\tgR_\rR)$; see
Knapp \cite[(7.92b)]{knapp}.
Since all elements of $W_r$ have representatives
in $K$, they preserve both $\tgR_\rR$ and $\kg$, and hence they preserve
$\tgR_\rR\cap\kg=\hat\tgR_{\rR,0}$ and $\hat\tgR_{\rR,0}\oplus\zgR_c=\tgR_{\rR,0}$.
Thus $W_r$ preserves $\TT_0$, whence  $W_r\subseteq W_0$.
We  remark that in \cite{dg21} an algorithm is given to compute $W_r$. So we can consider
the decomposition of $W$ as a union of cosets $w_iW_r$, $1\leq i\leq s$. Then
$w_iW_r$ lies in $W_0$ if and only if $w_i$ lies in $W_0$. So we only need to
check the coset representatives.
\end{remark}

\section{$\Hr^1$ for connected reductive groups: equivalence of cocycles}
\label{s:connected-equivalence}

In this section we describe our computational methods of solving Problem \ref{mp:2}
when $\GG$ is a connected reductive $\R$-group.
Namely, we consider the following problem. Let $G$ be as in Section \ref{s:connected}.
Let $g_1,g_2\in G$ be two cocycles, that is, $g_i \upgam g_i = 1$ for
$i=1,2$. We want to decide whether the cocycles $g_1$, $g_2$ are equivalent,
in other words, whether there exists $g\in G$ with $g^{-1} g_1 \upgam g =
g_2$, and find such an element $g$ in the affirmative case. Note that it is enough
to solve this problem when  $g_2$ is a fixed representative of a class in $\Hr^1 \GG$
that we have computed.

\begin{subsec}\label{equiv:1}
We look at the special case where $\GG$ is a torus. The three elementary
tori are easy to deal with:
\begin{enumerate}
\item Let $\FF$ be a torus consisting of $\phi(u)$ for $u\in \C^\times$ with
$\upgam \phi(u) = \phi(\bar u^{-1})$. Let $\phi(u_0)$ be a $1$-cocycle, which
means that $u_0\in \R$. If $u_0 >0$ then we set $v_0 = \sqrt{u_0}$
and obtain $\phi(v_0)^{-1} \phi(u_0) \upgam \phi(v_0) = \phi(1)$. If
$u_0<0$ then we set $v_0 = \sqrt{-u_0}$ yielding $\phi(v_0)^{-1} \phi(u_0)
\upgam \phi(v_0) = \phi(-1)$
\item Let $\EE$ be a torus consisting of $\epsilon(u)$ for $u\in \C^\times$ with
$\upgam \epsilon(u) = \epsilon(\bar u )$. Let $\epsilon(u_0)$ be a
$1$-cocycle in $\EE$, which is the same as saying that $u_0\bar u_0 =1$.
Write $u_0 = a+bi$, then $a^2+b^2=1$. If $u_0\neq 1$ then set $v_0 = b+(-a+1)i$.
If $u_0=1$ then set $v_0=1$. In both cases,
$\epsilon(v_0)^{-1} \epsilon(u_0) \upgam \epsilon(v_0) = \epsilon(1)$.
\item Let $\DD$ be a torus consisting of $\delta(u,v)$ for $u,v\in \C^\times$ with
$\upgam \delta(u,v) = \delta(\bar v, \bar u)$. Let $\delta(u_0,v_0)$ be a
$1$-cocycle, which is the same as saying $u_0\bar v_0 = 1$. Then
$\epsilon(u_0,1)^{-1}\epsilon(u_0,v_0)\upgam \epsilon(u_0,1) = \epsilon(1,1)$.
\end{enumerate}
In combination with the algorithms given in Subsections \ref{tor:1} and
\ref{tor:3}, this leads to an immediate algorithm for solving our problem for
general tori.
\end{subsec}

\begin{subsec}\label{equiv:2}
Let $\GG$ be a connected reductive $\R$-group.
Let $\TT_0$ and $\TT_0'$ be two maximal compact $\R$-tori in $\GG$.
It is known that there exists an element $g_\rR\in\GG(\R)$ such that
\[g_\rR^{-1}\cdot \TT'_0\cdot g_\rR=\TT_0\hs.\]
We consider the problem of finding such $g_\rR$ {\em assuming that we know an element $g\in G=\GG(\C)$ such that
$g^{-1}\cdot \TT'_0\cdot g=\TT_0$\hs.}

Write $\TT=\cZ_\GG(\TT_0)$, which is a maximal torus of $\GG$.
Write
\[\NN_0=\cN_\GG(\TT_0),\quad\ \NN=\cN_\GG(\TT);\]
then $\NN_0\subseteq \NN$.
We have
\[ g^{-1}\cdot\TT'_0\cdot g=\TT_0,\quad \upgam g^{-1}\cdot\TT'_0\cdot \upgam g=\TT_0,\]
whence
\[ g^{-1}\,\upgam g\cdot \TT_0\cdot\upgam g^{-1}\hs g=\TT_0\hs.\]
Set
\begin{equation}\label{e:coboundary}
z=g^{-1}\hs\upgam g;
\end{equation}
then
\[z\cdot \TT_0\cdot z^{-1}=\TT_0,\]
whence $z\in N_0$. It follows from \eqref{e:coboundary} that $z$ is a 1-cocycle,
that is, $z\in\Zr^1\hs\NN_0$.

Write $g_\rR=g\ntil$ for some $\ntil\in G$; then $\ntil^{-1}\cdot\TT_0\cdot \ntil=\TT_0$, whence $\ntil\in N_0$.
We have
\[1=g_\rR^{-1}\cdot\hm\upgam g_\rR=(g\ntil)^{-1}\cdot\upgam(g\ntil)=
    \ntil^{-1}\cdot g^{-1}\,\upgam g\cdot\hm \upgam n=\ntil^{-1}\cdot z\cdot\hm\upgam\ntil.\]
    Thus $[z]=[1]\in\Hr^1\NN_0$.
Conversely, if we find $\ntil\in N_0$ such that $\ntil^{-1}\cdot z\cdot\hm\upgam\ntil=1$ and put $g_\rR=g\ntil$,
then $g_\rR\in\GG(\R)$ and $g_\rR^{-1}\cdot \TT'_0\cdot g_\rR=\TT_0$\hs.

We have a short exact sequence
\[1\to \TT\labelto{}\NN_0\labelto\pi\WW_0\to 1,\]
where $\WW_0=\NN_0/\TT$.
It is easy to see that the group $W_0\coloneqq \WW_0(\C)$ embeds into $\Aut \TT_0$.
Since $\TT_0$ is compact, the complex conjugation $\gamma$
acts on the cocharacter group $\X_*(\TT_0)$ as $-1$,
and so it acts trivially on $\Aut\TT_0=\GL\big(\X_*(\TT_0)\big)$.
It follows that $\Gamma$ acts trivially on $W_0$, that is,
\[W_0=\WW_0(\C)=\WW_0(\R)\subseteq \WW(\R).\]
Conversely, let  $w\in\WW(\R)$; then $w$
acts on the real torus $\TT$ by a real automorphism,
and it acts on $\TT(\R)$ by a continuous automorphism.
Since $\TT_0$ is the maximal compact subtorus in $\TT$,
we have $w(\TT_0)=\TT_0$, whence $w\in \WW_0(\R)$ and $\WW(\R)\subseteq \WW_0(\R)=W_0$\hs.
Thus $W_0=\WW(\R)$.

We know that $[z]=[1]\in\Hr^1\hs\NN_0$\hs.
Hence,
\[[\pi(z)]=\pi_*[z]=[1]\in\Hr^1\hs\WW_0\hs,\]
that is, $\pi(z)=w^{-1}\hs\upgam w$ for some $w\in W_0$.
Since $\Gamma$ acts trivially on $W_0$, we see that $\pi(z)=1\in W_0$.
Thus $z\in \Zr^1\hs\TT$.

Since $[z]=[1]\in\Hr^1\hs\NN_0$, there exists $\ntil\in N_0$
such that $\ntil^{-1}\cdot z\cdot\hm\upgam \ntil=1$.
We describe a method to find such $\ntil$.

Consider the right action of $N_0$ on $\Zr^1\hs\TT$:
\[t*n=n^{-1}\cdot t\cdot\hm\upgam n\quad\text{for}\ n\in N_0, \ t\in \Zr^1\hs\TT.\]
This action induces a well-defined right action of $W_0$ on $\Hr^1\hs\TT$  (which might not preserve the group structure in $\Hr^1\hs\TT$);
see \cite[Section 3]{Borovoi22-CiM}.
We know that there exists $w_1=n_1\cdot T$ such that $[z]*w_1=[1]\in \Hr^1 \TT$;
 we can find such $n_1$ by brute force,
by computing $\Hr^1 \TT$, $W_0$ and the right action of $W_0$ on $\Hr^1\hs\TT$.
Then we can find $t\in T$ such that
\[t^{-1}\cdot(n_1^{-1} z \hs\upgam n_1)\cdot\hm\upgam t=1.\]
We set $\ntil=n_1t$; then  $\ntil^{-1}\cdot z\cdot\hm\upgam \ntil=1$.
We set $g_\rR=g\ntil$; then
\[g_\rR\in\GG(\R)\quad \text{and} \quad g_\rR^{-1}\cdot \TT'_0\cdot g_\rR=\TT_0\hs,\]
as required.
\end{subsec}

\begin{subsec}\label{equiv:3}
Here we look at a procedure that we need to tackle Problem \ref{mp:2} for
a connected reductive group that is not necessarily commutative.

Let $\GG$ be as in Section \ref{s:connected}. Let $\TT_0\subset \GG$ be a maximal
compact torus in $\GG$. This means that $\TT_0(\R)$ is a compact torus in $\GG(\R)$
and that $\TT_0(\R)$ is not properly contained in another compact torus of
$\GG(\R)$. Let $\CC\subset \GG$ be any compact torus in $\GG$.
It is known that there is  $g\in \GG(\R)$ with $g\hs \CC g^{-1}\subset \TT_0$.
Here we describe an algorithm to find such an element $g$.

For this we may assume that $\GG$ is semisimple. We assume that the group
$\GG$ is given by its Lie algebra $\gR_\rR$, which is given by a basis
consisting of elements fixed under $\gamma$.
Furthermore, $\TT_0$ and $\CC$ are given by
bases of their Lie algebras $\tgR_{\rR,0}$ and $\cgR_\rR$\hs, respectively. The problem is
equivalent to finding an element $g\in G(\R)$ such that $g\hs\cgR_\rR g^{-1} \subset \tgR_{\rR,0}$.
We assume also that we are given a Cartan decomposition $\gR_\rR = \kg\oplus \pg$
such that $\tgR_{\rR,0}\subset \kg$.
With the following steps we find an automorphism $\tau$ of $\gR_\rR$ such that
$\tau(\cgR_\rR)\subset \tgR_{\rR,0}$.
\begin{enumerate}
\item Find a maximally compact ``torus'' $\tgR_{\rR,0}'$ in $\gR_\rR$ containing $\cgR_\rR$.
  An algorithm for that is given in \cite[\S 3.1]{dfg}.
\item Find a Cartan decomposition $\gR_\rR = \kg'\oplus \pg'$ such that
  $\tgR_{\rR,0}' \subset \kg'$. An algorithm for that is given
  in \cite[\S 3.2]{dfg}.
\item Now \cite[\S 7]{dg} has an algorithm for constructing an automorphism
  $\tau : \gR_\rR\to \gR_\rR$ mapping $\kg'$ to $\kg$, $\pg'$ to $\pg$ and
  $\tgR_{\rR,0}'$ to $\tgR_{\rR,0}$.
\end{enumerate}

By extension of scalars we obtain an automorphism $\tau$ of $\g$.
If $\tau$ is inner then \cite{cmt} (see also \cite[\S 5.7]{wdg})
has an algorithm for writing $\tau$ as a product
of $\exp( \ad x)$, where $x\in \g$ is nilpotent. This algorithm fails if
$\tau$ is not inner. So first we use it to decide whether $\tau$ is
inner. If it is not inner then we run over all diagram automorphisms of
$\g$ and find an outer automorphism $\alpha$ of $\g$ such that $\alpha\circ\tau$ is
inner. We replace $\tau$ by $\alpha\circ\tau$.

Now we use the algorithm of \cite{cmt} again to write $\tau$ as a product
of $\exp( \ad x)$, where $x\in \g$ is nilpotent. We let $g\in G$ be equal
to the same product, where we replace $\exp(\ad x)$ by $\exp(x)$.
For nilpotent elements $y\in \g$ and $x\in \g$ we have $\exp( \ad x)(y) = (\exp x) y
(\exp x)^{-1}$. Hence $g\cg g^{-1} \subset \tl_0$.

It can happen, however, that $g\not\in \GG(\R)$.  In that case we follow the
procedure outlined in \ref{equiv:2}. We set $\TT=\cZ_G(\TT_0)$ and $W_0 =
N_G(T_0) /T$ which is a subgroup of the Weyl group of $\g$ with respect to the
Cartan subalgebra $\tl = \zg_\g(\tl_0)$. Then we perform the following steps.
\begin{enumerate}
\item Set $z = g^{-1}\hs \upgam\hm g$.
\item By running over $W_0$ we find $w\in W_0$ such that $[z]*w = [1]$ in
  $\Hr^1 \TT$.
\item We find $n\in N_G(T_0)$ such that $w= nT$ (see Section \ref{s:connected}).
\item By the algorithm of Subsection \ref{equiv:1} we find an element $t\in T$ such that
  $t^{-1}(n^{-1}z \upgam n ) \upgam t = 1$.
\item We replace $g$ by $gnt$. Then $g\in \GG(\R)$ and $g\hs\tl_0'\hs g^{-1} = \tl_0$.
\end{enumerate}
\end{subsec}

\begin{subsec}\label{equiv:4}
Now we consider the general case, that is we let $\GG$ be a connected reductive
group, and suppose that we have computed $\Hr^1 \GG$ as in Section \ref{s:connected}. This
used a maximal compact torus $\TT_0$, which we will use also here. We let
$c_1,\ldots,c_m$ be a fixed set of cocycles, representing the classes in
$\Hr^1 \GG$. We let $g\in G$ be another cocycle; we want to find $i$ with
$1\leq i\leq r$ and $h\in G$ such that $h^{-1}g\upgam h = c_i$.
The approach we use for this follows the proof of surjectivity in
\cite[Theorem 3.1]{Borovoi22-CiM}.

First we compute the multiplicative Jordan decomposition $g=su$ where
$s$ is semisimple, $u$ is unipotent and $su=us$. As $u$ is unipotent,
we have $\log(u) \in \g$. We set
$$u' = \exp( \tfrac{1}{2} \log(u) );$$
then $u'\in G$ and it can be shown that $(u')^{-1} su \upgam u' = s$.
Moreover, $s$ is a cocycle in $\GG$.

Now set $C=\cZ_G(s)$. Then $C$ is reductive, defined over $\R$, and $s$ lies
in the identity component $C^0$ of $C$ (see
\cite[proof of Theorem 3.1]{Borovoi22-CiM}).
We write $\CC$ for the corresponding $\R$-group.
Let $\TT'$ be a maximal torus of $\CC^0$; then $s\in T'$.

We compute the Lie algebra of $\CC$, $\cgR_\rR = \zg_{\gR_\rR}(s)$.
We compute a Cartan subalgebra $\tgR_\rR'$ of $\cgR_\rR$.
Then $\tgR_\rR'$ is the Lie algebra of a maximal
torus $\TT'$ of $\CC^0$. Given $\tgR_\rR'$ we can compute the Galois cohomology
$\Hr^1 \TT'$ (see Subsection \ref{tor:3}). As seen in Subsection \ref{equiv:1},
we can compute an element $t\in T'$ such that $s_1=t^{-1} s \upgam t$ is one of the fixed representatives
of the classes in $\Hr^1 \TT'$. As seen in Subsection \ref{tor:3}, these representatives all
lie in the compact part of $\TT'$. We compute a Cartan decomposition
$\cgR_\rR = \kg'\oplus \pg'$ such that
$\tgR_\rR' = (\tgR_\rR'\cap \kg')\oplus (\tgR_\rR'\cap\pg')$.
Set $\tgR_{\rR,0}' = \tgR_\rR'\cap \kg'$. Then $\tgR_{\rR,0}'$ is the Lie algebra of a
compact torus $\TT_0'$ and $s_1\in T_0'$ is a cocycle.

By the algorithm outlined in Subsection \ref{equiv:3},
we find an element $v\in \GG(\R)$ with $v^{-1}
\TT_0' v \subseteq \TT_0$.
 Then $s_2=v^{-1} s_1 v$ is a cocycle in $\TT=\cZ_\GG(\TT_0)$.
By running over $W_0$, we find $w\in W_0$ and $i$ such that $[s_2]\cdot w =
[c_i]$ in $\Hr^1 \TT$. We find $n\in N_G(T)$ such that $w= nT$. Finally we
use the algorithm of Subsection  \ref{tor:1} to find $t_2\in T$ such that $t_2^{-1}(n^{-1}
s_2 \upgam n)\upgam t  = c_i$. We set $h=u't v n t_2$.
\end{subsec}

\section{Second nonabelian cohomology}
\label{s:H2}

In this section we consider the case when $\Gamma$ is a group of order 2.
For  the case when $\Gamma$ is an arbitrary profinite group, see  \cite{Springer}, or \cite{Borovoi-Duke},
or \cite[Section 2]{FSS}, or \cite[Section 2.1]{Florence}, or \cite[Section 2.1]{LA}.

\begin{subsec}\label{ss:Z2}
Let $A$ be an {\em abstract} group and not necessarily a $\Gamma$-group.
We write
\begin{align*}
\Zr^2(\Gamma,A)=\big\{\hs (a,f)\, \mid\,  a\in A,\ f\in\Aut A,\ f^2=\inn(a),\ f(a)=a\big\}.
\end{align*}
Here $\inn(a)\colon A\to A$ is the inner automorphism $a'\mapsto aa'a^{-1}$.
For short, we write $\Zr^2\hm A$ for $\Zr^2(\Gamma, A)$.

We define a left action of $A$ on $\Zr^2\hm A$ as follows.
Let $s\in A$ and $(a,f)\in\Zr^2\hm A$.
We set
\[s*(a,f)= (a',f')\ \ \text{where}\ \, a'=s\cdot f(s)\cdot a,\ \, f'=\inn(s)\circ f.\]
We show that $(a',f')\in\Zr^1\hm A$. Indeed,
\begin{align*}
&f^{\prime\,2}=\inn(s)\circ f\circ \inn(s)\circ f=\inn(s)\circ\inn\big(f(s)\big)\circ f^2
              =\inn\big(s\cdot f(s)\cdot a\big)=\inn(a'),\\
&f'(a')=s\hs f(a')\hs s^{-1}=s\hm\cdot\hm f(s)\, f(f(s))\, f(a)\hm\cdot\hm s^{-1}
       =s\hs f(s)\hm\cdot\hm asa^{-1}\hm\cdot\hm as^{-1}=s\hs f(s)\, a=a',
\end{align*}
as required.

Let $s',s\in A$.
Write \,$s*(a,f)=(a',f')$, \,$s'*(a',f')=(a'',f'')$.
We have
\begin{align*}
&f''=\inn(s')\circ\inn(s)\circ f=\inn(s's)\circ f,\\
&a''=s'\cdot f'(s')\cdot a'=s'\cdot s\hs f(s')\hs s^{-1}\cdot s\hs f(s)\hs a
    =(s's)\cdot f(s's)\cdot a.
\end{align*}
We see that
\[(s's)*(a,f)=s'*(s*(a,f)).\]
Thus $*$ is indeed a left action of $A$ on $\Zr^2\hm A$.

Let  $(a,f),(a',f')\in \Zr^2\hm A$.
We write $(a,f)\sim (a',f')$ if there exists $s\in A$ such that
\[(a',f')=s*(a,f),\ \ \text{that is,}\ \ a'=s\cdot f(s)\cdot a,\ \, f'=\inn(s)\circ f.\]
\end{subsec}

\begin{definition} \label{d:H2}
$\Hr^2(\Gamma, A)=\Zr^2(\Gamma, A)\hs/\hm\sim$.
\end{definition}
Note that our definition of $\Hr^2(\Gamma, A)$ is not standard.
For short, we write $\Hr^2\hm A$ for $\Hr^2(\Gamma, A)$.
We write $[a,f]\in \Hr^2\hm A$ for the cohomology class
of a 2-cocycle $(a,f)\in \Zr^2\hm A$.

We say that a cocycle $(a,f)\in \Zr^2\hm A$ is {\em neutral} if $a=1$.
Then $f^2=\id_A$.
We say that a cohomology class $\eta\in \Hr^2\hm A$ is {\em neutral}
if it is the class of a neutral cocycle.
We denote by $\Nr^2\hm A\subseteq \Hr^2\hm A$
the subset of neutral cohomology classes, which might contain more than one element.

\begin{subsec}
Set $\Out A=\Aut A\hs/\Inn A$.
By a {\em band (lien)} we mean an element $\bkappa\in\Out A$ such that $\bkappa^2=1$.
We write
\begin{align*}
&\Zr^2(A,\bkappa)=\{(a,f)\in \Zr^2\hm A\mid f\cdot\Inn A=\bkappa\},\\
&\Hr^2(A,\bkappa)=\Zr^2(A,\bkappa)/\hm\sim.
\end{align*}

If $A$ is abelian, then $\Inn A=1$ and $\Out A=\Aut A$.
If, moreover, $\bkappa\in \Out A$ is a band,
then $\bkappa\in \Aut A$ and $\bkappa^2=\id_A$.
We regard the pair $(A,\bkappa)$ as a $\Gamma$-module,
where $\upgam\hm a=\bkappa(a)$ for $a\in A$.
Then
\[ \Hr^2(A,\bkappa)=\Hr^2\big(\Gamma,(A,\bkappa)\big),\]
the usual (abelian) second group cohomology of $\Gamma$
with coefficients in the $\Gamma$-module $(A,\bkappa)$.
\end{subsec}

\begin{subsec}\label{ss:Delta}
Let
\begin{align}\label{a:ABC}
1\to A\labelto{i} B\labelto{j} C\to 1
\end{align}
be a short exact sequence of $\Gamma$-groups.
We do not assume that $A$ is abelian.
We construct a map
\[\Delta\colon \Hr^1(\Gamma,C)\to \Hr^2\hm A\]
as follows.
We identify $A$ with $i(A)\subseteq B$.
Let $\xi\in \Hr^1(\Ga,C),\ \xi=[c],\  c\in \Zr^1(\Ga,C)$.
We write $c=j(b)$ for some $b\in B$, and set
\[ a=a_b\coloneqq b\hs\upgam b,\quad f=f_b\coloneqq\inn(b)\circ\gamma,
    \ \text{that is, $f(x)=b\cdot\!\upgam x\cdot b^{-1}$ for}\ x\in A.\]
Since $c\upgam\hm c=1$, we have $a\in A$.
Moreover, for $x\in A$ we have
\[f^2(x)=b\cdot\!\upgam(b\hs\upgam\hm x\hs b^{-1})\cdot b^{-1}
     =b\upgam b \cdot x\cdot \upgam b^{-1} b^{-1}
     =(b\upgam b)\cdot x \cdot(b\upgam b)^{-1}=a\hs x\hs a^{-1}.\]
Thus $f^2=\inn(a)$.
Furthermore,
\[f(a)=b\cdot\!\upgam(b\upgam b)\cdot b^{-1}= b\upgam b\hs b\hs b^{-1}=b\upgam b=a.\]
Thus $(a,f)\in \Zr^2\hm A$.
We set
\[\Delta_b(c)=(a,f)\in \Zr^2\hm A,\quad \Delta[c]=[\Delta_b(c)]=[a,f]\in\Hr^2\hm A.\]

If $b'\in B$ is another lift of $c$, then $b'=sb$ for some $s\in A$.
We obtain
\begin{align*}
&f'\coloneqq f_{b'}=\inn(b')\circ\gamma=\inn(s)\circ\inn(b)\circ\gamma=\inn(s)\circ f'\\
&a'\coloneqq a_{b'}=b'\cdot\!\upgam  b'=s\hs b\hs\upgam\! s\upgam b
            = s\cdot b\hs\upgam\! s\, b^{-1}\cdot b\hs\upgam b=s\cdot f(s)\cdot a.
\end{align*}
Thus $(a',f')\sim (a,s)$ and our map $\Delta$ is well defined.
\end{subsec}

\begin{proposition}\label{p:Delta}
The sequence
\[ \Hr^1(\Ga,B)\labelto{j_*}\Hr^1(\Ga,C)\labelto{\Delta}\Hr^2\hs A\]
is exact, that is, $\im j_*=\ker \Delta$, where
\[ \ker\Delta=\{\xi\in \Hr^1(\Gamma,C)\mid \Delta(\xi)\in \Nr^2\hm A\}.\]
\end{proposition}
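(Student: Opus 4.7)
The plan is to verify the two inclusions $\im j_* \subseteq \ker\Delta$ and $\ker\Delta \subseteq \im j_*$ directly from the cocycle-lifting construction of $\Delta$ given in Subsection~\ref{ss:Delta}.

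For $\im j_* \subseteq \ker\Delta$, the key observation is that if $b \in \Zr^1(\Gamma, B)$ and $c = j(b)$, then $b$ itself is a distinguished lift of $c$ to $B$. Feeding this lift into the defining formula for $\Delta_b$ gives $a_b = b \hs\upgam b = 1$, so $\Delta[c] = [1, f_b]$ is visibly neutral.

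For $\ker\Delta \subseteq \im j_*$, the idea is to adjust an arbitrary lift $b$ of $c$ by an element of $A$ until it becomes a $1$-cocycle in $B$. Given $\xi = [c]$ with $\Delta(\xi) \in \Nr^2\hm A$, pick any lift $b \in B$ of $c$ and form $(a, f) = \Delta_b(c) = (b\hs\upgam b,\, \inn(b)\circ\gamma)$. The neutrality hypothesis yields an equivalence $(a, f) \sim (1, f')$, and since $\sim$ is orbit-equivalence for the $A$-action $*$ (this uses the action axiom already verified in Subsection~\ref{ss:Z2}), we obtain $s \in A$ such that $(1, f') = s * (a, f)$, which spells out as $s \cdot f(s) \cdot a = 1$. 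Set $b' := sb$; since $s \in A = \ker j$, we have $j(b') = c$, so $b'$ is another lift. A short computation, using $f(s) = b\hs\upgam s\hs b^{-1}$, gives
\[
b' \hs\upgam b' \,=\, sb \cdot \upgam s\hs \upgam b \,=\, s\cdot (b\hs\upgam s\hs b^{-1})\cdot b\hs\upgam b \,=\, s\cdot f(s)\cdot a \,=\, 1,
\]
so $b' \in \Zr^1(\Gamma, B)$ and $\xi = j_*[b']$.

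The only point requiring care is to pick the correct direction of the orbit relation: one must choose $s$ so that $s*(a,f)=(1,f')$ rather than the reverse, as this is what produces the identity $s\hm\cdot\hm f(s)\hm\cdot\hm a = 1$ needed to cancel the obstruction $a = b\hs\upgam b$ after twisting $b$ by $s$. Beyond this bookkeeping, the proof is a routine diagram chase with no further obstacle.
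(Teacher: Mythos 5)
Your proposal is correct and follows essentially the same route as the paper: the first inclusion by taking the cocycle $b$ itself as the lift so that $a_b=b\hs\upgam b=1$, and the second by extracting $s\in A$ with $s\cdot f(s)\cdot a=1$ from neutrality and checking that $b'=sb$ is a $1$-cocycle lifting $c$. The computation $b'\hs\upgam b'=s\cdot f(s)\cdot a$ and the remark about choosing $s$ in the correct direction of the orbit relation match the paper's argument exactly.
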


\begin{proof}
Let $c\in \Zr^1(\Ga,C)$ and assume that  $[c]\in\im j_*$\hs.
Then $c=j(b)$ for some $b\in \Zr^1(\Ga, B)$.
We have $b\upgam b=1$, whence $a_b=b\upgam b=1$
and $\Delta[c]=[1,f_b]\in \Nr^2\hm A$, as required.

Conversely, let $c\in \Zr^1(\Ga,C)$ and assume that $\Delta[c]\in \Nr^2\hm A$.
Write  $c=j(b)$ where $b\in B$, then
\[\Delta[c]=[a,f]\ \ \text{where}\ \ a=b\upgam b,\ f=\inn(b)\circ\gamma.\]
The condition $\Delta[c]\in \Nr^2\hm A$ means that the exists $s\in A$ such that
\[s\cdot f(s)\cdot a=1,\ \ \text{that is,}\ \ s\cdot
      (b\hs \upgam\hm s\hs b^{-1})\cdot (b \upgam b)=1.\]
Thus can be written as
\[(s\hs b)\cdot\hm\upgam(s\hs b)=1.\]
Set $b'=sb$, then $j(b')=c$ and $b'\cdot\hm\upgam b'=1$.
We see that $[c]=j_*[b']$, where $b'\in \Zr^1(\Ga, B)$ and $[b']\in \Hr^1(\Ga,B)$.
Thus $[c]\in\im j_*$, as required.
\end{proof}

\begin{corollary}[of the proof of Proposition \ref{p:Delta}]
\label{c:lifting-1-cocycle}
For  the sequence \eqref{a:ABC}, let $c\in \Zr^1(\Ga,C)$, $c=j(b)$, $b\in B$.
Set
\[\Delta(b)=(a,f) \ \ \text{where}\ \ a=b\upgam b,\ f=\inn(b)\circ\gamma.\]
Assume that there exists $s\in A$ such that
\[s\cdot f(s)\cdot a=1.\]
Set $b'=sb\in B$. Then
\[c=j(b')\ \  \text{and}\ \  b'\in \Zr^1(\Ga,B).\]
\end{corollary}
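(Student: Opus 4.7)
The plan is to verify the two assertions by direct computation, essentially reading off what has already been done inside the proof of Proposition \ref{p:Delta}; there is no real obstacle, since the corollary is labelled as coming ``from the proof''.

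First I would check that $j(b') = c$. Since $A$ is identified with $i(A) \subseteq B$, the element $s \in A$ satisfies $j(s) = 1$ in $C$. Therefore
\[
j(b') \;=\; j(sb) \;=\; j(s)\,j(b) \;=\; 1 \cdot c \;=\; c.
\]

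Next I would verify that $b' \in \Zr^1(\Gamma, B)$, i.e.\ $b' \cdot \upgam b' = 1$. Unpacking the definitions $a = b\,\upgam b$ and $f(s) = b \cdot \upgam s \cdot b^{-1}$, one computes
\[
b' \cdot \upgam b' \;=\; sb \cdot \upgam(sb) \;=\; s \cdot b\,\upgam s\, b^{-1} \cdot b\,\upgam b \;=\; s \cdot f(s) \cdot a,
\]
which equals $1$ by the hypothesis. This is precisely the rearrangement used at the end of the proof of Proposition \ref{p:Delta}, so the corollary follows immediately by packaging those two calculations as a standalone statement.
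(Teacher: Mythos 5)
Your proposal is correct and coincides with the paper's argument: the paper also obtains the corollary by rewriting $s\cdot f(s)\cdot a=1$ as $(sb)\cdot\upgam(sb)=1$ and noting $j(sb)=j(b)=c$ since $s\in A=\ker j$. Nothing further is needed.
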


\begin{subsec}
Now let $A$ be an  {\em algebraic group} over $\C$.
We write
\begin{align*}
\Zr^2(\R, A)=\big\{\hs (a,f)\, \mid\,  a\in A,\ f\in\SAut_\anti(A),\ f^2=\inn(a),\ f(a)=a\big\}
\end{align*}
where $\SAut_\anti (A)$ denotes the set of {\em anti-regular semi-automorphisms} of $A$; see Appendix \ref{app:A}.
This definition coincides with  that of Subsection \ref{ss:Z2}
when the algebraic group $A$ is finite; see Remark \ref{r:finite} in  Appendix \ref{app:A}.
As above, we define
\[  \Hr^2(\R, A)=\Zr^2(\R,A)\hs/\hm\sim\]
where the equivalence relation $\sim$  is defined in terms of the action of $A$ on $\Zr^2(\R, A)$.
Note that our definition of $\Hr^2(\R, A)$ is not standard.
For short, we write $\Zr^2\hm A$ for  $\Zr^2(\R,A)$, and we write  $\Hr^2\hm A$ for $\Hr^2(\R, A)$.
\end{subsec}

\begin{subsec}
Let $A$ be an algebraic $\C$-group.  We consider the {\em group of semi-automorphisms}
\[\SAut A = \Aut A\hs\cup\hs\SAut_\anti(A)\]
in which the multiplication  law is given by composition.
Here $\Aut A$ is the group of regular (usual) automorphisms of $A$.
It is easy to see that the subgroup $\Inn A\subset \SAut A$ is normal.
We define
\[\SOut A = \SAut A/\Inn A,\qquad \SOut_\anti(A)= \SAut_\anti(A)/\Inn A\subset \SOut A.\]
For a $\C$-group $A$, by a {\em band (lien)} we mean an element $\bkappa$
of the set $\SOut_\anti A$ such that $\bkappa^2=1$.
As above, we write
\begin{align*}
&\Zr^2(A,\bkappa)=\{(a,f)\in \Zr^2\hm A\mid f\cdot\Inn A=\bkappa\},\\
&\Hr^2(A,\bkappa)=\Zr^2(A,\bkappa)/\hm\sim.
\end{align*}

If $A$ is abelian, then
\[\Inn A=1,\quad  \SOut A=\SAut A,\quad \text{and}\quad\SOut_\anti(A)=\SAut_\anti(A).\]
If, moreover, $\bkappa\in \SOut_\anti(A)$ is a band,
then $\bkappa\in \Aut_\anti(A)$ and $\bkappa^2=\id_A$.
Thus $\bkappa$ is a real structure on $A$, and
we obtain an abelian  $\R$-group $\AA= (A,\bkappa)$.
Then
\[ \Hr^2(A,\bkappa)=\Hr^2(\R,\AA),\]
the usual (abelian) second Galois cohomology of $\AA=(A,\bkappa)$.
\end{subsec}

\section{Sansuc's lemma and $\Hr^1$ for connected non-reductive groups}
\label{s:non-reductive}

In this section we reduce  Problem \ref{mp:1}
for a connected $\R$-group, not necessarily reductive,
to the case of a connected {\em reductive} $\R$-group
treated in Section \ref{s:connected}.
We need the following proposition.

\begin{proposition}\label{p:Sansuc}
Let $\GG$ be any linear algebraic $\R$-group (not necessarily connected).
Let $\UU\subseteq \GG$ be a unipotent normal $\R$-subgroup.
Write $\GGbar=\GG/\UU$ and consider the canonical epimorphism
\[r\colon \GG\to \GGbar.\]
Then the induced map on cohomology
\begin{align*}
r_*\colon \Hr^1\hs \GG\to \Hr^1\hs \GGbar
\end{align*}
is bijective.
\end{proposition}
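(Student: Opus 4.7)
The plan is to reduce to the case where $\UU$ is abelian by dévissage, and then exploit the vanishing of the Galois cohomology of a connected abelian unipotent $\R$-group in every degree.

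I would argue by induction on $\dim\UU$, the base case $\UU=1$ being trivial. In the inductive step, if $\UU$ is nonabelian then its derived subgroup $\UU'=[\UU,\UU]$ is a proper nontrivial characteristic subgroup of $\UU$, hence normal in $\GG$. Factoring $r\colon\GG\to\GGbar$ as $\GG\to\GG/\UU'\to\GGbar$, both kernels $\UU'$ and $\UU/\UU'$ are unipotent of strictly smaller dimension than $\UU$, and by the inductive hypothesis the induced maps on $\Hr^1$ are both bijective; composing yields the desired bijectivity. It therefore suffices to treat the case $\UU$ abelian.

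So assume $\UU$ is commutative, connected, and unipotent over $\R$. The exponential gives an isomorphism of $\R$-groups $\Lie(\UU)\isoto\UU$, so $\UU(\C)$, as a $\Gamma$-module, is the complexification of the real vector space $\UU(\R)$; it thus decomposes into a direct sum of copies of $\R$ (trivial $\gamma$-action) and $i\R$ (action by $-1$). A direct computation with the definitions in Section~\ref{s:Abelian} gives $\Hr^k(\R,\UU)=0$ for every $k\in\Z$, and the same vanishing persists for any inner twist $_g\UU$ of $\UU$ by $g\in\GG(\C)$, which is again an abelian connected unipotent $\R$-group.

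For injectivity of $r_*$, take $z_1,z_2\in\Zr^1\GG$ with $[r(z_1)]=[r(z_2)]$. Twisting $\GG,\UU,\GGbar$ by $z_1$, I may assume $z_1=1$; exactness of $\Hr^1({}_{z_1}\UU)\to\Hr^1({}_{z_1}\GG)\to\Hr^1({}_{z_1}\GGbar)$ combined with $\Hr^1({}_{z_1}\UU)=0$ then forces $[z_2]=[1]$. For surjectivity, given $\bar z\in\Zr^1\GGbar$, lift it to $g\in\GG(\C)$ and set $a\coloneqq g\hs\upgam g\in\UU(\C)$. Introducing the twisted Galois action $\upgam'u= g\hs\upgam u\hs g^{-1}$ on $\UU$, a short computation gives $\upgam'a=a$, so $a\in\Zr^2({}_g\UU)$. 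The vanishing $\Hr^2({}_g\UU)=0$ provides $u\in\UU(\C)$ with $u\cdot\upgam'(u)=a^{-1}$, whence $(ug)\cdot\upgam(ug)=u\cdot\upgam'(u)\cdot a=1$, and $ug\in\Zr^1\GG$ lifts $\bar z$. The main technical point is to verify in each twist that $\upgam'$ is anti-regular (it is, as the composition of the anti-regular $\gamma$ with an inner automorphism by an element of $\GG(\C)$), so that $_g\UU$ is a genuine real form to which the cohomology vanishing applies.
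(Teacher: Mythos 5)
Your argument is correct, but it is deliberately \emph{not} the route the paper takes. You follow the classical Sansuc-style d\'evissage: induct on $\dim\UU$, reduce via $[\UU,\UU]$ to the case of a commutative unipotent $\UU$, and then use that every (inner twist of a) vector group over $\R$ has vanishing Tate cohomology in all degrees, so that injectivity follows from twisting plus exactness of $\Hr^1({}_{z_1}\UU)\to\Hr^1({}_{z_1}\GG)\to\Hr^1({}_{z_1}\GGbar)$, and surjectivity from killing the $\Hr^2$-obstruction $a=g\hs\upgam g$ in the twisted group ${}_g\UU$ (where, as you implicitly use, commutativity of $\UU$ is what makes $\upgam'$ an involution, so the reduction to the abelian case must indeed come first, as you arranged). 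The paper explicitly avoids this induction: it works with the possibly nonabelian $\UU$ directly and in one step, exploiting that $\exp$ and $\log$ are mutually inverse, $f$-equivariant regular maps for $f=\inn(g)\circ\gamma$. Since $f(u)=u$ (resp.\ $f(u)=u^{-1}$ in the injectivity step), the element $s=\exp(-\tfrac12\log u)$ is an explicit ``square root'' with the right equivariance, and $g'=sg$ (resp.\ $s^{-1}y\hs\upgam s=y'$) finishes the proof immediately. What the paper's approach buys is precisely what the algorithms need: explicit formulas for the lifted cocycle and for the element realizing the equivalence (Corollaries \ref{c:Sansuc-2} and \ref{c:Sansuc-3}), with no recursion through derived series and no appeal to twisted cohomology vanishing. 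What your approach buys is generality and conceptual transparency (it is essentially the characteristic-zero argument of Sansuc and of \cite{BDR}, valid over other fields with the appropriate vanishing), but as written it is less constructive, which matters for the computational aims of this paper.
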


This is Sansuc's lemma \cite[Lemma 1.13]{Sansuc},
see also \cite[Proposition 3.1]{BDR}.
The proofs  in \cite{Sansuc} and \cite{BDR} over a field of characteristic 0
use induction on the dimension of $\UU$.
Here we prove Sansuc's lemma over $\R$ without induction, in one step for surjectivity
and in one step for injectivity.

\begin{proof}
Let $\xibar\in \Hr^1\hs\GGbar$, \,$\xibar=[\gbar]$,
where $\gbar\in \Zr^1\hs\GGbar\,\subseteq\Gbar$.
As in Subsection \ref{ss:Delta}, we lift $\gbar$ to some $g\in G$
and set $u=g\cdot\hm\upgam g\in U,\ f=\inn(g)\circ\gamma\colon\, U\to U$.
Then $f^2=\inn(u)$ and $f(u)=u$; see Subsection \ref{ss:Delta}.

We prove in one step that the map $r_*$ of the proposition is surjective.
By abuse of notation we write also $f$ for $df\colon\Lie U\to\Lie U$.
Since $U$ is unipotent, we have isomorphisms of $\C$-varieties
\[ \exp\colon\Lie U\to U\quad\text{and}\quad \log\colon U\to\Lie U,\]
and these isomorphisms are $f$-equivariant.
Set $s=\exp(-\frac12\log u)\in U$; then $s^2=u^{-1}$ and $f(s)=s$.
We have
\[s\cdot f(s)\cdot u=1.\]
We set $g'=sg$. Then $r(g')=\gbar$ and
$$g'\cdot\hm\upgam\hm g'=sg\cdot \upgam\hm s \upgam\hm g= s\cdot g\upgam\hm s\hs  g^{-1}\cdot g\upgam\hm g=s\cdot f(s)\cdot u=1.$$
We have lifted the 1-cocycle $\gbar\in \Zr^1\hs\GGbar$
to the 1-cocycle $g'\in \Zr^1\hs\GG$.
Thus the map $r_*$  is indeed surjective.

Now we prove in one step that the map $r_*$ of the proposition is injective.
Let $y\in \Zr^1\GG$ be a lift of $\gbar\in\Zr^1\hs\GGbar$,
and let $y'\in \Zr^1\hs\GG$ be another lift of $\gbar$.
We have $y'=u y$ for some $u\in U$.
Write $f=\inn(y)\circ\gamma\colon U\to U$.
We have
\[1=y'\hm\cdot\hm\upgam y'=u\hs y\upgam\hm u\upgam\hm y
       =u\cdot f(u)\cdot y\upgam\hm y=u\cdot f(u).\]
Hence $f(u)=u^{-1}$.

Set $s=\exp(-\frac12\log u)$; then $s^2=u^{-1}$ and $f(s)=s^{-1}$.
We obtain
\[ s^{-1}\hs y\hs \upgam\hm s
      =s^{-1}\cdot y\upgam\hm s\hs y^{-1} \cdot y
      =s^{-1}\cdot f(s) \cdot y =s^{-2}\cdot y=u\cdot y=y'. \]
We see that  $y\sim y'$.
We conclude that the lift $[y]\in \Hr^1\hs\GG$
of $[\gbar]\in \Hr^1\hs\GGbar$ is unique.
Thus the map $r_*$  is indeed injective.
\end{proof}

The proof yields two immediate algorithmic constructions which we
summarize in two corollaries.

\begin{corollary}[of the proof of Proposition \ref{p:Sansuc}]
\label{c:Sansuc-2}
Let $\gbar\in \Zr^1\hs\GGbar$, and let $g\in G$ be such that $r(g)=\gbar$.
Set
\[u=g\cdot\hm\upgam g\in U,\quad s=\exp(-\tfrac12\log u)\in U,\quad g'=sg.\]
Then $g'\in\Zr^1\hs\GG$ and $r(g')=\gbar$.
\end{corollary}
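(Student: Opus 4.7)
The plan is to extract directly the surjectivity argument from the proof of Proposition \ref{p:Sansuc}, since this corollary is simply that argument repackaged as a standalone construction. First I would verify that $u$ indeed lies in $U$: applying $r$ to the definition $u = g \cdot \upgam g$ gives $r(u) = \gbar \cdot \upgam\gbar = 1$ (because $\gbar$ is a 1-cocycle in $\GGbar$), so $u \in \ker r = U$. Since $U$ is unipotent, the exponential $\exp \colon \Lie U \to U$ is an isomorphism of $\C$-varieties with inverse $\log$, which makes $s = \exp(-\tfrac12 \log u)$ a well-defined element of $U$ satisfying $s^2 = u^{-1}$.

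Next I would introduce, as in Subsection \ref{ss:Delta}, the anti-regular automorphism $f = \inn(g) \circ \gamma$ acting on $U$ (which makes sense because $U$ is a normal $\R$-subgroup of $\GG$, so $\inn(g)$ preserves $U$ and $\gamma$ acts on $U(\C)$). The key identity $f(u) = u$ holds because
\[
f(u) = g \cdot \upgam(g \upgam g) \cdot g^{-1} = g \cdot (\upgam g \cdot g) \cdot g^{-1} = g \upgam g = u.
\]
Since $f$ is an anti-regular automorphism of the unipotent group $U$, it commutes with $\log$ and $\exp$, whence $f(s) = \exp(-\tfrac12 \log f(u)) = \exp(-\tfrac12 \log u) = s$.

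Finally, I would verify both conclusions by direct computation. For the cocycle condition:
\[
g' \cdot \upgam g' = sg \cdot \upgam s \cdot \upgam g = s \cdot (g \hs \upgam s \hs g^{-1}) \cdot (g \upgam g) = s \cdot f(s) \cdot u = s \cdot s \cdot u = s^2 u = u^{-1} u = 1,
\]
so $g' \in \Zr^1 \GG$. For the second assertion, $s \in U = \ker r$ yields $r(g') = r(s) r(g) = \gbar$. The only step that requires any care is confirming $f(s) = s$, which reduces to the identity $f(u) = u$ together with the $f$-equivariance of $\log$ and $\exp$ on the unipotent group $U$; everything else is a routine substitution.
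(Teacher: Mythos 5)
Your proposal is correct and follows essentially the same route as the paper: it is exactly the surjectivity step of the proof of Proposition \ref{p:Sansuc}, with the same element $s=\exp(-\tfrac12\log u)$, the same map $f=\inn(g)\circ\gamma$, and the same final computation $g'\cdot\upgam g'=s\cdot f(s)\cdot u=1$. The only difference is that you spell out the facts $u\in U$ and $f(u)=u$ directly, whereas the paper delegates them to the construction in Subsection \ref{ss:Delta}; this is a harmless expansion, not a different argument.
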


\begin{corollary}[of the proof of Proposition \ref{p:Sansuc}]
\label{c:Sansuc-3}
Let $g,g'\in \Zr^1\hs\GG$, and let $\gbar,\gbar'$
denote their images in $\Zr^1\hs\GGbar$.
Assume that $\gbar\sim \gbar'$, that is,
there exists $\sbar\in\Gbar$ such that
\[ \gbar'=\sbar^{-1}\cdot\gbar\cdot\hm\upgam\sbar.\]
Let $s\in G$ be a lift of $\sbar$.
Write
\[g''=s^{-1}\cdot g\cdot\hm\upgam s\in G\quad\text{and}\quad g'=u g'';\]
then $u\in U.$
Set
\[t=\exp(-\tfrac12\log u)\in U \quad\text{and}\quad s'=st.\]
Then
\[s^{\prime\,\hs-1}\cdot g\cdot\hm\upgam s'=g'.\]
\end{corollary}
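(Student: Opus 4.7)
\textbf{Proof plan for Corollary \ref{c:Sansuc-3}.}
The plan is to recognize that this is really just the injectivity half of the proof of Proposition \ref{p:Sansuc} applied to the auxiliary cocycle $g''$, preceded by a routine verification that $g''$ is a lift of $\bar g'$ as a $1$-cocycle. First I would check that $g''\in \Zr^1\hs\GG$: using $g\cdot\upgam g=1$,
\[
g''\cdot\upgam g''=s^{-1}g\,\upgam s\cdot\upgam s^{-1}\,\upgam g\, s=s^{-1}(g\,\upgam g)\hs s=1.
\]
Since $r(g'')=\sbar^{-1}\cdot\gbar\cdot\upgam\sbar=\gbar'=r(g')$, the element $u$ defined by $g'=u\hs g''$ does lie in $U$, as asserted.

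Next I would apply the argument used in the second half of the proof of Proposition~\ref{p:Sansuc} to the two $1$-cocycles $g''$ and $g'=u g''$ lifting the same class $\gbar'$. Setting $f=\inn(g'')\circ\gamma\colon U\to U$, the equality $g'\cdot\upgam g'=1$ forces $u\cdot f(u)=1$, so with $t=\exp(-\tfrac12\log u)\in U$ one has $t^2=u^{-1}$ and $f(t)=t^{-1}$. Exactly as in the proof, this yields
\[
t^{-1}\cdot g''\cdot\upgam t\,=\,t^{-1}\cdot g''\hs\upgam t\,(g'')^{-1}\cdot g''\,=\,t^{-1}\cdot f(t)\cdot g''\,=\,t^{-2}g''\,=\,u\hs g''\,=\,g'.
\]

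Finally I would substitute $s'=st$ and compute:
\[
(s')^{-1}\cdot g\cdot\upgam s'\,=\,t^{-1}s^{-1}\cdot g\cdot\upgam s\,\upgam t\,=\,t^{-1}\cdot g''\cdot\upgam t\,=\,g',
\]
as required. There is no real obstacle here — everything is a direct consequence of results already in hand. The only thing that needs a little care is the bookkeeping: ensuring $g''$ qualifies as a $1$-cocycle in $\GG$ so that the injectivity argument of Proposition~\ref{p:Sansuc} applies verbatim to the pair $(g'',g')$, with $t$ playing the role of the element $s$ constructed there.
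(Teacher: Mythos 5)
Your proposal is correct and is essentially the paper's intended argument: the corollary is stated as a consequence of the proof of Proposition \ref{p:Sansuc}, and your steps reproduce exactly the injectivity half of that proof applied to the pair $(g'',g'=ug'')$, with the preliminary check that $g''\in\Zr^1\hs\GG$ lifts $\gbar'$ and the final substitution $s'=st$. Nothing further is needed.
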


\begin{subsec}
Let $\GG$ be a  connected linear algebraic group over $\R$,
not necessarily reductive.
Let $\GG^\uu$ denote the unipotent radical of $\GG$,
that is, the largest  normal unipotent subgroup.
Note that  $\GG^\uu$ is connected, because in characteristic 0 any unipotent group is connected
(which follows immediately from the existence of mutually inverse regular maps $\exp$ and $\log$).
Set $\GG^\red=\GG/\GG^\uu$, and let
\[ r\colon \GG\to\GG^\red\]
denote the canonical epimorphism.
Consider the induced map on the first Galois cohomology
\[r_*\colon \Hr^1\hs\GG\to\Hr^1\hs\GG^\red;\]
by Sansuc's lemma (Proposition \ref{p:Sansuc}) this map is bijective.
\end{subsec}

\begin{subsec}\label{ss:non-red-decomp}
Let $\GG$ be a connected $\R$-group, not necessarily reductive.
Write $\gR=\Lie\GG$, $\gR^\uu=\Lie\GG^\uu$, where $\GG^\uu$ is the unipotent
radical of $\GG$. As seen in Section \ref{app:B}, we have algorithms to
construct  a decomposition
\begin{equation*}
\gR=\gR^\uu\dotplus\gR^\rr
\end{equation*}
where $\gR^\rr\subseteq\gR$ is a the Lie algebra of a connected reductive $\R$-subgroup $\GG^\rr$,
and $\gR^\uu$ is the Lie algebra of a unipotent group.
Clearly, we have $\GG^\rr\cap\GG^\uu=\{1\}$,
and therefore the canonical homomorphism $\GG\to\GG^\red$
induces an isomorphism $\GG^\rr\isoto \GG^\red$.
\end{subsec}

\begin{lemma}\label{l:LL}
The inclusion map $\GG^\rr\into \GG$ induces a bijection $\Hr^1\GG^\rr\isoto \Hr^1\GG$.
\end{lemma}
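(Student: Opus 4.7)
The plan is to combine Sansuc's lemma (Proposition \ref{p:Sansuc}) with the fact, recorded in Subsection \ref{ss:non-red-decomp}, that the canonical projection $r\colon \GG\to\GG^\red=\GG/\GG^\uu$ restricts to an isomorphism of $\R$-groups $r|_{\GG^\rr}\colon \GG^\rr\isoto \GG^\red$ (because $\gR^\rr$ is a Lie-algebra complement to $\gR^\uu$ in $\gR$, and correspondingly $\GG^\rr\cap\GG^\uu=\{1\}$ with $\dim\GG^\rr=\dim\GG^\red$).

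Concretely, let $\iota\colon \GG^\rr\hookrightarrow\GG$ denote the inclusion. Then we have a commutative diagram of $\R$-group homomorphisms
\[
\begin{array}{ccc}
\GG^\rr & \xrightarrow{\iota} & \GG \\
& \searrow{\scriptstyle r|_{\GG^\rr}} & \downarrow r \\
& & \GG^\red
\end{array}
\]
which on $\Hr^1$ yields the commutative diagram
\[
\begin{array}{ccc}
\Hr^1\GG^\rr & \xrightarrow{\iota_*} & \Hr^1\GG \\
& \searrow{\scriptstyle (r|_{\GG^\rr})_*} & \downarrow r_* \\
& & \Hr^1\GG^\red\rlap{.}
\end{array}
\]
By Proposition \ref{p:Sansuc} applied to the normal unipotent subgroup $\UU=\GG^\uu\subseteq\GG$, the map $r_*$ is a bijection. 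Since $r|_{\GG^\rr}$ is an isomorphism of $\R$-groups, the functoriality of $\Hr^1$ makes $(r|_{\GG^\rr})_*$ a bijection as well. From the commutativity of the triangle we conclude that $\iota_*$ is a bijection.

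There is no real obstacle here; the whole content is that $r|_{\GG^\rr}$ is an isomorphism (already established in Subsection \ref{ss:non-red-decomp}) together with Sansuc's lemma. The only thing worth double-checking is that the Lie-algebra decomposition genuinely lifts to an isomorphism of algebraic groups $\GG^\rr\isoto\GG^\red$, but this is immediate from $\dim\GG^\rr=\dim\GG-\dim\GG^\uu=\dim\GG^\red$ together with $\ker(r|_{\GG^\rr})=\GG^\rr\cap\GG^\uu=\{1\}$ and connectedness of $\GG^\rr$ and $\GG^\red$.
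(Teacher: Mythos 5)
Your proof is correct and is essentially the paper's own argument: the paper's proof is exactly the commutative diagram comparing $\iota_*$ with $r_*$, where the arrow $\Hr^1\GG^\rr\to\Hr^1\GG^\red$ is a bijection because $r|_{\GG^\rr}\colon\GG^\rr\isoto\GG^\red$ is an isomorphism of $\R$-groups (as noted in Subsection \ref{ss:non-red-decomp}), and $r_*$ is a bijection by Sansuc's lemma. Your extra check that the Lie-algebra splitting yields the group isomorphism is a harmless elaboration of what the paper already records there.
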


\begin{proof}
We have a commutative diagram
\[
\xymatrix{
\Hr^1\GG^\rr\ar[r] \ar[d]_-\sim     &\Hr^1\GG\ar[d]^-\sim \\
\Hr^1\GG^\red\ar@{=}[r]       &\Hr^1\GG^\red
}
\]
in which the left-hand vertical arrow is bijective because
it is induced by an isomorphism of $\R$-groups,
and the right-hand vertical arrow is bijective by Sansuc's lemma.
Thus the top horizontal arrow is bijective, as required.
\end{proof}

\begin{subsec}
Let $\GG$ be a connected  $\R$-group, not necessarily reductive.
Using computer, we can construct a basis of a Lie subalgebra
$\gR^\rr\subset\gR$ as above.
Then using the methods of Section \ref{s:connected},
we can construct a set of representatives  $g_i$ of $\Hr^1\GG^\rr$;
they are contained in $\Zr^1\GG$.
By Lemma \ref{l:LL} this set is a set of representatives
of $\Hr^1\GG$, which solves Problem \ref{mp:1}
for a connected $\R$-group $\GG$.
\end{subsec}

\begin{subsec}\label{sub:pb02nonred}
We consider Problem \ref{mp:2} for our
(not necessarily reductive) connected $\R$-group $\GG$.

We consider the projection
\[d\pi^\rr\colon\gR\to\gR^\rr,\quad x=x^\uu+x^\rr\,\mapsto\, x^\rr\ \ \text{for}\ \ x^\uu\in\gR^\uu,\, x^\rr\in\gR^\rr.\]
In Subsection \ref{sec:B5} we have given an algorithm to explicitly construct the
homomorphism
\[\pi^\rr\colon \GG\to\GG^\rr.\]

Let $g\in\Zr^1\GG$, and let
$g^\rr=\pi^\rr(g)\in\Zr^1 \GG^\rr$ denote the image of $g$ in $G^\rr$.

Since $\GG^\rr$ is a connected reductive group,
we can use the method of Section \ref{s:connected-equivalence}
in order to find $i$ and $s_r\in G^\rr$
such that
\begin{equation}\label{e:sr-gr}
 s_r^{-1}\cdot g^\rr\cdot\sigma(s_r)=g_i\in \Zr^1 \GG^\rr\subseteq \Zr^1\GG.
\end{equation}

Set
\[ u=s_r^{-1}\cdot g\cdot\sigma(s_r)\cdot g_i^{-1}.\]
It follows from \eqref{e:sr-gr} that $\pi^\rr(u)=1$, whence  $u\in G^\uu$.
Moreover, since  $u g_i\in \Zr^1\GG$,
an easy calculation shows that
\[u\cdot g_i\hs\sigma(u)\hs g_i^{-1}=1,\]
that is, $u\in \Zr^1\GG_i$ where
\[\GG_i=\hs_{g_i}\hm\GG=(G,\sigma_i),\quad\ \sigma_i=\inn(g_i)\circ\sigma.\]
Thus $u\in \Zr^1(G^\uu,\sigma_i)$, that is,  $u\cdot\sigma_i(u)=1$ and $\sigma_i(u)=u^{-1}$.

Set  $s_i=\exp(\tfrac12\log u )$. Then
\[ s_i^2=u,\quad\ \sigma_i(s_i)=s_i^{-1},\quad\  s_i^{-1}\cdot u\cdot\sigma_i(s_i)=1.\]

We set $s=s_r s_i$. We calculate:
\[s^{-1} g \sigma(s)= s_i^{-1}\big(s_r^{-1} g\hs \sigma(s_r) \big)\sigma(s_i)=s_i^{-1} u g_i \hs\sigma(s_i).\]
We have
\[g_i\hs \sigma(s_i)g_i^{-1}=s_i^{-1},\quad\text{whence}\quad g_i\hs\sigma(s_i)=s_i^{-1} g_i\hs.\]
We obtain
\[ s_i^{-1} u\hs g_i \hs\sigma(s_i)=s_i^{-1} u \hs s_i^{-1} g_i=g_i\hs.\]
Thus
\[s^{-1} g\hs \sigma(s)=g_i\hs.\]
We have found a cocycle $g_i$ from our list and an element $s\in G$ such that $s^{-1} g\hs \sigma(s)=g_i$\hs,
which solves Problem \ref{mp:2} for our connected $\R$-group $\GG$ that is not necessarily reductive.
\end{subsec}

\section{Computing abelian $\Hr^2$ for a quasi-torus}
\label{s:H2-quasi-torus}

Let $\CC$ be a quasi-torus over $\R$. For computational purposes we assume that
we know a basis of the Lie algebra of $\CC^0$, the finite group $\CC^\ff =
\CC/\CC^0$, and for each element of $\CC^\ff$ we know a preimage in $\CC$.
We wish to compute the second (abelian) Galois cohomology group $\Hr^2\hs\CC\coloneqq \Hr^2(\R,
\CC)$ in the following sense:

\begin{problem}\label{p:1}
We need a method (algorithm, computer program)
giving us a list
\[c_1,\dots, c_n\in\Zr^2\hs\CC\]
of 2-cocycles representing all cohomology classes
in $\Hr^2\hs\CC$ so that for each cohomology class we have exactly one cocycle.
\end{problem}

\begin{problem}\label{p:2}
Assume that we have solved Problem \ref{p:1} for $\CC$.
We need a computer program permitting us, for any 2-cocycle
$c\in \Zr^2\hs\CC$, to determine  $i$
such that $c\sim c_i$, and to find an element $s\in C$ such that
\[ c=s\cdot\hm\upgam\hm s\cdot c_i\hs.\]
\end{problem}

We also consider the following problem:

\begin{problem}\label{p:3}
We need a computer program permitting us, for any 2-cocycle
$c\in \Zr^2\hs\CC$, to determine whether it is a 2-coboundary, that is,
whether $c\in \Br^2\hs\CC$, and if yes, to find $s\in C$ such that
\begin{equation}\label{e:c-s-g-s}
c= s\cdot\hm\upgam\hm s.
\end{equation}
\end{problem}

Note that Problem \ref{p:2} can be reduced to Problem \ref{p:3}.
Indeed, for all $i$ with $1\le i\le n$, let us compute $cc_i^{-1}\in \Zr^2\hs\CC$
and, using our solution of Problem \ref{p:3},
determine $i$ for which $cc_i^{-1}$ is a 2-coboundary
and find $s\in C$ such that $cc_i^{-1}= s\cdot\hm\upgam\hm s$.
Then $c\sim c_i$ and $c=s\cdot\hm\upgam\hm s\cdot c_i$.

\begin{subsec}
If $\CC$ is a {\em finite} abelian $\R$-group,
then Problems \ref{p:1} and \ref{p:3} can be solved by brute force,
using the definitions in Subsection \ref{ss:H1-abelian}; see Remark \ref{r:brute-force}.
\end{subsec}

\begin{subsec}
Let $\CC$ be an $\R$-quasi-torus.
We embed $\CC$ into an $\R$-torus $\TT$ and set $\TT'=\TT/\CC$.
We have an isomorphism \eqref{e:ATT'}
\[ \H^1\big(\X_*(\TT)\to\X_*(\TT')\big)\labelto{\sim}
    \Hr^2\hs\CC,\ \  [\nu,\nu']\mapsto [c]
    \ \, \text{for}\  (\nu,\nu')\in \Zr^1\big(\X_*(\TT)\to\X_*(\TT')\big), \]
where $t\in T$ is a preimage of $\nu'(-1)$ and
\[c=\nu(-1)\cdot t\cdot\!\upgam t;\]
see  \eqref{e:nu-nu'-a}.
We can compute $\H^1\big(\X_*(\TT)\to\X_*(\TT')\big)$ using computer.
This solves Problem \ref{p:1}.
\end{subsec}

\begin{subsec}\label{ss:C-is-torus}
We consider Problem \ref{p:3} for the case when $\CC$ is a {\em torus}.
In \ref{tor:3} it is shown how to write $\CC$ as product of standard tori
$\FF_i$, $\EE_i$ and $\DD_i$. Accordingly we can write the element $c\in \Zr^2\hs\CC$
as a product of elements $\phi_i(s_i)$, $\epsilon_i(t_i)$, $\delta_i(u_i,v_i)$,
with $s_i,t_i,u_i,v_i\in \C^\times$. Then $\phi_i(s_i)\in \Zr^2\hs \FF_i$,
$\epsilon(t_i)\in \Zr^2\hs \EE_i$, $\delta_i(u_i,v_i)\in \Zr^2\hs \DD_i$.
We solve the problem for each subtorus in the following way:
\begin{itemize}
\item We have $\Zr^2\FF_i = \Br^2 \FF_i = \{ \phi_i(z) | |z| = 1 \}$.
  We find $\hat s_i \in \C^\times$ with $\hat s_i^2 = s_i$ and $|s_i|=1$. Then
  $\phi_i(\hat s_i) \upgam \phi_i(\hat s_i ) = \phi_i(s_i)$.
\item We have $\Zr^2 \EE_i = \{ \epsilon_i(z) \mid z \in \R^\times \}$ and
  $\Br^2 \EE_i = \{ \epsilon_i(z) \mid z\in \R^\times, z > 0\}$.
  So $\epsilon(t_i)\in \Br^2 E_i$ if and only if $t_i\in \R$, $t_i>0$.
  In that case we set $\hat t_i = \sqrt{t_i}$ and $\epsilon_i(t_i) =
  \epsilon_i(\hat t_i)\upgam\epsilon_i(\hat t_i)$. If $t_i <0$ then
  $\epsilon_i(t_i)\not\in \Br^2 E_i$ and hence $c\not\in \Br^2 \CC$.
\item We have $\Zr^2 \DD_i = \Br^2 \DD_i = \{ \delta_i(z,\bar{z})\, |\,  z \in \C^\times\}$.
  So $v_i = \bar{u_i}$. Then $\delta_i(u_i,\bar{u_i}) = \delta(u_i,1)\, \upgam
  \delta_i(u_i,1)$.
\end{itemize}

We form the product of the constructed elements and find $s\in C$ such that
$c = s \upgam s$.

\end{subsec}

\begin{subsec}
We consider Problem \ref{p:3} in the general case,
 when $\CC$ is an arbitrary $\R$-quasi-torus.
We have a short exact sequence
\[1\to \CC^0\to\CC\to\CC^\ff\to 1,\]
where the identity component  $\CC^0$ is an $\R$-torus,
and the component group $\CC^\ff\coloneqq \CC/\CC^0$ is a finite abelian $\R$-group.

Let $c\in\Zr^2\hs\CC$. We choose representatives $r_1,\dots, r_m\in C$ of $C^\ff=C/C^0$.
For each $i=1,\dots,m$ we compute
\[z^{(i)}=c\cdot (r_i\upgam r_i)^{-1}\]
and determine whether $z^{(i)}\in C^0$.
If yes, then $z^{(i)}\in \Zr^2\CC^0$, and using the method of Subsection \ref{ss:C-is-torus},
we determine whether there exists an element $s_0^{(i)}\in C^0$ such that
\begin{equation}\label{e:z^i}
z^{(i)}= s_0^{(i)}\cdot\upgam s_0^{(i)}.
\end{equation}
If yes, then we set
\[s=r_i\cdot s_0^{(i)}\in C,\]
and we obtain that $c=s\hs\upgam\hm  s$, that is, $s$ satisfies  \eqref{e:c-s-g-s}.
If for all $i=1,\dots,m$ we have either $z^{(i)}\notin C^0$, or $z^{(i)}\in C^0$
but there is no $s_0^{(i)}\in C^0$ satisfying \eqref{e:z^i},
then we conclude that there is no $s\in C$ satisfying \eqref{e:c-s-g-s}.
This solves Problem \ref{p:3}.
\end{subsec}

\section{Neutralizing a 2-cocycle of a connected reductive group}
\label{s:neutralizing}

\begin{subsec}
Let $H$ be a connected reductive $\C$-group.
Let $(h,f)\in\Zr^2 H$ be a 2-cocycle, that is,
\[h\in H,\ \ f\in\SAut_\anti(H),\quad f^2=\inn(h),\ \ f(h)=h;\]
in particular, we assume that $f$ is anti-regular (= anti-holomorphic).
We wish to find a {\em neutralizing element for $(h,f)$},
that is, an element $d\in H$ such that
\[d\cdot f(d)\cdot h=1,\]
or to prove that there is no neutralizing element.
\end{subsec}

\begin{definition}
A {\em pinning of $H$} is a tuple $\big(T,B,\{X_\alpha\}_{\alpha\in S}\big)$,
where $T\subseteq H$ is a maximal torus in $H$,
$B\subseteq H$ is a Borel subgroup containing $T$,
$S=S(H,T,B)$ is the system of simple roots corresponding to $(H,T,B)$,
and $X_\alpha\in {\mathfrak h}_\alpha$ is a nonzero root
vector in the one-dimensional root subspace
${\mathfrak h}_\alpha\subset {\mathfrak h}$
corresponding to a simple root $\alpha$;
see \cite[Definition 1.5.4]{Conrad}.
\end{definition}

\begin{subsec}\label{neut:3}
Consider the pinning $f(T,B,\{X_\alpha\}_{\alpha\in S}\big)$ of $H$.
It is well known that there exists an element $a\in H$ such that
\[a\cdot f(T,B,\{X_\alpha\}_{\alpha\in S}\big)\cdot a^{-1}= (T,B,\{X_\alpha\}_{\alpha\in S}\big).\]
Set
\[(h',f')=a*(h,f),\quad\ \text{that is,}\quad\ h'=a\cdot f(a)\cdot h,\ \  f'=\inn(a)\circ f.\]
Then we know that $(h',f')$ is a nonabelian 2-cocycle:
\[ f^{\prime\,2}=\inn(h'),\quad f'(h')=h'.\]
We have
\[f'(T,B,\{X_\alpha\}_{\alpha\in S}\big)=(T,B,\{X_\alpha\}_{\alpha\in S}\big),\]
whence
\[ \inn(h')(T,B,\{X_\alpha\}_{\alpha\in S})=f'\big(f'(T,B,\{X_\alpha\}_{\alpha\in S})\big)=(T,B,\{X_\alpha\}_{\alpha\in S}).\]
Since the only inner automorphism preserving the pinning
$(T,B,\{X_\alpha\}_{\alpha\in S}\big)$ is the identity automorphism,
we see that $\inn(h')=\id_H$, $f^{\prime\,2}=\id_H$, $h'\in C:=Z(H)$.

The anti-regular involution $f'$ of $H$ defines
real forms $\HH=(H,f')$ of $H$ and $\CC=(C,f'|_C)$ of $C$,
and we have $h'\in \CC(\R)=\Zr^2\CC$.
We consider the (abelian) cohomology class $[h']\in\Hr^2\CC$.

Let $\HH^\ssc$ denote the universal cover
of the commutator subgroup $[\HH,\HH]$ of $\HH$ (which is simply connected);
see  \cite[Proposition (2:24)(ii)]{BT72}
or  \cite[Corollary A.4.11]{CGP}.
We write $\CC^\ssc=Z(\HH^\ssc)$.
We have a canonical homomorphisms
\begin{align*}
&\rho\colon\HH^\ssc\onto[\HH,\HH]\into\HH,\\
&\rho_\cCC\colon\CC^\ssc\to\CC,
\end{align*}
and the induced homomorphism
\[\rho_{\cCC*}\hs\colon\,\Hr^2\CC^\ssc\to\Hr^2\CC.\]
\end{subsec}

\begin{theorem}
The second cohomology class $[h,f]=[h',f']\in \Hr^2\hs H$ is neutral if and only if
\begin{align}\label{a:rhoC}
[h']\hs\in\hs\im [\rho_{\cCC*}\colon\Hr^2\CC^\ssc\to\Hr^2\CC].
\end{align}
\end{theorem}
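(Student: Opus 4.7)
The plan is to reduce (as the exposition has already done) to the pinning-normalized situation where $f'$ is an honest anti-holomorphic involution and $h'\in C=Z(H)$, and then to read off neutrality from a diagram chase involving the two central extensions
\[
1\to\CC\to\HH\to\HH^\ad\to 1,\qquad 1\to\CC^\ssc\to\HH^\ssc\to\HH^\ad\to 1,
\]
which are linked by the canonical isogeny $\rho\colon\HH^\ssc\to[\HH,\HH]\subseteq\HH$ and induce a commutative square of connecting maps
\[
\xymatrix{
\Hr^1\hs\HH^\ad \ar[r]^-{\delta^1_{C^\ssc}} \ar@{=}[d] & \Hr^2\hs\CC^\ssc \ar[d]^-{\rho_{\cCC*}}\\
\Hr^1\hs\HH^\ad \ar[r]^-{\delta^1_C} & \Hr^2\hs\CC.
}
\]

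First I would handle the forward direction. Suppose $(h',f')$ is neutral, so there is $d\in H$ with $d\cdot f'(d)\cdot h'=1$. Let $\pi\colon H\to H^\ad$ be the quotient and set $\bar d=\pi(d)$. Since $h'\in C$ projects trivially to $H^\ad$, the equation $d\hs f'(d)=h'^{-1}$ gives $\bar d\cdot f'(\bar d)=1$, i.e.\ $\bar d\in\Zr^1\hs\HH^\ad$. By the explicit formula for $\delta^1_C$ recalled in Section \ref{s:Nonab-abstract}, one has $\delta^1_C[\bar d]=[d\hs f'(d)]=[h'^{-1}]$ in $\Hr^2\hs\CC$, and $[h'^{-1}]=[h']$ by Lemma \ref{l:2-xi}. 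Commutativity of the square then yields $[h']=\delta^1_C[\bar d]=\rho_{\cCC*}\big(\delta^1_{C^\ssc}[\bar d]\big)\in\im\rho_{\cCC*}$.

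For the converse, assume $[h']=\rho_{\cCC*}[z]$ with $[z]\in\Hr^2\hs\CC^\ssc$. The key input I would invoke is Douai's theorem asserting that every class in $\Hr^2(\R,\HH^\ssc)$ is neutral (since $\HH^\ssc$ is simply connected semisimple over $\R$). Combined with exactness of the nonabelian cohomology sequence associated with $1\to\CC^\ssc\to\HH^\ssc\to\HH^\ad\to 1$ at the term $\Hr^2\hs\CC^\ssc$, this forces $\delta^1_{C^\ssc}\colon\Hr^1\HH^\ad\to\Hr^2\CC^\ssc$ to be surjective. So we pick $[\bar d]\in\Hr^1\HH^\ad$ with $\delta^1_{C^\ssc}[\bar d]=[z]$, whence $\delta^1_C[\bar d]=[h']$ by the square. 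Lifting $\bar d$ to $d\in H$, we get $d\hs f'(d)\in C$ with $[d\hs f'(d)]=[h']$ in $\Hr^2\CC$, so $d\hs f'(d)=h'\cdot c\hs f'(c)$ for some $c\in C$. Replacing $d$ by $c^{-1}d$ (which is legal as $c$ is central) makes $d\hs f'(d)=h'$. Since $h'\in C$ is central, the identity $d\hs f'(d)=h'$ forces $d$ and $f'(d)$ to commute; therefore $d^{-1}\cdot f'(d^{-1})=(d\hs f'(d))^{-1}=h'^{-1}$, i.e.\ $d^{-1}\cdot f'(d^{-1})\cdot h'=1$. Thus $d^{-1}$ neutralizes $(h',f')$.

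The main obstacle is the backward direction, specifically establishing that $\delta^1_{C^\ssc}$ is surjective; this rests on the nonabelian Douai-style statement that $\Hr^2(\R,\HH^\ssc)$ contains only the neutral class for simply connected semisimple $\R$-groups, and I would cite \cite{Borovoi-Duke} for the precise form needed. The rest of the argument is a careful but routine verification that the normalization in \ref{neut:3} genuinely places $h'$ in $\CC(\R)=\Zr^2\CC$, that the two coboundary maps agree under $\rho_{\cCC*}$, and that the sign ambiguity $[h']$ versus $[h'^{-1}]$ is absorbed by Lemma \ref{l:2-xi}.
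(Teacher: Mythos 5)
Your proposal is correct in substance, but it is worth noting that the paper itself gives no argument here: its ``proof'' is just the citation of \cite[Theorem 5.5]{Borovoi-Duke}, of which the displayed statement is the specialization to $\R$ and to the pinning-normalized situation of Subsection \ref{neut:3}. What you have written is essentially a reconstruction of the mechanism behind that cited theorem: the forward direction is a direct connecting-map computation (project $d\hs f'(d)\hs h'=1$ to $\HH/\CC$, read off $\delta^1_C[\bar d]=[h'^{-1}]=[h']$, and use the commuting square), and the backward direction rests on Douai's theorem that every class in $\Hr^2(\R,\HH^\ssc)$ with the band of the real form $\HH^\ssc$ is neutral, which forces $\delta^1_{C^\ssc}$ to be surjective; there is no circularity, since that simply connected statement is a prior result (Douai's thesis, reproduced in Section 5 of \cite{Borovoi-Duke}) and not Theorem 5.5 itself. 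Two points deserve explicit verification that you only assert: (i) the identification $\HH^\ssc/\CC^\ssc\cong\HH/\CC$ compatibly with $\rho$ and with the real structures, which is what makes your square of connecting maps commute (check that $\rho$ carries $Z(H^\ssc)$ onto $Z(H)\cap[H,H]$ and that the composite $H^\ssc\to H\to H/Z(H)$ is the canonical projection, so that one may compute both coboundaries with the same lift); and (ii) the ``exactness at $\Hr^2\hs\CC^\ssc$'' is not recorded in the paper (Proposition \ref{p:Delta} is exactness at $\Hr^1$), but it is a two-line check with the paper's definitions: if $s\cdot\sigma^\ssc(s)\cdot z=1$ in $H^\ssc$, then the image $\bar s$ lies in $\Zr^1\hs(\HH^\ssc/\CC^\ssc)$ and $\delta^1_{C^\ssc}[\bar s]=[z^{-1}]=[z]$ by Lemma \ref{l:2-xi}. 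The trade-off between the two routes: the citation covers the general statement of \cite{Borovoi-Duke} (arbitrary local or number field, bands not necessarily represented by a form), whereas your argument is self-contained modulo the simply connected input and is legitimate precisely because the normalization of Subsection \ref{neut:3} makes $f'$ an honest anti-regular involution and $h'$ central, i.e.\ the band is represented by a real form --- which is exactly the special situation in which this simplified diagram chase, and the final central-twisting step producing the neutralizing element $d^{-1}$, go through.
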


\begin{proof}
See \cite[Theorem 5.5]{Borovoi-Duke}.
\end{proof}

\begin{subsec}
We can compute $\Hr^2\CC$ and $\Hr^2\CC^\ssc$
and to check whether \eqref{a:rhoC} is satisfied
using methods of Section \ref{s:H2-quasi-torus}.
If \eqref{a:rhoC} is satisfied for $h'$,
then it is also satisfied for $(h')^{-1}$,
and using methods of Section \ref{s:H2-quasi-torus}
we find  $c\in C$ and  $z^\ssc\in\Zr^2\CC^\ssc$ such that
\[c\cdot\hm\upgam c\cdot\rho(z^\ssc)\cdot h'=1.\]
We explain how to construct neutralizing elements
for the nonabelian 2-cocycles $(h,f)$ and $(h',f')$
using these $c$ and $z^\ssc$.

Let $\TT^\ssc\subset\HH^\ssc$ be a fundamental torus, that is,
a maximal torus containing a maximal compact torus.
By \cite[Proof of Lemma 10.4]{Kottwitz86},
the group of $\R$-points $\TT^\ssc(\R)$
is isomorphic to the direct product of groups isomorphic to $U(1)$ and $\C^\times$.
It follows that any element of $\TT^\ssc(\R)$
is the square of another element of this group.
In particular, we can write $z^\ssc=(t^\ssc)^2$, where $t^\ssc\in\TT^\ssc(\R)$.

Set $b=c\cdot\rho(t^\ssc)\in H$.
We calculate:
\[b\cdot f'(b)\cdot h'=b\cdot\hm\upgam b\cdot h'
    =c\cdot\hm\upgam c\cdot\rho(t^\ssc)^2\cdot h'
    =c\cdot\hm\upgam c\cdot\rho(z^\ssc)\cdot h'=1.\]
Thus $b*(h',f')\in \Nr^2 H$.
Since $(h',f')=a*(h,f)$, we conclude that $(ba)*(h,f)\in \Nr^2 H$.
Thus $b$ is a neutralizing element for the 2-cocycle $(h',f')$,
and $ba$ is a neutralizing element for the 2-cocycle $(h,f)$.
\end{subsec}

\begin{subsec}\label{neut:7}
Here we comment on the implementation of the steps outlined above. The input to
our algorithm is a basis of the complex
Lie algebra $\hg = \Lie H$ and the differential of the automorphism $f$,
which by abuse of notation we also denote by $f$.
This is an anti-regular  semi-automorphism of $\hg$.

First we comment on how to find the element $a$ in \ref{neut:3}. Let $\tl$ be
a Cartan subalgebra of $\hg$.
Then $\tl=\Lie T$, where $T$ is a maximal torus in $H$.
Consider the root decomposition
\[ \hg=\tl\oplus\bigoplus_{\alpha\in \Phi}\hg_\alpha\hs,\]
and let $S\subset \Phi$ be a system of simple roots.
We choose a {\em pinning} of $(\g,\hg)$:
a family of nonzero root vectors $X_\alpha\in \hg_\alpha$ for $\alpha\in S$.
We say that the root vectors $X_\alpha$ are {\em simple root vectors}.
Then $\tl'=f(\tl)$ is a Cartan subalgebra of $\hg$ and the
$X_\alpha'=f(X_\alpha)$ are simple root vectors with respect to $\tl'$. We
construct a (regular) automorphism $g$ of $\hg$ such that $g(\tl') = \tl$ and
$g(X_\alpha') = X_\alpha$. We check using the algorithm of \cite{cmt} whether
$g$ is an inner automorphism. If it is not, we compose $g$ with an outer
automorphism that fixes $\tl$ and permutes
the simple root vectors $X_\alpha$ and such that the
result is inner. We denote this composite automorphism again by $g$.
Then using the algorithm of \cite{cmt}, we write $g$ as a product of
$\exp( \ad X )$ where $X\in \hg$ is nilpotent. We let $a$ be the same product,
where instead of $\exp( \ad X )$ we write $\exp(X)$. Then $g(Y) = aYa^{-1}$ for
$Y\in \hg$. Hence $af(\tl)a^{-1} = \tl$ and $\{ af(X_\alpha)a^{-1} \} =
\{ X_\alpha \}$. It follows that the automorphism $Y\mapsto af(Y)a^{-1}$ maps
the Borel subalgebra spanned by $\tl$ along with all positive root vectors to itself.
Hence the element $a$ has the required properties.

Now we construct the derived subgroup $H'=[H,H]$ of $H$.
The universal cover $H^\ssc$ of  $H'$
 has Lie algebra $\Lie H^\ssc=\Lie H'=\hg'\coloneqq [\hg,\hg]$.
It is known how to find the highest weights
of a representation $r : \hg' \to \gl(V)$
such that the corresponding representation of $H^\ssc$ with differential $r$ is faithful
(see, for instance, \cite[Chapter 3, Theorem 2.18]{GOV}).
The representation $r$
can then be explicitly constructed using the algorithm of \cite{wdgrep}.
Namely, we use a fixed Cartan subalgebra and corresponding root system of
$r(\hg')$. For a root $\alpha$ in this root system we fix a root vector
$X_\alpha\in r(\hg')$. We construct these root vectors  $X_\alpha$ in such a way that
they are part of a Chevalley basis of the semisimple Lie algebra $r(\h')$; see \cite[\S 25]{hum}.
We identify $H^\ssc$ with the subgroup in $\GL(V)$
generated by the elements $\exp( t X_\alpha )$.
Then the surjective map
$\rho : H^\ssc \to [H,H]$ is induced by mapping
$\exp( t X_\alpha ) \mapsto \exp( t\cdot r^{-1} (X_\alpha) )$.

We explain how to construct the elements of the center of $H^\ssc$.
We fix a set of simple roots $\alpha_1,\ldots,\alpha_\ell$ of $r(\hg')$.
For a simple root $\alpha$
and $t\in \C$ we set $x_\alpha(t) = \exp( t X_\alpha )$. Furthermore, for $t\in
\C^\times$ we use the following standard notation
$$w_\alpha(t) = x_\alpha(t)x_{-\alpha}(-t^{-1})x_\alpha(t) \text{ and }
h_\alpha(t) = w_\alpha(t) w_\alpha(1)^{-1}.$$
Then the center of $H^\ssc$ consists of all products
$$\prod_{j=1}^\ell h_{\alpha_j}(t_j) \text{ such that } \prod_{j=1}^\ell t_j^{\langle
\alpha_i, \alpha_j^\vee\rangle} =1 \text{ for }1\leq i\leq \ell,$$
(see \cite[Chapter 3, Lemma 28]{steinberg}).
We can thus find the elements of the center of $H^\ssc$ by computing
the Hermite normal form $A$ of the Cartan matrix $(\langle \alpha_i,\alpha_j^\vee
\rangle)_{i,j}$. Then the equations $\prod_{j=1}^\ell t_j^{\langle
\alpha_i, \alpha_j^\vee\rangle} =1$ are equivalent to the equations
$\prod_{j=1}^\ell t_j^{A(i,j)} =1$. It is straightforward to solve the latter as
$A$ is upper triangular.

We can then compute by brute force $\Hr^2 (\R, \CC^\ssc)$  as well as the
image of $\rho_{\cCC,*} : \Hr^2 (\R, \CC^\ssc)\to \Hr^2 (\R, \CC)$. Then with
the methods of Section \ref{s:H2-quasi-torus} we either find $c\in C$ and
$z^\ssc\in
\Zr^2(\R, \CC^\ssc)$ such that $c\cdot \upgam c \cdot \rho(z^\ssc) \cdot h'=1$,
or conclude that no such elements exist. In the latter case there is no
neutralizing element for $(h,f)$. In the former case we continue.

We can compute the Lie algebra $\tgR^\ssc$ of a fundamental torus  $\TT^\ssc$
of $\HH^\ssc$. Then with the methods of Subsection \ref{tor:1} we can compute
an isomorphism $\lambda : (\C^\times)^m \to T^\ssc$ and we can find
$u_i\in \C^\times$ such that $z^\ssc = \lambda(u_1,\ldots,u_m)$. Then we
set $t^\ssc = \lambda(\sqrt{u_1},\ldots,\sqrt{u_m})$ and find $t^\ssc \in
\TT^\ssc(\R)$ such that $(t^\ssc)^2 = z^\ssc$.
Then $d = c\cdot \rho(t^\ssc)\cdot a$ is a neutralizing element for $(h,f)$.
\end{subsec}

\section{Neutralizing a 2-cocycle of a connected non-reductive group}
\label{s:neutralizing-nonred}

Let $H$ be a connected $\C$-group, not necessarily reductive.
Let $H^\uu$ denote the unipotent radical of $H$.

\begin{subsec}
Let $(h,f)\in \Zr^2 H$ be a 2-cocycle.
This means that $h\in H$, $f\in \SAut_\anti (H)$, $f(h)=h$, and $f^2=\inn (h)$.
We wish to neutralize $(h,f)$, that is, to find $d\in H$ such that $d*(h,f)=(1,f')$
for some $f'\in\SAut_\anti(H)$.
Recall that
\[d*(h,f)=(d\cdot f(d)\cdot h,\, \inn(d)\circ f).\]
Thus we wish to find $d\in H$ such that $d\cdot f(d)\cdot h=1$.
\end{subsec}

\begin{subsec}\label{sub:2coc}
Write $\h=\Lie H$, $\h^\uu=\Lie H^\uu$.
As in Section \ref{app:B}, we construct a decomposition
\begin{equation}\label{e:h-dotplus}
\h=\h^\uu\dotplus\h^\rr
\end{equation}
where $\h^\rr=\Lie H^\rr$ for some connected reductive subgroup $H^\rr\subseteq H$ such that $H=H^\uu\rtimes H^\rr$.

Consider the differential $df\colon\h\to \h$ and the Lie subalgebra $df(\h^\rr)\subseteq \h$.
Then $df(\h^\uu)=\h^\uu$ and we have a decomposition
\[\h=\h^\uu\dotplus df(\h^\rr).\]
As in Subsection \ref{sec:B3} we can find an element $s\in H^\uu$ such that
\[\Ad(s)\big(df(\h^\rr)\big)=\h^\rr,\]
in the language of matrices:
\[s\cdot df(\h^\rr)\cdot s^{-1} =\h^\rr.\]
We set $(h_1,f_1)=s*(h,f)\in \Zr^2\hs H$, that is,
\[  h_1=s\cdot f(s)\cdot h,\qquad f_1=\inn(s)\circ f.\]
Then $df_1(\h^\rr)=\h^\rr$.

The decomposition \eqref{e:h-dotplus} defines a projection map $\h\to \h^\rr$
and a homomorphism
\[\pi^\rr\colon H\to H^\rr,\]
which we can compute explicitly using the algorithm given in Subsection
\ref{sec:B5}. We set
\[h_1^\rr=\pi^\rr(h_1)\in H^\rr,\qquad f_1^\rr=f_1|_{H^\rr}\hs.\]
Then $(h_1^\rr,f_1^\rr)\in \Zr^2\hs H^\rr$.
Indeed, we have
\[\pi^\rr\circ f_1=f_1^\rr\circ\pi^\rr,\]
whence
\begin{gather*}
f_1^\rr(h_1^\rr)=f_1^\rr(\pi^\rr(h_1))=\pi^\rr(f_1(h_1))=\pi^\rr(h_1)=h_1^\rr,\\
(f_1^\rr)^2=\big(f_1|_{H^\rr}\big)^2=f_1^2|_{H^\rr}=\inn(h_1)|_{H^\rr}=\inn(\pi^\rr(h_1))=\inn(h_1^\rr).
\end{gather*}
\end{subsec}

\begin{subsec}
As in Section \ref{s:neutralizing}, we determine
whether the class $[h_1^\rr,f_1^\rr]$  of the 2-cocycle $(h_1^\rr,f_1^\rr)\in \Zr^2\hs H^\rr$
of the connected reductive group $H^\rr$ is neutral.
Indeed, suppose for the sake of contradiction that the cohomology class $[h,f]$ is neutral.
Thus $[h_1,f_1]=s*(h,f)$ is neutral, and therefore there exists $d_1\in H$ such that $d_1*(h_1,f_1)$ is a neutral cocycle,
that is, \[d_1\cdot f_1(d_1)\cdot h_1=1.\]
Set $d_1^\rr=\pi^\rr(d_1)$. Then
 \[d_1^\rr\cdot f_1^\rr(d_1^\rr)\cdot h_1^\rr=\pi^\rr\big(d_1\cdot f_1(d_1)\cdot h_1\big)=\pi^\rr(1)=1,\]
that is, $d_1^\rr*(h_1^\rr,f_1^\rr)$ is a neutral cocycle,
whence the class $[h_1^\rr,f_1^\rr]$ is neutral, which is a contradiction.

Now assume that the class $\big[h_1^\rr,f_1^\rr\big]$ is neutral, that is, there is an element $d_1^\rr\in H^\rr\subseteq H$
such that $d_1^\rr *\big(h_1^\rr,f_1^\rr\big)\in\Zr^2\hs H^\rr$ is a neutral cocycle.
We describe an algorithm of  neutralizing $(h,f)$.

Set
\[\big(h_2^\rr, f_2^\rr\big)=d_1^\rr *\big(h_1^\rr,f_1^\rr\big)\in \Zr^2\hs H^\rr;\]
then $1=h_2^\rr=d_1^\rr\cdot f_1^\rr\big(d_1^\rr\big)\cdot h_1^\rr$.
Set
\[(h_2, f_2)=d_1^\rr *(h_1,f_1)\in \Zr^2\hs H;\]
then $\pi^\rr(h_2)=h_2^\rr=1$, whence $h_2\in H^\uu$.
We set
\[h_2^\uu=h_2\hs,\qquad f_2^\uu=f_2|_{H^\uu}\hs;\]
then $(h_2^\uu,f_2^\uu)\in\Zr^2\hs H^\uu$.
\end{subsec}

\begin{subsec}
Since $H^\uu$ is a unipotent group,
by Douai's theorem \cite[Theorem IV.1.3]{Douai},
see also \cite[Corollary 4.2]{Borovoi-Duke},
any cocycle in $\Zr^2\hs H^\uu$ can be neutralized.
The proofs in \cite{Douai} and \cite{Borovoi-Duke}
over a field of characteristic 0 proceed by induction on $\dim H^\uu$.
Here we neutralize the cocycle  $(h_2^\uu,f_2^\uu)$ over $\R$ in one step.

Since  $H^\uu$ is unipotent, there exist mutually inverse regular maps
\[\exp\colon \h^\uu\to  H^\uu,\qquad \log\colon H^\uu\to\h^\uu.\]
Set
\[d_2^\uu=\exp(-\tfrac12 \log h_2^\uu)\in H^\uu\subset H.\]
Then
\[f_2^\uu(d_2^\uu)=d^\uu,\qquad (d_2^\uu)^2=(h_2^\uu)^{-1}.\]
It follows that
\[ d_2^\uu\cdot f_2^\uu(d_2^\uu)\cdot h_2^\uu=1,\]
that is, $d_2^\uu$ neutralizes the 2-cocycle $(h_2^\uu,f_2^\uu)\in \Zr^2\hs H^\uu,$
and hence the 2-cocycle  $d_2^\uu *(h_2, f_2)\in \Zr^2\hs H$ is neutral
(recall that $h_2^\uu=h_2$ and $f_2^\uu=f_2|_{H^\uu}$).

Since
\[(h_2,f_2)=d_1^\rr * (h_1,f_1)\quad\ \text{and}\quad\ (h_1,f_1)=s*(h,f),\]
we see that the 2-cocycle
\[ (d_2^\uu\cdot d_1^\rr\cdot s)*(h,f)\in \Zr^2\hs H\]
is neutral, and the element  $d\coloneqq d_2^\uu\, d_1^\rr\hm s$
neutralizes our original 2-cocycle $(h,f)$.
\end{subsec}

\section{$\Hr^1$ for non-connected $\R$-groups}
\label{s:non-connected}

In this section we reduce Problems \ref{mp:1}
and \ref{mp:2} for a not necessary connected  $\R$-group
to the case of a connected  $\R$-group
treated in Sections \ref{s:connected}, \ref{s:connected-equivalence},  and \ref{s:non-reductive}.

\begin{subsec}
Let $\GG$ be a linear  $\R$-group, not necessarily connected or reductive.
We wish to compute $\Hr^1\hs \GG:=\Hr^1(\R,\GG)$.
We write $\GG=(G, \sigma)$, where $G=\GG(\C)$.
Then $\Hr^1\hs\GG=\Hr^1\big(\Gamma,(G,\sigma)\big)$.

We write $\GG^0$ for the identity component of $G$, and set $\GG^\ff=\GG/\GG^0$.
Then $\GG^\ff$ is a finite $\Ga$-group.
We have a short exact sequence
\[1\to\GG^0\labelto{\iota}\GG\labelto{\pi}\GG^\ff\to1.\]

We wish to compute $\Hr^1\GG$, that is, to write a set
$\Xg\subset \Zr^1\hs\GG$ of representatives
of all cohomology classes in $\Hr^1\hs\GG$.

We start with computing $\Hr^1\hs\GG^\ff$. Since the group $\GG^\ff$ us finite,
we can find a list of representatives
$g_1^\ff=1,g_2^\ff,\,\dots,\,g_k^\ff\in \Zr^1\hs\GG^\ff$
of all cohomology classes in $\Hr^1\hs\GG^\ff$ by brute force;
see Remark \ref{r:brute-force}.
Now in order to compute $\Hr^1\GG$ it suffices to compute $\pi_*^{-1}\xi^\ff$
for each cohomology class $\xi^\ff=[g^\ff]\in \Hr^1\GG^\ff$.
\end{subsec}

\begin{subsec}\label{ss:H1-ker}
We consider the cohomology class $[1]\in \Hr^1\hs\GG^\ff$
and compute its preimage  $\pi_*^{-1}[1]=\ker\pi_*\subseteq\Hr^1\hs\GG$.
By Proposition \ref{p:Delta} the short exact sequence
\[1\to \GG^0\labelto{\iota}\GG\labelto\pi\GG^\ff\to 1\]
induces a cohomology exact sequence
\[\GG^\ff(\R)\labelto\delta \Hr^1\hs\GG^0\labelto{\iota_*}\Hr^1\hs\GG\labelto{\pi_*}
     \Hr^1\hs\GG^\ff\labelto\Delta\Hr^2\hs\GG^0.\]
Thus  $\ker\pi_*\subseteq\Hr^1\hs\GG$ coincides with $\im\iota_*$.
Moreover, the fibers of the map
\[\iota_*\colon \Hr^1\hs\GG^0\to\Hr^1\hs\GG \]
are the orbits of $\GG^\ff(\R)$ in $\Hr^1\hs\GG^0$;
see Serre \cite[Section I.5.5, Proposition 39(ii)]{Serre}.

Consider the anti-regular involution
\[\sigma\colon G\to G,\quad g\mapsto\upgam g.\]
We consider also the induced anti-regular involutions
\[\sigma^0\colon G^0\to G^0\quad\text{and}\quad \sigma^\ff\colon G^\ff\to G^\ff.\]
For simplicity, we write $\sigma$ for $\sigma^0$ and $\sigma^\ff$.

We describe the right action of $\GG^\ff(\R)= (G^\ff)^\sigma$ on $\Hr^1\hs\GG^0$.
Let $b^\ff\in(G^\ff)^\sigma$. We lift $b^\ff$ to some element $b\in G$.
Let $g\in\Zr^1\hs\GG$. Then
\begin{equation}\label{e:G-sig-action}
 [g]*b^\ff=\big[b^{-1} g\,\sigma(b)\big].
\end{equation}

Using computer as explained in Sections \ref{s:connected} and \ref{s:non-reductive},
we solve Problem \ref{mp:1} for $G^0$,
that is, find a  set of 1-cocycles
\[X_1\subset\Zr^1\hs\GG^0\]
representing all cohomology classes.
The group $G^\ff$ is finite, and we can
find the subgroup $(G^\ff)^\sigma$ by brute force.
For any $x\in X_1$ and any  $b^\ff\in(G^\ff)^\sigma$ we construct a 1-cocycle
\[b^{-1} x\,\sigma(b)\in \Zr^1\hs\GG^0\]
as in \eqref{e:G-sig-action}, representing $[x]*b^\ff$.
Using computer as explained in Sections \ref{s:connected} and \ref{s:non-reductive},
we find elements $x'\in X_1$ and $b_0\in G^0$ such that
\[b_0^{-1}b^{-1} x\,\sigma(b)\sigma(b_0)= x'.\]
Thus we obtain an action of $(G^\ff)^\sigma$ on  $X_1$.
We choose a subset $X^{\num}_1\subseteq X_1$
of representatives of orbits for this action.
Then this subset $X^{\num}_1\subseteq X_1\subset\Zr^1\hs\GG^0$
is a set of representatives of all cohomology classes in\, $\ker\pi_*=\pi_*^{-1}[1]$.
\end{subsec}

\begin{subsec}\label{ss:b-x0}
Now let $g\in \Zr^1\hs\GG^0\subseteq\Zr^1\hs\GG$.
Then $[g]\in\ker\big[\Hr^1\hs\GG\to\Hr^1\hs\GG^\ff\big]$.
We wish to find $b\in G$ and $x^{\num}\in X_1^{\num}$ such that
\begin{equation*}
b^{-1}\cdot g\cdot \sigma(b)=x^{\num},
\end{equation*}
Since the set $X_1\subset \Zr^1\hs\GG^0$ represents
all cohomology classes in $\GG^0$,
we can, using computer as explained in Section \ref{s:connected-equivalence},
find $b_0\in G^0$ such that
\[b_0^{-1}\cdot g\cdot \sigma(b_0)=x\quad\text{for some}\ \, x\in X_1.\]
Since the subset $X_1^{\num}$ represents all orbits of $(G^\ff)^\sigma$,
there exists $b'\in G$ such that
\[(b')^{-1}\cdot x\cdot\sigma(b')=x^{\num}\quad
     \text{where}\ \, x^{\num}\in X_1^{\num}\hs.\]
\end{subsec}

\begin{subsec}\label{ss:gi-gi-ff}
Let $\xi^\ff\in \Hr^1\hs\GG^\ff$.
Write  $\xi^\ff=[g_i^\ff]$ where $g_i^\ff\in \Zr^1\GG^\ff$.
Write $g_i^\ff=\pi(g_i)$ where $g_i\in G$.
As in Subsection \ref{ss:Delta}, we construct $\Delta(g_i)=(a,f)\in \Zr^2\hs G^0$,
where $a=g_i\hm\upgam\hm g_i,\ f=\inn(g_i)\circ\gamma$.
We have $\Delta[\xi^\ff]=[a,f]\in \Hr^2\hs G^0$.
By Proposition \ref{p:Delta}, $\pi_*^{-1} [\xi^\ff]\neq\emptyset$
if and only if $\Delta[g^\ff]\in \Nr^2\hs G^0$.
In Sections \ref{s:neutralizing} and \ref{s:neutralizing-nonred}
we gave a method  to determine whether
$(a,f)=\Delta[x^\ff]$ is contained in $\Nr^2\hs G^0$.
If $(a,f)\notin \Nr^2\hs G^0$, then we conclude that  $\xi^\ff$ cannot be lifted to $\Hr^1\GG$.
If $(a,f)\in \Nr^2\hs G^0$, then Sections \ref{s:neutralizing} and \ref{s:neutralizing-nonred}
give a method to find $s\in G^0$ such that $s\cdot f(s)\cdot a=1$.
For such $s$ we set $g_i'=sg_i$. By Corollary \ref{c:lifting-1-cocycle},
$g_i'\in \Zr^1\GG$ and $j(g_i')=g_i^\ff$.
For simplicity, we write $g_i$ for $g_i'$.
We have lifted $g_i^\ff\in \Zr^1\hs\GG^\ff$ to some $g_i\in \Zr^1\GG$,
and thus we have lifted $\xi^\ff=[g_i^\ff]\in \Hr^1\hs\GG^\ff$ to some $\xi=[g_i]\in \Hr^1\GG$.
\end{subsec}

\begin{subsec}
Recall that  $\GG=(G,\sigma)$ where $\sigma(g)=\upgam g$.
Then $\sigma\in\SAut_\anti(G)$ and $\sigma^2=1$.
For those $i$ for which we have lifted $g_i^\ff\in\Zr^1\GG^\ff$ to $g_i\in\Zr^1\GG$,
we consider $\sigmai=\inn(g_i)\circ\sigma$.
Then $\sigmai\in\SAut_\anti(G)$.
Since $g_i\in \Zr^1(\Gamma,G)$, we have $(\sigmai)^2=1$.
Thus $\sigmai$ is a real structure on $G$.
We consider the real form $\GG_i=\hs_{\gi}\hm\GG=(G,\sigmai)$,
its identity component $\GG^0_i=\hs_{\gi}\hm\GG^0$,
and its component group $\GG_i^\ff=\GG_i/\GG_i^0$.
We have a short exact sequence
\[1\to\GG_i^0\labelto{ }\GG_i\labelto{\pi_i}\GG_i^\ff\to 1\]
and a cohomology exact sequence
\[\GG_i^\ff(\R)\labelto{ } \Hr^1\hs\GG_i^0\labelto{ }
       \Hr^1\hs\GG_i\labelto{\pi_{i*}}\Hr^1\hs\GG_i^\ff.\]
As in Subsection \ref{ss:H1-ker},
we can use computer in order to find a set of 1-cocycles
$X^{\num}_i\subset \Zr^1\hs\GG_i$ containing 1,
representing all cohomology classes in $\ker \pi_{i*}$\hs.

Consider the map
\[ G\to G,\quad g\mapsto g\hs g_i\ \ \text{for}\ g\in G.\]
This map restricts to a bijection
\[ t_i\colon \Zr^1\hs\GG_i\to \Zr^1\hs\GG\]
and induces a bijection
\[\tau_i\colon \Hr^1\hs\GG_i\to \Hr^1\hs\GG;\]
see Serre \cite[Section I.5.3, Proposition 35 bis]{Serre}.
Moreover, this map $\tau_i$ restricts to a bijection
\[\ker\pi_{i*}\labelto\sim\pi_*^{-1}[g_i^\ff];\]
see Serre \cite[Section I.5.5, Corollary 2 of Proposition 39]{Serre}.
Thus the set $X^{\num}_i\cdot g_i$ is a set
of representatives of all cohomology classes
in $\pi_*^{-1}[g_i^\ff]$, and the union
\begin{equation}\label{e:Xg}
\Xg\coloneqq\bigcup_i X^{\num}_i g_i\,\subseteq\, \Zr^1\hs\GG
\end{equation}
is a set of representatives of all cohomology classes in $ \Hr^1\hs\GG$.
Thus we can solve Problem \ref{mp:1}
for a not necessarily connected and not necessarily reductive $\R$-group $\GG$.
\end{subsec}

\begin{subsec}
Here we summarize the algorithm for computing $\Hr^1 \GG$ where
$\GG=(G,\sigma)$ is
not necessarily connected. The input to our algorithm consists of the following:
\begin{itemize}
\item a  basis of the real  Lie algebra  $\Lie \GG^0$,
\item a multiplication table as well as a table giving the action of complex conjugation on
  the finite group $G^\ff = G/G^0$,
\item for each $c^\ff\in G^\ff$ a preimage $g\in G$.
\end{itemize}
The algorithm starts by computing $\Hr^1 \GG^\ff$ (by brute force; see Remark \ref{r:brute-force}). We set
$H_1 = \emptyset$, and for each $[c^\ff]\in \Hr^1 \GG^\ff$ we do the following:
\begin{enumerate}
\item Let $g\in G$ be a preimage of $c^\ff$ and set $h=g\upgam g$, $f = \inn(g) \circ \sigma$.
\item Use the algorithm of Section \ref{s:neutralizing} to decide
  whether $[h,f] \in \Nr^2\hs \GG^0$. If not then discard $[c^\ff]$, if yes
  then find $s\in G^0$ with $s\cdot f(s)\cdot h=1$.
\item Set $\hat g = sg$, $\hat \sigma = \inn(\hat g) \circ \sigma$, and consider the twisted group
  $_{\hat g}\GG= (G,\hat\sigma)$. Compute $\Hr^1{} _{\hat g}\GG^0$ and
  $(\hs_{\hat g}\GG^\ff)(\R)=(G^\ff)^{\hat\sigma}$.
\item Find a set $X_1^\num$ of orbit representatives relative to the action of $(\hs_{\hat g}\GG^\ff)(\R)$
   on $\Hr^1{} _{\hat g}\GG^0$. Replace
  $H_1$ by  $H_1 \cup X_1^\num\hat g$.
\end{enumerate}
We remark that if $\GG^0$ is a torus, then step (2) becomes easier. In this case
we consider the twisted group $_g\GG = (G,\tilde\sigma)$
where $\tilde\sigma=\inn(g)\circ\sigma$. Then $c^\ff$ lifts to a
cocycle in $\GG$ if and only if $h\in \Br^2{} _g\GG^0$. Indeed,
there is $b\in G^0$ with $(bg) \cdot \sigma(bg) = 1$ if and only if
$b\hs g\hs \sigma(b) g^{-1} g\hs \sigma(g) = 1$, which is the same as $b\hs \tilde\sigma(b)
h=1$, which in turn is equivalent to $h\in \Br^2{} _g\GG^0$. Note that with
the algorithm of Subsection \ref{ss:C-is-torus} we can decide whether
$h\in \Br^2 {} _g\GG^0$, and in the affirmative case find an element  $c\in
G^0$ with $h=c \hs\tilde\sigma(c)$. Then with $b=c^{-1}$ we have that $\hat g
=bg$ is a cocycle in $\GG$, and in Step (3) we work with this $\hat g$.
\end{subsec}

\begin{subsec}
We consider Problem \ref{mp:2}
for our not necessarily connected $\R$-group $\GG$.
We have a set $\Xg\subset \Zr^1\GG\subset G$ of 1-cocycles
representing all cohomology classes in $\Hr^1\GG$.
By \eqref{e:Xg} we have $\Xg=\bigcup_i X^{\num}_i g_i$
where  $X^{\num}_i g_i$ is a set
of representatives of all cohomology classes
in $\pi_*^{-1}[g_i^\ff]\subseteq\Hr^1\GG$.

Let $c\in\Zr^1\hs\GG\subset G$ be a 1-cocycle, and let $c^\ff\in\Zr^1\hs\GG^\ff$
denote its image in $G^\ff$.
Since $\Xg$ is the set of representatives of {\em all} classes in $\Hr^1\GG$,
we know that there exists an element $s\in G$ such that $s^{-1} c\hs \upgam s\in \Xg$
(but we do not know this element $s$).
It follows that there exists an element $s^\ff\in G^\ff$ such that
\begin{equation}\label{e:sf-gf}
(s^\ff)^{-1}\cdot g^\ff\cdot\upgam s^\ff=g_i^\ff.
\end{equation}
for some 1-cocycle $g_i^\ff\in \Zr^1\GG^\ff$
that can be lifted to $\Zr^1\GG$.
Since $G^\ff$ is a finite group, we can find such $s^\ff$
and $g_i^\ff$   by brute force.

Let $g_i\in\Zr^1\hs\GG$ be the lift of $g_i^\ff$
constructed in Subsection \ref{ss:gi-gi-ff}.
Let $s\in G$ be such that $s^\ff=s\cdot G^0$.
Set
\begin{equation}\label{e:h}
h= s^{-1}\cdot g\cdot\upgam s\cdot g_i^{-1}.
\end{equation}
It follows from \eqref{e:sf-gf} that $h\in G^0$.
Moreover, since $hg_i\in \Zr^1\hs\GG$, we see that
$h\in \Zr^1\hs \GG_i$ where $\GG_i={}_{g_i}\GG=(G,\sigma_i)$
with $\sigma_i=\inn(g_i)\circ\sigma$.
Thus $h\in\Zr^1\hs \GG_i^0$.
Then using the method of Subsection \ref{ss:b-x0},
we can find $b\in G$ such that
\begin{equation}\label{e:x-num}
b^{-1}\cdot h\cdot\sigmai(b)=x_i^{\num}\quad\text{where}\ \, x_i^{\num}\in X^{\num}_i.
\end{equation}
It follows from \eqref{e:h} and \eqref{e:x-num} that
\[x_i^{\num}=b^{-1}\cdot s^{-1} g\upgam\hm s \hs g_i^{-1}\cdot g_i\upgam b\hs g_i^{-1}
        =(sb)^{-1} g\,\sigma(sb)\cdot g_i^{-1},\]
whence
\[(sb)^{-1} g\,\sigma(sb)=x_i^{\num} g_i\in  X_i^{\num}g_i\hs.\]
We see that we can find  elements $b'=sb\in G$ and
$x_i'=x_i^{\num}g_i\in X_i^\num g_i\subseteq\Xg$
such that
\[(b')^{-1}\cdot g\cdot\hm\upgam b'=x_i'.\]
Thus we can solve Problem \ref{mp:2}
for a not necessarily connected and  not necessarily reductive $\R$-group $\GG$.
\end{subsec}

\section{Implementation and practical experiences}\label{sec:gap}

We have implemented the algorithms in the computational algebra system
{\sf GAP}4, \cite{GAP4}. Especially useful was some functionality for
working with real semisimple Lie algebras in the package {\sf CoReLG}; see \cite{corelg}.

We need to say some words on the field of definition of our algebras and
matrices. First of all we want to stress that we need exact arithmetic.
But also the algorithms occasionally require to take $n$-th roots of field
elements (see Subsections \ref{equiv:1}, \ref{ss:C-is-torus}, \ref{neut:7})
and also the algorithm of \cite{cmt} needs to take $n$-th roots). However, {\sf GAP} does
not implement a field where such operations are possible. So we have
implemented our own field for this. We have taken the field {\tt SqrtField}
from the package {\sf CoReLG} and transformed it into a dynamic field. That is,
every time we need an $n$-th root that is not contained in the field, the
latter is extended. For the number theoretic computations that are necessary
(e.g., factorization of polynomials over number fields) we use the system
{\sf SageMath}, \cite{sage}, which is called from {\sf GAP}.

Because the field operations of the field {\tt SqrtField} are implemented
in the {\sf GAP} language, and are not supported by the {\sf GAP} kernel,
using this field inevitably slows the computations down. If the group is
connected, then for computing the first Galois cohomology set it is not
necessary to take $n$-th roots, and  so it is possible to use a field with
{\sf GAP} kernel support such as $\Q(i)$.
We show in Table \ref{table:1} the difference that  it makes.
 The example inputs that we use are the groups
$\SO(p,q)$, which are defined as follows:
$$\SO(p,q) = \{ g\in \GL(n,\C) \mid g^T I_{p,q} g = I_{p,q}, \det(g)=1\},
    \text{ where } I_{p,q}=\begin{pmatrix} I_p & 0 \\ 0 & -I_q\end{pmatrix}$$
with Lie algebra
$$\mathfrak{so}(p,q) = \{ X\in M_n(\C) \mid X^TI_{p,q} +I_{p,q} X = 0\}.$$
A real basis of the latter is the input to our algorithms.

The sixth column of Table \ref{table:1}
has the total running time when using the {\sf GAP}
field {\tt CF(4)} (which is $\Q(i)$). The seventh column has the running time
when using the field {\tt SqrtField}. We see that the difference is rather
big.

A few more things can be remarked. The fifth column has the time needed for
initialization (when using $\Q(i)$). In the initialization phase a
multiplication table of the Lie algebra is computed, as well as a Cartan
decomposition. We see that the initialization takes a large part of the time
for the lower ranks, whereas it is not so important for the higher ranks.
This is explained by the fact that we compute $\Hr^1 T$, where $T$ is the
centralizer of $T_0$, on which we take the orbits of $W_0$ (see Section
\ref{s:connected}). Now the size of  $\Hr^1 T$ is $2^{\dim T_0}$, so for the
larger ranks this becomes the dominant factor.

We also note that the order of the Weyl group does not play a very
important role. For $\SO(13,17)$ we have $|W|=21424936845312000$ and
$|W_0|=475634073600$. A generating set of $W_0$, consisting of 17 elements, is
found in 142 seconds using the method outlined in Remark \ref{rem:realweyl}
(in this case the real Weyl group turns out to be equal to $W_0$ so there were
45045 coset representatives to check in order to establish this).
The action of these 17 elements on the $2^{14}$ elements of $\Hr^1 T_0$ is
computed in 218 seconds. Finally, a set of orbit representatives is computed
in 14 milliseconds.

\begin{table}[htb]
\begin{tabular}{|l|r|r|r|r|r|r|}
  \hline
  $G$ & $\dim G$ & $\dim T_0$ & $|\Hr^1 \GG|$ & \multicolumn{3}{|c|}{Running time (s)}\\
  \hline
   &  &  & & \multicolumn{2}{|c|}{$\Q(i)$} &  {\sf SqrtField} \\
  \hline
   & & & & init & total & \\
  \hline
  $\SO(6,9)$ & 105 & 7 & 8 & 3.9 & 5.2 & 140 \\
  $\SO(7,11)$ & 153 & 8 & 9 & 10.9 & 14.8 & 388 \\
  $\SO(7,13)$ & 190 & 9 & 10 & 19.7 & 27.3 & 1162\\
  $\SO(13,17)$ & 435 & 14 & 15 & 218 & 669 &  \\
  \hline
\end{tabular}
\caption{Running times (in seconds) of the algorithm to
         compute $\Hr^1\hs \SO(p,q)$.}\label{table:1}
\end{table}

\begin{remark}
We can check by hand the validity of the cardinalities $\#\Hr^1 \GG$ for $\GG=\SO(p,q)$ in Table 1.
Assume that at least one of the numbers $p,q$ are odd. Then $\Hr^1 \GG$
classifies symmetric matrices of quadratic forms in $p+q$ variables with determinant $-1$;
see Serre \cite[Section III.1.1]{Serre}.
By Sylvester's law of inertia, see, for instance, \cite[Corollary XV.4.2]{Lang},
such a matrix is equivalent to a diagonal matrix
of the form $\diag(-1,\dots,-1,+1,\dots,+1)$ with $r$ times $-1$,
and this number  $r$ is uniquely determined and  odd.
Thus $|\Hr^1 \GG|$ equals the number of odd natural numbers between 1 and $p+q$,
which is clearly equal to $\big\lceil\frac{p+q}{2}\big\rceil$.
For the groups $\SO(p,q)$ in Table 1 we obtain
\[
\big\lceil\tfrac{6+9}{2}\big\rceil=8, \quad \big\lceil\tfrac{7+11}{2}\big\rceil=9,\quad
    \big\lceil\tfrac{7+13}{2}\big\rceil=10,\quad \big\lceil\tfrac{13+17}{2}\big\rceil=15.
\]
\end{remark}

\newcommand{\DDD}{{\sf D}}
\newcommand{\EEE}{{\sf E}}

For a set of examples of non-connected groups we consider the groups
$\Aut \gR$, where $\gR$ is the split real form
of the simple Lie algebra of type $\DDD_n$ for $n=4,5,6$ or $\EEE_6$.
Here the component group has order 2, except for $\DDD_4$, in which case the
component group has order 6. For non-connected groups we can only use the
field {\tt SqrtField}. The running times for $n=4,5,6$ are displayed in
Table \ref{table:2}. We see that the running times rise sharply with the
dimension. This is mainly caused by the inefficiency of the field operations
in our field of definition.
On the other hand, we also see that the program is able to
deal with groups of moderate rank and dimension.

\begin{table}[htb]
\begin{tabular}{|l|r|r|r|}
  \hline
  type & $\dim G$ & $|\Hr^1 \GG|$ & time (s) \\ \hline
  $\DDD_4$ & 28 & 5 & 1506\\
  $\DDD_5$ & 45 & 7 & 3187\\
  $\DDD_6$ & 66 & 8 & 21533\\
  $\EEE_6$ & 78 & 5 & 25821\\
  \hline
\end{tabular}
\caption{Running times (in seconds) of the algorithm to compute
  $\Hr^1 (\Aut \g)$.}\label{table:2}
\end{table}

\begin{remark}
We can check by hand the validity of the cardinalities $\#\Hr^1 \GG$ for $\GG=\Aut\gR$ in Table 2.
Here  $\Hr^1 \GG$ classifies  the real forms of the complex Lie algebra $\g=\gR_\C$;
see Serre \cite[Section III.1.1]{Serre}.

Classification of real forms of simple Lie algebras,
in particular, of exceptional simple Lie algebras,
goes back to \'Elie Cartan's paper \cite{Cartan}.
For a  modern classification see  Kac \cite{Kac69};
see also Helgason \cite[Table V in Section X.6]{Helgason},
and  Onishchik and Vinberg \cite[Tables 7 and 9]{OV}.

The real forms of the complex Lie algebra $\so(2n)$ of type $\DDD_n$ for $n>4$
are the real Lie algebras $\so(p,2n-p)$ for $0\le p\le n$ and the quaternionic form $\so^*(2n)$,
and we obtain together $n+2$ pairwise non-isomorphic  real forms.
The similar count holds for $n=4$ except for the fact that $\so^*(8)\simeq\so(2,6)$;
see, for instance, \cite{Helgason}, Section X.6.4, case (viii).
We obtain together $n+1=5$ pairwise non-isomorphic real forms.
Thus for $\DDD_4$, $\DDD_5$, $\DDD_6$ we obtain the numbers of real forms
(the cardinalities of $\Hr^1 \GG$) $5,7,8$, respectively.

For the complex Lie algebra $\eee_6$ of type $\EEE_6$, there are 4
pairwise non-isomorphic noncompact real forms; see, for instance, \cite[Table V]{Helgason}.
Together with the compact form, we obtain 5 pairwise non-isomorphic real forms of $\eee_6$,
that is, the cardinality of $\Hr^1 \GG$ for $\GG=\Aut(\eeeR_6)$ is 5.
\end{remark}

\appendix

\section{Anti-regular maps}
\label{app:A}

Here we discuss anti-regular maps of complex affine varieties
and anti-regular automorphisms of complex  algebraic groups.
For the notion of a semi-linear morphism of schemes over an arbitrary field
see \cite{Borovoi20}.
In this appendix  we write $Y(\C)$ (and not just $Y$)
for the set of $\C$-points of a $\C$-variety $Y$.

Let $X$ and $Y$ be affine varieties over $\C$. By an affine variety over $\C$ we mean
a reduced affine scheme of finite type over $\C$, not necessarily irreducible.
We may assume that our affine variety $Y$ is embedded as a Zariski-closed subset
into the affine space $\C^n$ for some natural $n$.
Then the {\em regular functions} on $Y(\C)$ are the restrictions of the polynomials on $\C^n$.

\begin{definition}\label{d:reg}
We say that a map on $\C$-points
$\varphi\colon X(\C)\to Y(\C)$ is {\em regular},
if it comes from a morphism of varieties $X\to Y$.
In other words, $\varphi$ is regular
if for any regular function $f\colon Y(\C)\to\C$,
the function $\varphi^*(f)\colon X(\C)\to \C$ given by
\[ (\varphi^*\hm f)(x)=f(\varphi(x)) \]
is regular.
\end{definition}

\begin{definition}\label{d:reg-anti-reg}
We say that $\varphi\colon X(\C)\to Y(\C)$ is {\em anti-regular},
if for any regular function $f\colon Y(\C)\to\C$,
the function  $\ov{\varphi^*\hm f}\colon X(\C)\to \C$ given by
\[ \big(\hs\ov{\varphi^*\hm f}\hs\big)(x)=\overline{f(\varphi(x))} \]
is regular, where the bar denotes  complex conjugation.
\end{definition}

\begin{lemma}[easy]
\label{l:reg-anti-reg}
\begin{enumerate}
\item[(i)] The composition of two regular maps,
and the composition of two anti-regular maps, are regular.
\item[(ii)] The composition of a regular map and an anti-regular map,
in any order, is anti-regular.
\end{enumerate}
\end{lemma}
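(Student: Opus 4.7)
The plan is to verify the lemma by unwinding Definitions \ref{d:reg} and \ref{d:reg-anti-reg} directly on pullbacks of regular functions, exploiting the fact that complex conjugation of scalars is an involution. For each composition $\psi\circ\varphi$, I will fix an arbitrary regular function $f$ on the target variety and check that the appropriate pullback --- either $f\circ\psi\circ\varphi$ (for regularity) or its complex conjugate (for anti-regularity) --- is a regular function on the source.

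For (i), the regular-regular case is just the standard functoriality of pullback: if $\psi^*f = f\circ\psi$ is regular on $Y(\C)$ and $\varphi$ pulls regular functions back to regular functions, then $\varphi^*\psi^*f = (\psi\circ\varphi)^*f$ is regular on $X(\C)$. For the anti-regular-anti-regular case with $\varphi\colon X(\C)\to Y(\C)$ and $\psi\colon Y(\C)\to Z(\C)$, set $g = \overline{f\circ\psi}$, which is regular on $Y(\C)$ by anti-regularity of $\psi$. Applying anti-regularity of $\varphi$ to $g$ shows that $\overline{g\circ\varphi}$ is regular on $X(\C)$. Because $\overline{\overline{h}} = h$ pointwise for any $\C$-valued function $h$, we have $\overline{g\circ\varphi} = f\circ\psi\circ\varphi$, so $\psi\circ\varphi$ is regular.

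For (ii), in the subcase $\varphi$ anti-regular, $\psi$ regular, the function $\psi^*f = f\circ\psi$ is regular on $Y(\C)$, and then anti-regularity of $\varphi$ applied to this regular function gives that $\overline{(f\circ\psi)\circ\varphi} = \overline{f\circ(\psi\circ\varphi)}$ is regular on $X(\C)$, which is exactly the defining property of anti-regularity for $\psi\circ\varphi$. In the symmetric subcase $\varphi$ regular, $\psi$ anti-regular, the function $\overline{f\circ\psi}$ is regular on $Y(\C)$, and pulling it back by the regular map $\varphi$ yields $\overline{f\circ\psi}\circ\varphi = \overline{f\circ(\psi\circ\varphi)}$ regular on $X(\C)$, again as required.

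There is really no obstacle here, which is why the authors label the lemma ``easy''. The only point that requires care is bookkeeping with the conjugation bar: one must consistently interpret $\overline{h}$ as the function $x\mapsto\overline{h(x)}$, use the identity $\overline{h\circ\varphi} = \overline{h}\circ\varphi$ (since the bar acts on scalar outputs and commutes with precomposition), and invoke the involutivity $\overline{\overline{h}} = h$ at exactly one place in the anti-regular-anti-regular case.
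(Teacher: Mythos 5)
Your proof is correct, and it is exactly the definition-unwinding argument the authors intend by labeling the lemma ``easy'' (the paper itself supplies no proof). The key points — using $\overline{h\circ\varphi}=\overline{h}\circ\varphi$ and the involutivity of conjugation once in the anti-regular/anti-regular case — are handled properly, so there is nothing to add.
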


\begin{remark}\label{r:finite}
If the variety $X$ is finite (as a set), then all functions $X(\C)\to\C$ are regular,
and therefore any map $X(\C)\to Y(\C)$ is simultaneously regular and anti-regular.
\end{remark}

\begin{subsec}
Let $\YY$ be a {\em real} affine variety.
In the coordinate language, the reader may regard $\YY$
as an algebraic subset in $\C^n$ (for some positive integer $n$)
defined by polynomial equations with {\em real} coefficients.
More conceptually, the reader may assume that $\YY$
is a reduced affine scheme of finite type over $\R$.
With any of these two equivalent definitions,
$\YY$ defines a covariant functor
\[A\rightsquigarrow \YY(A)\]
from the category of commutative unital $\R$-algebras to the category of sets.
Applying this functor to the $\R$-algebra $\C$ and the morphism of $\R$-algebras
\[\gamma\colon \C\to \C,\quad z\mapsto\ov z\ \ \text{for}\ z\in\C,\]
we obtain a set (a complex analytic space) $\YY(\C)=Y(\C)$ together with a map
\begin{equation}\label{e:tau-X}
\sigma_\yY=\YY(\gamma)\hs\colon\hs Y(\C)\to Y(\C),
\end{equation}
where we write $Y=\YY\times_\R\C$, the base change of $\YY$ from $\R$ to $\C$.
By functoriality we have $\sigma_\yY^2=\id$,
and by Lemma \ref{l:anti} below the map $\sigma_\yY$ is anti-regular.
We say that $\sigma_\yY$ is an {\em anti-regular involution} of $Y$.
\end{subsec}

\begin{lemma}\label{l:anti}
For a real affine variety $\YY$,
the map \eqref{e:tau-X} defined above is anti-regular.
\end{lemma}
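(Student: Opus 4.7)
The plan is to reduce to the coordinate description of $\YY$ and then check anti-regularity by a direct polynomial calculation.

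First I would choose a presentation of $\YY$ as a Zariski-closed subscheme of affine space $\A^n_\R$ defined by an ideal $I \subset \R[x_1,\dots,x_n]$ generated by polynomials with \emph{real} coefficients. After base change, $Y = \YY \times_\R \C$ is cut out of $\A^n_\C$ by the same polynomials, viewed in $\C[x_1,\dots,x_n]$, and so $Y(\C) \subset \C^n$ is the common zero locus of these real polynomials. Under this identification, unraveling the definition $\sigma_\yY = \YY(\gamma)$ shows that $\sigma_\yY$ is nothing but the restriction to $Y(\C)$ of the standard componentwise complex conjugation map $c_n \colon \C^n \to \C^n$, $(z_1,\dots,z_n)\mapsto (\bar z_1,\dots,\bar z_n)$; indeed, the $\R$-algebra maps $\R[x_1,\dots,x_n]/I \to \C$ classified by $\YY(\C)$ are composed with $\gamma\colon\C\to\C$ on the target, which at the level of points is exactly $c_n$.

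Next, I would verify anti-regularity directly. A regular function $f \colon Y(\C) \to \C$ is, by definition, the restriction of some polynomial $p = \sum_I a_I x^I \in \C[x_1,\dots,x_n]$. For $y \in Y(\C)$ we compute
\[
\overline{(\sigma_\yY^*\hs f)(y)} \;=\; \overline{p(\sigma_\yY(y))} \;=\; \overline{\sum_I a_I \hs \bar y^{\,I}} \;=\; \sum_I \bar a_I\hs y^I \;=\; \bar p(y),
\]
where $\bar p$ denotes the polynomial obtained from $p$ by conjugating each coefficient. Since $\bar p \in \C[x_1,\dots,x_n]$, its restriction to $Y(\C)$ is regular, which is exactly what Definition~\ref{d:reg-anti-reg} requires.

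There is essentially no obstacle here: the only subtlety is to make sure that the scheme-theoretic map $\YY(\gamma)$ really does coincide on $\C$-points with componentwise conjugation, which is a routine check with the functor-of-points description; once that identification is made, the polynomial computation above closes the argument. (One could equivalently phrase the proof coordinate-freely: $\sigma_\yY$ corresponds at the level of rings to the $\gamma$-semilinear automorphism of $\calO(Y) = \calO(\YY)\otimes_\R \C$ acting as identity on $\calO(\YY)$ and as $\gamma$ on $\C$, and this formally implies that $\overline{\sigma_\yY^*\hm f}$ is regular whenever $f$ is.)
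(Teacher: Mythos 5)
Your proof is correct and follows essentially the same route as the paper: embed $\YY$ in real affine space, identify $\sigma_\yY$ with componentwise complex conjugation on $Y(\C)\subset\C^n$, and observe that $\overline{\sigma_\yY^*\hm f}$ is the restriction of the coefficient-conjugated polynomial, hence regular. The only difference is that you spell out the functor-of-points identification of $\sigma_\yY$ with conjugation, which the paper states without proof.
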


\begin{proof}
We may and will assume that $\YY$
is embedded into the real affine space $\A_\R^n$
for some positive integer $n$.
For a complex point $y\in Y(\C)\subset \C^n$
with coordinates $(y_1,\dots,y_n)$,
the point $\sigma_\yY(y)$ has coordinates $(\ov y_1,\dots,\ov y_n)$.
A regular function $f$ on $Y$ is  the restriction to $Y$ of a polynomial $P$
in the coordinates $y_i$ with certain coefficients $c_\alpha$.
An easy calculation shows that $\sigma_\yY^*f$ is the restriction to $Y$
of the complex conjugate polynomial $\upgam P$
(the polynomial with coefficients $\upgam c_\alpha$),
hence a regular function on $Y$.
It follows that the map $\sigma_\yY$ is anti-regular, as required.
\end{proof}

\begin{subsec}\label{ss:G-anti}
Let $\GG$ be a real algebraic group.
As above, it defines a complex algebraic group $G=\GG\times_\R\C$
and an anti-regular  group automorphism
\[\sigma_\gG=\GG(\gam)\hs\colon\hs G(\C)\to G(\C)\]
such that $\sigma_\gG^2=\id$;
see, for instance, \cite[Section 1.1]{BT21}.
We say that $\sigma_\gG$ is an {\em anti-regular involution} of $G$.
Thus from $\GG$ we obtain a pair $(G,\sigma_\gG)$.

Conversely, by Galois descent any pair $(G,\sigma)$, where $G$
is a {\em complex} algebraic group
and $\sigma\colon G(\C)\to G(\C)$ is an anti-regular involution of $G$,
comes from a unique (up to a canonical isomorphism)
{\em real} algebraic group $\GG$;
see Serre \cite[V.4.20,  Corollary 2 of Proposition 12]{Serre-AGCF},
or the book ``N\'eron models'' \cite[Section 6.2, Example B]{BLR},
or Jahnel \cite[Theorem 2.2]{Jahnel}.
We do not use this fact.
For us, a real algebraic group is a pair $(G,\sigma)$ as above,
and we write $\GG=(G,\sigma)$.
We say that the real algebraic group $(G,\sigma)$
is a {\em real form} of the complex algebraic group $G$.

Note that if $G$ is reductive or unipotent,
then any anti-holomorphic involution of $G$ is anti-regular;
see Cornulier  \cite{Cornulier}.
The hypothesis that $G$ is either reductive or unipotent, is necessary:
the commutative algebraic group $\G_{{\rm a},\C}\times\GmC=\C\times\C^\times$
has the  anti-holomorphic involution
$(z,w)\mapsto \big(\ov{z},\exp(i\ov{z})\hs\ov{w}\big)$
that is not anti-regular.
\end{subsec}

\begin{subsec}\label{ss:SAut}
Let $G$ be a complex algebraic group.
By a {\em semi-automorphism} of $G(\C)$ we mean a regular or anti-regular automorphism.
We denote by $\SAut G$ the group of semi-automorphisms of $G(\C)$,
where the group law is defined by composition of maps.
In other words, $\SAut G$ is the union of the group of regular (usual) automorphisms $\Aut G$
and the set of anti-regular automorphisms $\SAut_\anti(G)$.
\end{subsec}

\end{document}